\documentclass{mcom-l}
\usepackage{amssymb,mathtools,mathrsfs,booktabs,graphicx}
\usepackage{array,multirow}
\usepackage{microtype}
\usepackage{placeins}
\usepackage{needspace}
\usepackage{float}
\usepackage{epstopdf}
\usepackage{aliascnt}
\usepackage{hyperref}
\usepackage[capitalize,nameinlink]{cleveref}
\newcommand{\R}{\mathbb R}
\newcommand{\C}{\mathbb C}
\newcommand{\Sph}{\mathbb S^{d-1}}
\newcommand{\eps}{\varepsilon}
\newcommand{\dd}{\,\mathrm d}
\newcommand{\ii}{\mathrm i}
\newcommand{\norm}[1]{\lVert#1\rVert}
\DeclareMathOperator{\supp}{supp}
\DeclareMathOperator{\sech}{sech}
\DeclareMathOperator{\spanop}{span}
\theoremstyle{plain}
\newtheorem{theorem}{Theorem}[section]
\newaliascnt{lemma}{theorem}
\newtheorem{lemma}[lemma]{Lemma}
\aliascntresetthe{lemma}
\newaliascnt{proposition}{theorem}
\newtheorem{proposition}[proposition]{Proposition}
\aliascntresetthe{proposition}
\newaliascnt{corollary}{theorem}

\aliascntresetthe{corollary}
\theoremstyle{definition}
\newaliascnt{definition}{theorem}

\aliascntresetthe{definition}
\theoremstyle{remark}
\newaliascnt{remark}{theorem}
\newtheorem{remark}[remark]{Remark}
\aliascntresetthe{remark}
\numberwithin{equation}{section}
\issueinfo{}{}{}{}
\PII{\mbox{}}
\copyrightinfo{}{}
\makeatletter
\AtBeginDocument{\let\@serieslogo\@empty}
\makeatother
\title[Deterministic Shallow Networks. Part I]{Fully Deterministic Shallow Neural Networks with Localized Activations. Part I: Construction and Optimal Sobolev Approximation}
\author{Ran Bi}
\address{School of Mathematics, Nanjing University, Nanjing 210093, People's Republic of China}
\email{ranbi@smail.nju.edu.cn}
\author{Weibing Deng}
\address{School of Mathematics, Nanjing University, Nanjing 210093, People's Republic of China}
\email{wbdeng@nju.edu.cn}
\subjclass[2020]{41A25, 41A46, 65D15, 65D32, 68T07}
\keywords{constructive approximation, shallow neural networks, localized activation functions, Sobolev approximation, spectral approximation}
\date{}
\hypersetup{colorlinks=true,allcolors=black,pdftitle={Fully Deterministic Shallow Neural Networks with Localized Activations. Part I: Construction and Optimal Sobolev Approximation},pdfauthor={Ran Bi and Weibing Deng}}
\begin{document}
\begin{abstract}
We construct shallow neural networks with fixed Gaussian and tanh-difference
activations on bounded Lipschitz domains $\Omega\subset\mathbb{R}^d$, $d\ge2$.
For each $s>0$, explicit deterministic rules prescribe all hidden parameters
independently of the target $u\in H^s(\Omega)$. At frequency resolution $M$, the
$N_M\asymp M^d$ prescribed features span a linear space $V_M$.
For every sufficiently large integer $N$, we choose $M\asymp N^{1/d}$
with $N_M\le N$ and construct a bounded linear operator
$A_N:H^s(\Omega)\to V_M$ such that
\[
 \|u-A_Nu\|_{H^m(\Omega)}
 \le C N^{-(s-m)/d}\|u\|_{H^s(\Omega)}
\]
for every $u\in H^s(\Omega)$ and every integer $0\le m<s$, with $C$
independent of $u$, $N$, and $m$. Sobolev width lower bounds show that
this rate is optimal among linear spaces of dimension at most $N$
chosen independently of the target. We use explicit spherical cubature
nodes for the directions and Gauss-Legendre nodes or equispaced midpoints
for the offsets.
We show that the exterior offset margin can shrink to zero as
$M\to\infty$ while preserving the approximation rate. A margin beyond
$\sup_{x\in\Omega}|x|$ of order $M^{-1}\log M$ for the Gaussian and
$M^{-1}(\log M)^2$ for the tanh difference suffices to truncate the
ridge integral and remove exterior midpoint nodes.
Numerical experiments in two
and three dimensions examine $L^2$ and $H^1$ approximation.
\end{abstract}
\maketitle
\section{Introduction}\label{sec:introduction}

In this paper, we construct shallow neural networks with Gaussian and tanh-difference
activations by explicit deterministic rules. The ridge directions,
offsets, and common scale are fixed independently of the target function.
The resulting feature spaces attain the optimal Sobolev approximation
order on bounded Lipschitz domains. The construction uses an exterior
offset margin that tends to zero with the resolution while preserving
this approximation order.

The construction is motivated by feature methods for partial differential
equations, in which residual least squares or variational formulations
determine the output coefficients. Examples include physics-informed
extreme learning machines \cite{DwivediSrinivasan2020}, local extreme
learning machines with domain decomposition \cite{DongLi2021}, and the
random feature method \cite{ChenChiEYang2022}, including applications to
interface problems \cite{ChiChenYang2024} and discontinuous Galerkin
formulations \cite{SunDongWang2024}. Such applications motivate reproducible feature spaces with quantitative
approximation estimates.

We use the fixed Gaussian $\sigma_G(t)=e^{-t^2}$ and tanh-difference
$\sigma_D(t)=\tanh(t+1)-\tanh(t-1)$ activations. Their decay localizes the
corresponding ridge features near hyperplanes. Let
$\Omega\subset\mathbb R^d$ be a nonempty bounded Lipschitz domain with
$d\ge2$, and fix $s>0$. For sufficiently large $N$, the prescribed
hidden parameters define a space with at most $N$ features. We construct
a linear operator $\mathcal A_N$ with values in this space such that every
$u\in H^s(\Omega)$ satisfies
\begin{equation}\label{intro:rate}
 \|u-\mathcal A_Nu\|_{H^m(\Omega)}
 \le C N^{-(s-m)/d}\|u\|_{H^s(\Omega)},
 \qquad m\in\mathbb N_0,\quad m<s.
\end{equation}
Classical Sobolev width bounds \cite{DahlkeNovakSickel2006II} establish
optimality among linear spaces fixed independently of the target.

Universal approximation results
\cite{Cybenko1989,LeshnoLinPinkusSchocken1993} characterize the activations
that generate dense network classes. Quantitative rates also depend on the function class, the error norm, and
the allowed dependence of the hidden parameters on the target. Barron \cite{Barron1993} obtained an $L^2$ error of order
$N^{-1/2}$ under a first absolute Fourier-moment condition. Subsequent analyses address probabilistic approximation \cite{Makovoz1996,KlusowskiBarron2018},
representation-based Barron spaces and regularized learning
\cite{EMaWu2022,EMaWu2019}, and spectral estimates for decaying
activations and higher-order ridge functions
\cite{SiegelXu2020,SiegelXu2022HighOrder}.

Sobolev approximation has been studied for networks with encodable
weights \cite{GuhringRaslan2021}, deep ReLU networks
\cite{LuShenYangZhang2021}, and tanh networks with two hidden layers
\cite{DeRyckLanthalerMishra2021}. For shallow networks, Siegel and Xu
\cite{SiegelXu2024Sharp} proved sharp variation-space bounds. Mao et~al.\
\cite{MaoSiegelXu2026Radon} obtained $L^p$ approximation rates for Sobolev
targets with target-dependent $\mathrm{ReLU}^k$ parameters. Pinkus
\cite{Pinkus1999} and DeVore et~al.\ \cite{DeVoreHaninPetrova2021}
survey the broader relation between neural networks and nonlinear
approximation.

For prescribed hidden parameters, Mhaskar \cite{Mhaskar1996} studied
constructive approximation with output coefficients defined by continuous
linear functionals under suitable assumptions on the activation. Petrushev \cite{Petrushev1998}
combined prescribed ridge directions with univariate approximation spaces.
More recently, Liu et~al.\ \cite{LiuMaoXu2025Relu} obtained optimal
Sobolev rates for prescribed $\mathrm{ReLU}^k$ parameters within specified
smoothness ranges.

Random-feature methods have also been analyzed in kernel quadrature \cite{Bach2017Quadrature},
structured PDEs \cite{Gonon2023}, and Banach-space-valued models
\cite{NeufeldSchmocker2023}. Recent work includes randomized tanh Sobolev
estimates \cite{DeRyckMishraShangWang2025}, random Fourier features with
leverage-score sampling \cite{MingYu2026Spectral}, and generalized spectral
Barron estimates for uniformly sampled localized features
\cite{BiDeng2026Adaptive}. Adaptive variants modify the sampling
distribution \cite{YangWang2026AdaptiveDistribution} or a partition of
unity \cite{BiDeng2026Adaptive}. These results concern feature families,
sampling laws, or target norms that differ from those considered here.

Recent constructions achieve optimal approximation orders with smooth
activations.
Li, Mao, and Xu
\cite{LiMaoXu2026Analytic}
prescribe quasi-Chebyshev hidden parameters. Their Cartesian Sobolev
estimates require a quantitative lower bound on every Taylor coefficient.
Their separate one-dimensional tanh construction gives multivariate
$L^2$ approximation through ridge lifting.
Fu and Wang \cite{fu2026optimal} obtain optimal Sobolev estimates for
smooth sigmoids with one approximant for all admissible derivative
orders. We follow their approach, which combines Fourier representation with
parameter quadrature. Their theorem uses a fixed enlarged offset interval
and, for $d\ge3$, selects spherical designs of degree $q$ with
$O(q^{d-1})$ nodes through an existence theorem. Here, explicit product
cubature and a shrinking offset interval yield fully prescribed localized
networks with the optimal approximation order.

Our main contributions are as follows:
\begin{enumerate}
\item \emph{Fully deterministic hidden parameters.}
Recursive product cubature \cite{DaiXu2013} gives
$J_M\asymp M^{d-1}$ explicit directions and positive weights. Together
with a common scale and Gauss-Legendre or equispaced midpoint offsets,
these directions specify all hidden weights and biases independently of
$u$, with $N_M\asymp M^d$ features. No quasi-uniformity assumption is
needed.

\item \emph{Optimal approximation for the prescribed networks.}\clubpenalty=10000\relax\space
For each activation and offset rule, a single linear operator with
values in the prescribed feature space attains \eqref{intro:rate}
simultaneously for all integers $0\le m<s$. For real $u$, the Fourier
formula defines real output coefficients that depend linearly on $u$ and
have a polynomial bound on their absolute sum. Frequency-dependent
spectral exclusions control division by the Fourier transform near the
real zeros of the tanh difference and yield a polynomially bounded inverse
multiplier for the fixed activation.

\item \emph{Explicit shrinking offset margins.}
For $\Omega\subset B_R$, the offset upper bound is $B_M=R+\Delta_M$.
The sufficient margins are $\Delta_M=a_G\log(M+2)/M$ for the Gaussian
and $\Delta_M=a_D\log^2(M+2)/M$ for the tanh difference. Explicit
conditions on $a_G$ and $a_D$ ensure that truncating the offset integral
and removing exterior midpoint nodes preserve \eqref{intro:rate} for all
integers $0\le m<s$.
\end{enumerate}

Two- and three-dimensional experiments use the Fourier benchmarks of
Fu and Wang \cite[Appendix~A]{fu2026optimal} to examine $L^2$ and
full $H^1$ approximation as the feature count and the target's spectral
decay vary. Part II studies the continuous Sobolev Gram matrices of the
midpoint dictionary.

\Cref{sec:construction} gives the construction and main theorem.
\Cref{sec:representation} derives the continuous ridge representation and
its truncation estimates. \Cref{sec:discretization} constructs the
quadrature rules and proves the approximation theorem. Numerical
experiments and concluding remarks appear in
\cref{sec:numerics,sec:conclusion}.

\section{Network construction and approximation theorem}\label{sec:construction}
\subsection{Notation and preliminaries}
Let $\Omega\subset\R^d$ be a nonempty bounded Lipschitz domain with
$d\ge2$. Define $R:=\sup_{x\in\Omega}|x|$, where $|x|$ denotes the
Euclidean norm. After a spatial rescaling if necessary, we may assume
$R\ge1$. We write $B_r:=B_r(0)$ for the open ball of radius $r$ centered
at the origin. Since $\Omega$ is open, $\Omega\subset B_R$.
Fix $s>0$ and set $m_*:=\lceil s\rceil-1$. For a multi-index $\alpha$,
$|\alpha|$ denotes its total order. The measure $\dd S$ is the
unnormalized spherical surface measure.

For integrable functions, we use the Fourier transform
$\widehat f(\xi)=\int_{\R^d}f(x)e^{-\ii x\cdot\xi}\dd x$, with inverse
normalization $(2\pi)^{-d}$ and the usual extension to $L^2(\R^d)$.
Write $\langle\xi\rangle=(1+|\xi|^2)^{1/2}$. The Bessel potential
space $H^s(\R^d)$ is equipped with the norm
\[
 \norm f_{H^s(\R^d)}^2=(2\pi)^{-d}
 \int_{\R^d}\langle\xi\rangle^{2s}|\widehat f(\xi)|^2\dd\xi.
\]
We define $H^s(\Omega)$ by restriction from $H^s(\R^d)$, with the
quotient norm at noninteger orders. For each integer $m\ge0$,
$H^m(\Omega)=W^{m,2}(\Omega)$ with equivalent norms. At these orders,
we use the full weak-derivative norm
\[
 \norm u_{H^m(\Omega)}^2
 =\sum_{|\alpha|\le m}\norm{D^\alpha u}_{L^2(\Omega)}^2.
\]
In particular, $H^0(\Omega)=L^2(\Omega)$ with equality of norms.

Fix a radius $R_0>R$. Rychkov's extension theorem
\cite{Rychkov1999}, followed by multiplication by a fixed smooth cutoff
supported in $B_{R_0}$ and equal to one near $\overline\Omega$, gives a
bounded linear extension operator $E:H^s(\Omega)\to H^s(\R^d)$ that
preserves real-valued functions. Every $u\in H^s(\Omega)$ then satisfies
\begin{equation}\label{eq:extension}
 \begin{gathered}
 f=Eu,\qquad f|_\Omega=u,\qquad \supp f\subset B_{R_0},\\[2pt]
 \norm f_{H^s(\R^d)}\le C_E\norm{u}_{H^s(\Omega)}.
 \end{gathered}
\end{equation}
The restriction identity holds almost everywhere on $\Omega$, and $E$
and $C_E$ are independent of $u$. Compact support also gives
$\norm f_{L^1(\R^d)}\le C\norm f_{L^2(\R^d)}
\le C\norm{u}_{H^s(\Omega)}$, so $\widehat f$ is also defined by the
Fourier integral above.

\subsection{Deterministic network construction}
We use one of the two fixed activations
\begin{equation}\label{eq:activations}
 \sigma_G(t)=e^{-t^2},\qquad
 \sigma_D(t)=\tanh(t+1)-\tanh(t-1).
\end{equation}
At resolution $M$, all features have a common positive scale $\eps_M$
and take the form
\[
 \sigma\!\left(\frac{\omega\cdot x-b}{\eps_M}\right),
 \qquad \omega\in\Sph,\quad b\ge0.
\]
The vector $\omega$ is the unit normal to the hyperplane
$\omega\cdot x=b$, and $b$ is its offset. The decay of $\sigma$
localizes each feature near this hyperplane, with width determined by
$\eps_M$. In the standard notation $\sigma(w\cdot x+\beta)$, the
hidden weight vector is $w=\omega/\eps_M$ and the additive bias is
$\beta=-b/\eps_M$.

We discretize the directions and restrict offsets to $(0,B_M)$.
Since $|\omega\cdot x|<R$ for $x\in\Omega$, we take $B_M=R+\Delta_M$
with a positive offset margin $\Delta_M$.
The margin controls the error from truncating the offset integral and,
for midpoint offsets, removing exterior nodes. To control feature
localization and coefficient growth, write $T_M=\log(M+2)$ and choose
\begin{equation}\label{eq:parameters}
 \renewcommand{\arraystretch}{1.2}
 \begin{array}{c@{\qquad}cc}
 \toprule
 &\sigma_G&\sigma_D\\
 \midrule
 \eps_M&\dfrac{\sqrt{T_M}}{M}&\dfrac{T_M}{M}\\[4pt]
 \Delta_M&\dfrac{a_GT_M}{M}&\dfrac{a_DT_M^2}{M}\\
 \bottomrule
 \end{array}.
\end{equation}
It suffices to fix positive margin constants satisfying
\begin{equation}\label{eq:margin}
 a_G^2\ge\max\{s-d/2,1\},\qquad
 2a_D>\max\{s-d/2,\pi+1/2\}.
\end{equation}

We use the recursive spherical cubature of degree $q_M\asymp M$
specified in \cref{sec:discretization}. It combines equispaced circle
nodes with successive Gauss-Jacobi nodes. We add the antipodal nodes and merge coincident directions. Denote the resulting
directions and positive weights by $\omega_\nu$ and $w_\nu$,
$1\le\nu\le J_M$, where $J_M\asymp M^{d-1}$.
For sufficiently large $M$, we combine these directions with either of the
following offset rules on $(0,B_M)$:
\begin{enumerate}
\item The Gauss-Legendre (GL) rule with $K_M=\lceil(q_M+1)/2\rceil$ points,
      nodes $b_j$, and positive weights $v_j$, $1\le j\le K_M$.
\item The midpoint rule with $K_M=\lfloor B_MM/\kappa_0\rfloor$,
      $h_M=B_M/K_M$, $b_j=(j+1/2)h_M$ for $0\le j<K_M$, and $v_j=h_M$.
\end{enumerate}
The positive constant $\kappa_0$ is fixed in \cref{sec:discretization}.
Both offset rules have $K_M\asymp M$ nodes. In the formulas below, we
index the midpoint nodes from $1$ to $K_M$.

For each activation and offset rule, define the linear space
\begin{equation}\label{eq:feature-space}
 V_M=\spanop\left\{
       \sigma\!\left(\frac{\omega_\nu\cdot x-b_j}{\eps_M}\right):
       1\le\nu\le J_M,\ 1\le j\le K_M\right\}.
\end{equation}
The span is taken over the scalar field of $H^s(\Omega)$.
The space $V_M$ has dimension at most $N_M=J_MK_M$. At each resolution
$M$, the features and all hidden parameters are fixed independently of
$u$ and the derivative order $m$.
We construct approximants in $V_M$ of the form
\begin{equation}\label{eq:network}
 v_M[u](x)=\sum_{\nu=1}^{J_M}\sum_{j=1}^{K_M}
     c_{\nu j,M}(u)
     \sigma\!\left(\frac{\omega_\nu\cdot x-b_j}{\eps_M}\right).
\end{equation}
The output coefficients are defined by bounded linear functionals
$c_{\nu j,M}$ on $H^s(\Omega)$ and include the quadrature weights
$w_\nu v_j$. These functionals are constructed explicitly from the ridge
representation in \cref{sec:representation} and the quadrature rules in
\cref{sec:discretization}.

\subsection{Approximation theorem}
To state the output coefficient bound, define
\begin{equation}\label{eq:P}
 P_G=\max\{1,d+3/2-s\},\qquad
 P_D=\max\{1,d+1-s+\pi\}.
\end{equation}
Here $P_\sigma=P_G$ for $\sigma=\sigma_G$ and
$P_\sigma=P_D$ for $\sigma=\sigma_D$.

\begin{theorem}\label{thm:main}
Let $V_M$ be constructed as in Section~2.2, with the quadrature
constants specified in \cref{sec:discretization}. For every sufficiently
large $M$, there is a bounded linear operator
$v_M:H^s(\Omega)\to V_M$ of the form \eqref{eq:network} such that
every $u\in H^s(\Omega)$ and every integer $0\le m\le m_*$ satisfy
\begin{equation}\label{eq:main-error}
 \norm{u-v_M[u]}_{H^m(\Omega)}
 \le C M^{m-s}\norm u_{H^s(\Omega)}
 \le C' N_M^{-(s-m)/d}\norm u_{H^s(\Omega)},
\end{equation}
and
\begin{equation}\label{eq:output-budget}
 \sum_{\nu=1}^{J_M}\sum_{j=1}^{K_M}|c_{\nu j,M}(u)|
 \le C M^{P_\sigma}\norm u_{H^s(\Omega)}.
\end{equation}
Since $N_M\asymp M^d$, for every sufficiently large integer $N$ one
can choose $M\asymp N^{1/d}$ with $N_M\le N$ such that
\[
 \norm{u-v_M[u]}_{H^m(\Omega)}
 \le C' N^{-(s-m)/d}\norm u_{H^s(\Omega)}
\]
for all such $u$ and $m$. The constants $C$ and $C'$ are independent
of $M$, $N$, $u$, and $m$.
\end{theorem}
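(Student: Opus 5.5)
The approach is the Fourier-representation-plus-parameter-quadrature strategy of Fu and Wang, made fully explicit. The argument has four stages: a continuous ridge representation, a low-pass cut, truncation of the offset integral, and quadrature in directions and offsets.

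\textbf{Step 1: continuous representation.} Extend $u$ to $f=Eu$ via \eqref{eq:extension}. Replace $f$ by a low-pass version $f_M$ whose Fourier transform is cut off smoothly at $|\xi|\lesssim M$. For the tanh difference, also remove the thin spectral shells near the real zeros of $\widehat\sigma_D(\eps_M r)$. Bessel-potential estimates give
\[
\norm{f-f_M}_{H^m(\R^d)}\le CM^{m-s}\norm u_{H^s(\Omega)}
\]
for all $m\le m_*$ at once. For the $\sigma_D$ exclusions, I would make the removed shells have relative width $\lesssim M^{-(s-m_*)}$, or compensate with a polynomially bounded correction, so that the same bound survives.

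Next, write $\widehat f_M(r\omega)$ in polar coordinates. Use the one-dimensional identity
\[
\int_\R \sigma\!\left(\frac{t-b}{\eps}\right)e^{\ii rb}\dd b=\eps\,\widehat\sigma(-\eps r)e^{\ii rt}.
\]
Dividing by $\widehat\sigma(\eps_Mr)$ then gives
\[
f_M(x)=\int_{\Sph}\int_\R g_M(\omega,b)\,\sigma\!\left(\frac{\omega\cdot x-b}{\eps_M}\right)\dd b\dd S(\omega),
\]
where $g_M(\omega,\cdot)$ is a band-limited function, namely the inverse one-dimensional transform of $r^{d-1}\widehat f_M(r\omega)/(\eps_M\widehat\sigma(\eps_Mr))$. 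Using antipodal symmetry, I fold the integral to $b\ge0$.

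The inverse multiplier must be polynomially bounded. For $\sigma_G$, $\widehat\sigma_G(\eps_Mr)^{-1}=\sqrt\pi^{-1}e^{\eps_M^2r^2/4}\le CM^{c}$ for $|r|\lesssim M$, because $\eps_M^2M^2=T_M$. For $\sigma_D$, $\widehat\sigma_D$ is an explicit $\sin/\sinh$ quotient, so away from the excluded shells it decays like $e^{-\pi\eps_Mr/2}\approx M^{-\pi/2\cdot c}$. This is the origin of the $\pi$ in $P_D$ and in \eqref{eq:margin}. These bounds give $\norm{g_M}_{L^1}\le CM^{P_\sigma}\norm u_{H^s}$, which will produce \eqref{eq:output-budget}.

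\textbf{Step 2: truncation in $b$.} Restrict the offsets to $(0,B_M)$. For $x\in\Omega$ and $b>B_M$, we have $|\omega\cdot x-b|/\eps_M\ge\Delta_M/\eps_M$. This ratio is $a_G\sqrt{T_M}$ for $\sigma_G$, giving a factor $e^{-a_G^2T_M}=M^{-a_G^2}$, and $a_DT_M$ for $\sigma_D$, giving $e^{-2a_DT_M}$. Each derivative costs $\eps_M^{-1}$, and the $L^\infty$ bound on $g_M$ costs at most $M^{d/2+\cdots}$ relative to the $H^s$ norm. Condition \eqref{eq:margin} is exactly what makes the exterior tail $O(M^{m-s})$ in $H^m(\Omega)$.

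\textbf{Step 3: quadrature.} First discretize directions with the recursive product cubature of degree $q_M\asymp M$. After the offset integral is done, the integrand is a function of $\omega$ that is essentially a spherical polynomial of degree $\lesssim M$ plus an exponentially small remainder. Exactness of the cubature, together with the positivity of $w_\nu$, then controls the error.

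Then discretize offsets. Gauss--Legendre with $K_M$ nodes handles the $b$-integrand, which is analytic with effective bandwidth $\sim\eps_M^{-1}\sim M/T_M$; a Bernstein-ellipse estimate gives super-polynomial decay once the constant in $q_M$ is large. The midpoint rule is handled by Poisson summation: $h_M=\kappa_0/M$ against bandwidth $\lesssim M$ leaves aliasing terms involving $\widehat\sigma$ evaluated at $\eps_M\cdot 2\pi/h_M$, which are exponentially small. Exterior midpoint nodes are removed using Step 2.

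Finally, set $c_{\nu j,M}(u)=w_\nu v_jg_M(\omega_\nu,b_j)$. This is linear in $u$, real for real $u$, and its absolute sum is bounded by $\sup|g_M|\cdot\sum_\nu w_\nu\cdot B_M$, which gives \eqref{eq:output-budget}.

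\textbf{Conclusion and main obstacle.} Summing the errors from Steps 1--3 gives \eqref{eq:main-error}. Since $N_M\asymp M^d$ and $N_{M+1}/N_M\to1$, one can choose $M\asymp N^{1/d}$ with $N_M\le N$. The constants are uniform in $m$ because only finitely many values $m\le m_*$ occur. I expect the main obstacle to be the $\sigma_D$ spectral exclusion in Step 1. There one must simultaneously keep $1/\widehat\sigma_D$ polynomially bounded, keep the removed Fourier mass $O(M^{m-s})$ uniformly in $m$, and preserve the band-limited structure that Step 3 relies on. My first attempt would use a smooth frequency partition whose shell widths shrink polynomially in $M$.
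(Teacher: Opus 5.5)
There are two genuine gaps, in Steps~1 and~2. You flag Step~1 as the main obstacle; it remains open, and the choices you suggest do not close it. The shells sit at radii $\rho_r=r\pi/\eps_M$, the lowest at $\rho_1=\pi M/T_M<M$. Bessel-potential weights alone therefore bound the exclusion error in $H^m$ only by $\rho_1^{m-s}\asymp(T_M/M)^{s-m}$ times the $H^s$ norm of the removed part. That is a logarithmic loss unless the removed part is itself small, and $u\in H^s$ alone does not give this.

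The missing idea uses the compact support of $f=Eu$. Since $\nabla_\xi\widehat f=-\ii\widehat{xf}$ obeys the same weighted $L^2$ bound, $t\mapsto t^{(d-1)/2}\langle t\rangle^s\widehat f(t\,\cdot)$ lies in $H^1((1,\infty);L^2(\Sph))$. By the one-dimensional Sobolev embedding, a shell of physical width $a_r\le1$ then carries at most $Ca_r$ times the energy on a unit neighborhood of $\rho_r$. These neighborhoods are disjoint, so summing over the $K^{\rm root}\asymp T_M$ shells loses nothing. This bound yields $M^{m-s}$ for all $m\le m_*$ only if $a_r\lesssim(\rho_r/M)^{2s}$ on shells below $M$. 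A width of order $\rho_rM^{-(s-m_*)}$ is too wide on the lowest shells.

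Conversely, widths that shrink polynomially in $M$ at the top roots $r\asymp T_M$ are too narrow. Near $r\pi$ one has $\sup|\vartheta_M/F_D|\asymp e^{\pi^2r/2}/\delta_r$ with $\delta_r=\eps_Ma_r$. Its square integrates over the root cell to $\asymp e^{\pi^2r}\delta_r^{-1}$. An extra factor $M^{-\gamma}$ in $a_r$ there inflates the high-frequency parts of $\Gamma_M$ and $\overline\Gamma_M$ by $M^{\gamma/2}$. Where these parts are binding, the estimates then lose both the exponent $P_D$ in \eqref{eq:output-budget} and the threshold $2a_D>\pi+1/2$. The paper's $a_r=c_s(\rho_r/M)^{2s}$, which is of order one for $r\asymp T_M$, meets both constraints.

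For the same reason, \eqref{eq:output-budget} needs $\sup_{\omega,b}|A_M|\lesssim M^{P_\sigma}\norm u_{H^s(\Omega)}$, not an $L^1$ bound. This does not follow from $\sup|\mathfrak r_M|$, which loses at least $M^{1/2}$ for $\sigma_D$. It requires Cauchy--Schwarz against \eqref{eq:slice} together with the root-cell integration above.

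Your Step~2 does not reach the margins fixed in \eqref{eq:margin}. Bound the tail pointwise by $\sup_b|A_M|\,\eps_M^{1-m}$ times the activation tail at $\lambda=\Delta_M/\eps_M$, with $\sup|A_M|\lesssim\Gamma_M\norm u_{H^s(\Omega)}$. Up to logarithms, this gives $M^{m-2a_D}+M^{m-s}M^{d+\pi-2a_D}$ for $\sigma_D$ and $M^{m-a_G^2}+M^{m-s}M^{d+1/2-a_G^2}$ for $\sigma_G$. That needs roughly $2a_D>\max\{s,d+\pi\}$ and $a_G^2\ge\max\{s,d+1/2\}$. Condition \eqref{eq:margin} gives only $2a_D>\max\{s-d/2,\pi+1/2\}$ and $a_G^2\ge\max\{s-d/2,1\}$.

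The deficit, up to $M^{d-1/2}$, comes from two localization gains that your sketch omits. First, for $b>B_M$ the tail kernel is comparable to its maximum only when $x$ lies within $O(\eps_M)$ of $\partial B_R$ and $\omega$ lies in a cap of angular radius $O(\sqrt{\eps_M})$ about $x/|x|$. Measuring in $L^2(\Omega)$ and integrating in $\omega$ therefore gains $\eps_M^{d/2}$, with an extra $(1+\lambda)^{-d/2}$ for the Gaussian; this is \cref{lem:rotation}. Second, exploiting the cap requires replacing the uniform coefficient bound by the $L^2(\Sph)$ envelope $h_{f,M}$ of \eqref{eq:angular-envelope}. The high-frequency part of its constant $\overline\Gamma_M$ is smaller than that of $\Gamma_M$ by about $M^{(d-1)/2}$.

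For the exterior midpoint nodes, \emph{removed using Step~2} is not enough, because that sum runs over cubature directions. You must first replace the cubature by the spherical integral at each fixed offset, using exactness and polynomial approximation in $\omega$. You must also compare the offset sum with the integral of a monotone majorant, as in \cref{lem:discrete-tail}.

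With that addition, your full-line Poisson-summation treatment of the midpoint rule is a workable and simpler alternative to the paper's auxiliary-period construction. Note, however, that the quadrature remainders are only polynomially small in $M$, not exponentially or super-polynomially small. Examples are aliasing terms like $F_\sigma(\eps_McM)=O(M^{-c'})$ and the Gauss--Legendre error for $\sigma_D$, whose poles lie at distance $\asymp\eps_M$ from the real axis. Their exponents must be made large through $\kappa_0$ and $c_q$.
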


\begin{remark}[Optimality for fixed linear spaces]\label{rem:optimality}
For each integer $0\le m<s$, classical Sobolev width estimates give a
constant $c=c(\Omega,d,s,m)>0$, independent of $N$ and $V$, such that
every linear space $V\subset H^m(\Omega)$ with $\dim V\le N$ satisfies
\begin{equation}\label{eq:width-lower}
 \sup_{\norm u_{H^s(\Omega)}\le1}\inf_{v\in V}
       \norm{u-v}_{H^m(\Omega)}\ge c(N+1)^{-(s-m)/d}.
\end{equation}
For real Sobolev spaces, the classical estimate follows from
Dahlke et~al.~\cite[Theorem~12]{DahlkeNovakSickel2006II} and the
equivalence of approximation numbers and Kolmogorov widths in Hilbert
spaces. For a complex space $V$ of dimension at most $N$, the
real parts of its elements form a real linear space of dimension at most
$2N$. For real $u$, projection onto real parts gives
$\norm{u-\operatorname{Re}v}_{H^m(\Omega)}
\le\norm{u-v}_{H^m(\Omega)}$.
Applying the real width bound at dimension $2N$ yields
\eqref{eq:width-lower} with an adjusted constant.
Thus \cref{thm:main} attains the optimal worst-case approximation order
for the Sobolev unit ball among spaces of dimension at most $N$ chosen
independently of the target.
\end{remark}

The coefficient functionals in \eqref{eq:network} follow from the
continuous ridge representation and truncation estimates in
\cref{sec:representation}. We discretize this representation using the
prescribed quadrature rules in \cref{sec:discretization} and combine the
resulting error bounds to prove \cref{thm:main}.

\section{Ridge representation and offset truncation}\label{sec:representation}
We construct the continuous approximation to be discretized in
\cref{sec:discretization}. We first apply a frequency cutoff to the
extension $f=Eu$. For the tanh difference, we also exclude neighborhoods
of its nonzero Fourier zeros. Sobolev regularity controls the resulting
frequency error, and the modified function admits an exact ridge
representation. We then combine coefficient estimates with activation
decay and angular localization near $\partial B_R$ to truncate the offset
integral to $[0,B_M]$. This gives a continuous ridge approximation with
$H^m(\Omega)$ error of order $M^{m-s}$.

\subsection{Frequency cutoff and spectral exclusions}
Fix a real, even cutoff $\chi\in C^\infty(\R)$ with $0\le\chi\le1$,
$\chi=1$ on $[-1,1]$, and $\chi=0$ outside $[-2,2]$.
An explicit choice is
\[
 \chi(t)=\frac{r(2-|t|)}{r(2-|t|)+r(|t|-1)},\qquad
 r(z)=\begin{cases}e^{-1/z},&z>0,\\0,&z\le0.\end{cases}
\]
Define $f_M$ by its Fourier transform,
$\widehat f_M(\xi)=\chi(|\xi|/M)\widehat f(\xi)$.
The multiplier $1-\chi(|\xi|/M)$ vanishes for $|\xi|\le M$.
For every real $0\le t\le s$, the bound
$\langle\xi\rangle^{t-s}\le CM^{t-s}$ holds on $|\xi|>M$.
Plancherel's identity therefore gives
$\norm{f-f_M}_{H^t(\R^d)}\le CM^{t-s}\norm f_{H^s(\R^d)}$.
Restricting to $\Omega$ and using \eqref{eq:extension}, we obtain
\begin{equation}\label{eq:cutoff-error}
 \norm{u-f_M}_{H^m(\Omega)}\le CM^{m-s}\norm{u}_{H^s(\Omega)},\qquad 0\le m\le m_*.
\end{equation}
Cauchy-Schwarz gives
$\norm{\widehat f_M}_{L^1(\R^d)}\le CM^{d/2}\norm{\widehat f}_{L^2(\R^d)}$.
The inverse Fourier integral therefore converges absolutely. The compact support
assumption used below applies only to the extension $f=Eu$.

Let $F_\sigma$ denote the one-dimensional Fourier transform of $\sigma$,
written as $F_G$ and $F_D$ for the two activations:
\begin{equation}\label{eq:transforms}
 F_G(\eta)=\sqrt\pi e^{-\eta^2/4},\qquad
 F_D(\eta)=\frac{2\pi\sin\eta}{\sinh(\pi\eta/2)},\quad F_D(0)=4.
\end{equation}
The value of $F_D$ at zero is understood by continuity.
Integration by parts gives $F_G'=-\eta F_G/2$, with
$F_G(0)=\sqrt\pi$. For the tanh difference, write
$\sigma_D(t)=\int_{-1}^1\sech^2(t+r)\dd r$.
The beta integral and identities for the gamma function give
$\int_\R\sech^2(t)e^{-\ii\eta t}\dd t=\pi\eta/\sinh(\pi\eta/2)$.
Integration over the shift variable gives the factor $2\sin\eta/\eta$ and proves
\eqref{eq:transforms}.
Thus $F_G$ has no real zeros, and the zeros of $F_D$ are precisely
the nonzero multiples of $\pi$.

The scales in \eqref{eq:parameters} localize the ridge features. The
representation also requires control of the reciprocal of
$F_\sigma(\eps_M\tau)$, where $\tau$ is the signed radial frequency
and $\eta=\eps_M\tau$ is the scaled frequency. On the retained band
$|\tau|\le2M$, the Gaussian scale $\eps_M=\sqrt{T_M}/M$ gives
$|F_G(\eps_M\tau)|^{-1}=O(M)$ uniformly in $\tau$.
For the tanh difference, the scale $\eps_M=T_M/M$ gives the scaled
band $|\eta|\le2T_M$. We exclude neighborhoods of the relevant
nonzero zeros of $F_D$ before defining the reciprocal.

For $\sigma_G$, put $\vartheta_M=1$. For $\sigma_D$, we prescribe
the exclusion widths and the exclusion function $\vartheta_M$ by
\begin{equation}\label{eq:notches}
 \begin{gathered}
 K^{\rm root}=\left\lfloor\frac{2T_M}{\pi}+\frac12\right\rfloor,
 \quad c_s=(32\,3^{2s})^{-1},\\
 \rho_r=\frac{r\pi}{\eps_M},\quad a_r=c_s(\rho_r/M)^{2s},\quad
 \delta_r=\eps_M a_r,\\
 \vartheta_M(\eta)=1-\sum_{r=1}^{K^{\rm root}}
 \left[\chi\!\left(\frac{\eta-r\pi}{\delta_r}\right)
       +\chi\!\left(\frac{\eta+r\pi}{\delta_r}\right)\right].
 \end{gathered}
\end{equation}
For $1\le r\le K^{\rm root}$, the positive zero $r\pi$ in scaled
frequency corresponds to the physical radial frequency $\rho_r$.
The quantities $a_r$ and $\delta_r=\eps_Ma_r$ are the inner
half-widths of the exclusion neighborhoods in the physical and scaled
frequency variables, respectively. In particular,
\begin{equation}\label{eq:notch-width-explicit}
 \delta_r=c_sM^{-1}T_M^{1-2s}(r\pi)^{2s}.
\end{equation}
For sufficiently large $M$, $T_M\ge\pi/2$ and
\[
 \rho_r\le2M+\frac{\pi M}{2T_M}\le3M,\qquad
 a_r\le\frac1{32},\qquad \delta_r\le\frac{\eps_M}{32}.
\]
The translated cutoff terms in \eqref{eq:notches} have disjoint supports,
each with half-width $2\delta_r$. Thus
$\vartheta_M$ is real, even, and smooth, with $0\le\vartheta_M\le1$,
vanishing within $\delta_r$ of each listed root and equal to one
outside the $2\delta_r$-neighborhoods.
The root cells
\[
 C_r=[r\pi-\pi/2,r\pi+\pi/2],\qquad 1\le r\le K^{\rm root},
\]
cover $[\pi/2,2T_M]$ by the choice of $K^{\rm root}$. The last cell is
included even if its center lies beyond the retained band, so the cells
cover the upper edge. No exclusion is needed near the origin, where
$F_D$ is nonzero.

The widths depend only on $s,M$ and the root index. Although $a_r\to0$
for each fixed $r$, it can remain of constant order for $r\asymp T_M$.
We therefore estimate the exclusion error using shell separation and
frequency weights. This argument does not require the physical radial
widths to converge uniformly to zero.
Combining the frequency cutoff with the exclusions, we define
$f_M^\star$ by its Fourier transform and $\mathfrak r_M$ as the
regularized inverse multiplier:
\begin{equation}\label{eq:modified}
 \begin{gathered}
 \widehat f_M^\star(\xi)=\chi(|\xi|/M)
 \vartheta_M(\eps_M|\xi|)\widehat f(\xi),\\
 \mathfrak r_M(\tau)=\frac{\vartheta_M(\eps_M\tau)}{F_\sigma(\eps_M\tau)},
 \qquad |\tau|\le2M.
 \end{gathered}
\end{equation}
For the tanh difference, we set the quotient to zero at the excluded
roots. Since its numerator vanishes in a neighborhood of each root,
this convention makes $\mathfrak r_M$ continuous on the retained band.

\begin{lemma}\label{lem:frequency}
The frequency approximation satisfies
\begin{equation}\label{eq:frequency-error}
 \norm{u-f_M^\star}_{H^m(\Omega)}\le CM^{m-s}\norm{u}_{H^s(\Omega)},\qquad 0\le m\le m_*.
\end{equation}
The inverse multiplier satisfies
\begin{equation}\label{eq:inverse}
 \mathcal D_M:=\sup_{|\tau|\le2M}|\mathfrak r_M(\tau)|
 \le\begin{cases}CM,&\sigma_G,\\C(1+M^{1+\pi}/T_M),&\sigma_D.\end{cases}
\end{equation}
\end{lemma}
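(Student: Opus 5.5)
For the Gaussian the frequency error reduces to \eqref{eq:cutoff-error}, since $\vartheta_M=1$ and $f_M^\star=f_M$. For the tanh difference I will split
\[
u-f_M^\star=(u-f_M)+(f_M-f_M^\star),\qquad
\widehat{f_M-f_M^\star}(\xi)=\chi(|\xi|/M)\bigl(1-\vartheta_M(\eps_M|\xi|)\bigr)\widehat f(\xi).
\]
The first term is handled by \eqref{eq:cutoff-error}. For the second it suffices to bound $\norm{f_M-f_M^\star}_{H^m(\R^d)}$, because restriction to $\Omega$ does not increase the $H^m$ norm.

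The multiplier $1-\vartheta_M(\eps_M|\xi|)$ is supported in the radial shells $S_r=\{\,||\xi|-\rho_r|\le 2a_r\,\}$. This uses $2\delta_r/\eps_M=2a_r$ and the negative-root terms vanishing for $|\xi|\ge0$, since $2\delta_r<r\pi$. These shells are pairwise disjoint: $a_r\le1/32$, while consecutive $\rho_r$ are separated by $\pi/\eps_M\gg1$. On $S_r$ we have $\langle\xi\rangle\le\rho_r+1$, and the multiplier is bounded by $1$. So the $H^m$ contribution of $S_r$ is at most $(\rho_r+1)^{2m}\int_{S_r}|\widehat f|^2$.

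The naive bound $\int_{S_r}|\widehat f|^2\le\rho_r^{-2s}\norm f_{H^s}^2$ loses a factor $K^{\rm root}\asymp T_M$ when summed. I will therefore not rely on a small width. Instead I use disjointness:
\[
\sum_r\rho_r^{2m}\int_{S_r}|\widehat f|^2
\le\sum_r\rho_r^{2m-2s}\int_{S_r}\langle\xi\rangle^{2s}|\widehat f|^2 .
\]
Here $\rho_r\ge\pi M/T_M$, so the factor $\rho_r^{2m-2s}$ is at most $(\pi M/T_M)^{2(m-s)}$. This costs a power of $T_M$, which is a problem.

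The remedy is to exploit the width $a_r\propto(\rho_r/M)^{2s}$, which is small for small $\rho_r$. Since $\widehat f$ is the transform of a compactly supported $L^1$ function, $\norm{\widehat f}_{L^\infty}\le C\norm u_{H^s}$, and the same holds for its gradient. Hence
\[
\int_{S_r}|\widehat f|^2\le C\rho_r^{d-1}a_r\norm u_{H^s}^2 .
\]
This gives a per-shell $H^m$ contribution of order
\[
\rho_r^{2m+d-1}\,c_s(\rho_r/M)^{2s}\norm u_{H^s}^2 .
\]
For shells with $\rho_r\le M$ I will combine this with the weighted bound. The two estimates are compared by taking the minimum, equivalently interpolating through $\min(a,b)\le a^\theta b^{1-\theta}$. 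The cleanest route is to choose $\theta$ so that the sum over $r$ is geometric in $\rho_r/M$, which is dominated by its top shells where $\rho_r\asymp M$; the disjointness bound alone already gives $M^{2(m-s)}$ there.

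This balancing step is the main obstacle. If it fails for large $d$, I would instead use the $L^\infty$ bound on the $\widehat f$ part combined with the smallness of $c_s$, tracking the exponent carefully.

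For \eqref{eq:inverse}:
\begin{itemize}
\item \emph{Gaussian.} For $|\tau|\le2M$ we have $\eps_M^2\tau^2/4\le T_M$, so $|F_G(\eps_M\tau)|^{-1}\le\pi^{-1/2}(M+2)\le CM$.
\item \emph{Tanh difference, $|\eta|\le\pi/2$.} Here $F_D(\eta)$ is bounded below by a positive constant.
\item \emph{Tanh difference, $\eta\in C_r$.} Outside the $\delta_r$-neighborhood of $r\pi$, where $\vartheta_M$ vanishes inside, we have $|\sin\eta|\ge(2/\pi)\delta_r$ and $\sinh(\pi\eta/2)\le e^{\pi|\eta|/2}$ with $|\eta|\le 2T_M$. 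Thus $|\mathfrak r_M|\le C e^{\pi T_M}/\delta_r$.
\end{itemize}
By \eqref{eq:notch-width-explicit}, $\delta_r\ge c_sM^{-1}T_M^{1-2s}\pi^{2s}$ for $r\ge1$. The estimate is sharpened by using $e^{\pi|\eta|/2}$ with $|\eta|\le r\pi+\pi/2$ against $\delta_r\propto r^{2s}T_M^{1-2s}$. At the top cell $r\asymp T_M$ this gives $\delta_r\gtrsim T_M/M$ and $e^{\pi\eta/2}\lesssim M^{\pi}$. At small $r$ the exponential factor is $O(1)$ and $\delta_r^{-1}\lesssim MT_M^{2s-1}$, which must also be dominated by $M^{1+\pi}/T_M$. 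Together these give $C(1+M^{1+\pi}/T_M)$.
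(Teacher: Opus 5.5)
Your Gaussian case, the reduction to $\norm{f_M-f_M^\star}_{H^m(\R^d)}$, the shell geometry, and the inverse-multiplier bound \eqref{eq:inverse} are fine. For the latter, your cellwise maximum is a slightly leaner version of the paper's summation via \eqref{eq:exponential-power-sum}. You should note explicitly that $r\mapsto r^{-2s}e^{\pi^2r/2}$ is log-convex, so comparing the two endpoint cells suffices.

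The gap is in the tanh-difference frequency error. You correctly see that the width $a_r\propto(\rho_r/M)^{2s}$ must be exploited, but the global bound $\int_{S_r}|\widehat f|^2\le C\rho_r^{d-1}a_r\norm u_{H^s(\Omega)}^2$ cannot do this. Write $e_r=\int_{S_r}\langle\xi\rangle^{2s}|\widehat f|^2$. Your two inequalities, this one and $\sum_r e_r\le C\norm u_{H^s(\Omega)}^2$, do not exclude a configuration with $e_1\asymp\norm u_{H^s(\Omega)}^2$ on the lowest shell $\rho_1=\pi M/T_M$. On that shell the $L^\infty$ bound exceeds the weighted bound by a factor of order $M^{d-1+2s}$, up to logarithms. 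So the minimum is the weighted bound $\rho_1^{2(m-s)}e_1$, and the error is of order $(M/T_M)^{m-s}\norm u_{H^s(\Omega)}$. Since $\min(a,b)\le a^\theta b^{1-\theta}$ is never sharper than the minimum itself, no choice of $\theta$ helps, and this happens for every $d\ge2$. The fallback via the smallness of $c_s$ fails as well, because $c_s$ is a fixed constant while the $L^\infty$ bound is off by a power of $M$. A global $L^\infty$ bound on $\nabla_\xi\widehat f$ does not help either, since it carries no weight $\langle\xi\rangle^{-s}$.

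The missing idea is to use the compact support of $f$ in a weighted, localized $L^2$ form, which prevents $\widehat f$ from concentrating on shells of width $a_r\ll1$. Since $x_jf=(x_j\psi)f$, one has $\norm{x_jf}_{H^s(\R^d)}\le C\norm u_{H^s(\Omega)}$, which gives the weighted gradient bound \eqref{eq:frequency-derivative}. Hence $q(t)=t^{(d-1)/2}\langle t\rangle^s\widehat f(t\,\cdot)$ belongs to $H^1((1,\infty);L^2(\Sph))$. The one-dimensional Sobolev embedding on the disjoint unit-scale intervals $J_r=[\rho_r-1,\rho_r+1]\supset I_r$ then gives $\int_{I_r}\norm{q(t)}_{L^2(\Sph)}^2\dd t\le Ca_rE_r$. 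Here $E_r$ is the local $H^1$ energy of $q$ on $J_r$, and $\sum_rE_r\le C\norm u_{H^s(\Omega)}^2$. The width then exactly cancels the weight: $a_r\rho_r^{2(m-s)}=c_sM^{-2s}\rho_r^{2m}\le c_s3^{2m}M^{2(m-s)}$ uniformly in $r$. Summing over the shells gives \eqref{eq:frequency-error} with no factor $K^{\rm root}$ and no logarithmic loss. Without such a localized estimate, your argument proves \eqref{eq:frequency-error} only up to a factor $T_M^{s-m}$.
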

\begin{proof}
For $\sigma_G$, $f_M^\star=f_M$ and the real-order cutoff estimate applies.
For $\sigma_D$, choose a fixed smooth compactly supported $\psi$ equal to one
near $B_{R_0}$. Since $x_jf=(x_j\psi)f$, boundedness of smooth
multiplication on $H^s$ and \eqref{eq:extension} imply
$\norm{x_jf}_{H^s(\R^d)}\le C\norm{u}_{H^s(\Omega)}$. Plancherel's identity and
$\partial_{\xi_j}\widehat f=-\ii\widehat{x_jf}$ therefore give
\begin{equation}\label{eq:frequency-derivative}
 \int_{\R^d}\langle\xi\rangle^{2s}
 (|\widehat f(\xi)|^2+|\nabla_\xi\widehat f(\xi)|^2)\dd\xi\le C\norm{u}_{H^s(\Omega)}^2.
\end{equation}
We differentiate only $\widehat f$ in this estimate, so no derivatives
of the exclusion multiplier are required.

For $t\ge1$, define the $L^2(\Sph)$-valued function
\[
 q(t)(\omega)=t^{(d-1)/2}\langle t\rangle^s\widehat f(t\omega).
\]
Its weak derivative is
\[
 q'(t)=\left(\frac{d-1}{2t}+\frac{st}{1+t^2}\right)q(t)
       +t^{(d-1)/2}\langle t\rangle^s
                    \omega\cdot\nabla_\xi\widehat f(t\omega).
\]
The coefficient of $q(t)$ is bounded. Applying polar coordinates to
\eqref{eq:frequency-derivative} therefore gives
\begin{equation}\label{eq:radial-h1}
 \int_1^\infty\left( \norm{q(t)}_{L^2(\Sph)}^2+
                 \norm{q'(t)}_{L^2(\Sph)}^2\right) \dd t\le C\norm{u}_{H^s(\Omega)}^2.
\end{equation}

For sufficiently large $M$, each excluded radial interval
$I_r=[\rho_r-2a_r,\rho_r+2a_r]$ lies in
$J_r=[\rho_r-1,\rho_r+1]\subset(1,\infty)$. Containment and mutual
disjointness follow from $a_r\le1/32$ and
$\rho_{r+1}-\rho_r=\pi/\eps_M\to\infty$. Set
\[
 E_r=\int_{J_r}\left(\norm{q(t)}_{L^2(\Sph)}^2+
             \norm{q'(t)}_{L^2(\Sph)}^2\right)\dd t.
\]
Since $h(t)=\norm{q(t)}_{L^2(\Sph)}\in H^1(J_r)$ and
$|h'(t)|\le\norm{q'(t)}_{L^2(\Sph)}$ almost everywhere, the one-dimensional
Sobolev embedding \cite{Brezis2011} on the interval $J_r$, whose length
is two, gives $\sup_{t\in J_r}\norm{q(t)}_{L^2(\Sph)}^2\le CE_r$ uniformly in $r,M$.
Integrating over $I_r$ and using disjointness yields
\begin{equation}\label{eq:thin-shell}
 \int_{I_r}\norm{q(t)}_{L^2(\Sph)}^2\dd t\le Ca_r E_r,
 \qquad \sum_{r=1}^{K^{\rm root}}E_r\le C\norm{u}_{H^s(\Omega)}^2.
\end{equation}
Because the intervals $J_r$ are disjoint, this bound has no factor
proportional to the number of excluded shells.

On $I_r$, $t\asymp\rho_r\ge1$. The exclusion multiplier is bounded
in absolute value by one. Hence, for every real $0\le\ell\le s$,
\begin{equation}\label{eq:weighted-shell-error}
 \begin{aligned}
 \norm{f_M-f_M^\star}_{H^\ell(\R^d)}^2
 &\le C\sum_{r=1}^{K^{\rm root}}\rho_r^{2(\ell-s)}
                    \int_{I_r}\norm{q(t)}_{L^2(\Sph)}^2\dd t\\
 &\le C\sum_{r=1}^{K^{\rm root}}a_r\rho_r^{2(\ell-s)}E_r.
 \end{aligned}
\end{equation}
The prescribed shell width satisfies
\[
 a_r\rho_r^{2(\ell-s)}
 =c_sM^{-2s}\rho_r^{2\ell}
 \le c_s3^{2\ell}M^{2(\ell-s)}.
\]
Combining this estimate with \eqref{eq:thin-shell} gives
$\norm{f_M-f_M^\star}_{H^\ell(\R^d)}\le CM^{\ell-s}\norm{u}_{H^s(\Omega)}$.
The real-order cutoff estimate then proves \eqref{eq:frequency-error}
for all $0\le\ell\le s$ with the same exclusions.

For the Gaussian activation, $|\mathfrak r_M|\le\pi^{-1/2}e^{\eps_M^2M^2}
=\pi^{-1/2}(M+2)$. For the tanh-difference inverse multiplier, we use
\begin{equation}\label{eq:exponential-power-sum}
 \sum_{r=1}^K r^\alpha e^{br}
 \le C_{\alpha,b}K^\alpha e^{bK},
 \qquad \alpha\in\R,\quad b>0,\quad K\ge1.
\end{equation}
For $r>K/2$, comparability of $r^\alpha$ and $K^\alpha$ reduces the sum to
a geometric series after factoring out $e^{bK}$. For $r\le K/2$, use
$e^{br}\le e^{bK/2}$. The remaining powers or logarithms of $K$ are
absorbed by $e^{bK/2}$. The argument applies to every real $\alpha$.

The reciprocal $1/F_D$ is bounded on $[-\pi/2,\pi/2]$. On each cell $C_r$,
the inequality $|\sin\eta|\ge c|\eta-r\pi|$ gives
\[
 \sup_{C_r\cap[0,2T_M]}|\vartheta_M/F_D|
 \le C\delta_r^{-1}e^{\pi^2r/2}
 \le CM T_M^{2s-1}r^{-2s}e^{\pi^2r/2}.
\]
With $K^{\rm root}\asymp T_M$ and $K^{\rm root}\pi\le2T_M+\pi/2$, the case
$\alpha=-2s$, $b=\pi^2/2$ of \eqref{eq:exponential-power-sum} gives
\[
 \mathcal D_M
 \le C\left(1+MT_M^{2s-1}(K^{\rm root})^{-2s}e^{\pi^2K^{\rm root}/2}\right)
 \le C\left(1+\frac{M}{T_M}e^{\pi T_M}\right).
\]
Since $e^{\pi T_M}=(M+2)^\pi$, this proves \eqref{eq:inverse}.
The factor $T_M^{-1}$ follows from the variation of the exclusion widths.
\end{proof}

\subsection{Ridge representation with nonnegative offsets}
Using the regularized inverse multiplier, define the coefficient function
by a radial Fourier integral. For $\omega\in\Sph$ and $b\in\R$, set
\begin{equation}\label{eq:coefficient}
 A_M[u](\omega,b)=\frac1{(2\pi)^d\eps_M}
 \int_{-2M}^{2M}\chi(|\tau|/M)\widehat f(\tau\omega)
       |\tau|^{d-1}\mathfrak r_M(\tau)e^{\ii\tau b}\dd\tau.
\end{equation}
The integral is absolutely convergent and linear in $u$. For a fixed target
$u$, we write $A_M(\omega,b)=A_M[u](\omega,b)$ and suppress the target
argument below.
Write $g_{\omega,b}(x)=\sigma((\omega\cdot x-b)/\eps_M)$.

\begin{proposition}[Ridge representation]\label{prop:representation}
The coefficient $A_M$ satisfies
\begin{equation}\label{eq:representation}
 f_M^\star(x)=\int_{\Sph}\int_0^\infty A_M(\omega,b)g_{\omega,b}(x)
                 \dd b\dd S(\omega).
\end{equation}
Both the coefficient and the ridge feature are invariant under
$(\omega,b)\mapsto(-\omega,-b)$. The identity remains valid after any
fixed number of spatial derivatives on a bounded domain.
\end{proposition}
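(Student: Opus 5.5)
We start from the inverse Fourier formula for $f_M^\star$ in polar coordinates and convert the radial integral into a ridge integral. By \eqref{eq:modified}, $\widehat f_M^\star$ is bounded and supported in $|\xi|\le2M$, so $f_M^\star(x)=(2\pi)^{-d}\int_{\R^d}\widehat f_M^\star(\xi)e^{\ii x\cdot\xi}\dd\xi$ converges absolutely.

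\emph{Step 1: polar coordinates over the whole sphere.} Write $\xi=\tau\omega$ with $\tau\in\R$ and $\omega\in\Sph$, integrating over $\tau\in\R$ with weight $|\tau|^{d-1}$. Every $\xi\neq0$ is then covered twice, by $(\tau,\omega)$ and $(-\tau,-\omega)$, so a factor $\tfrac12$ appears:
\[
 f_M^\star(x)=\frac1{2(2\pi)^d}\int_{\Sph}\int_{-2M}^{2M}\chi(|\tau|/M)\vartheta_M(\eps_M|\tau|)\widehat f(\tau\omega)|\tau|^{d-1}e^{\ii\tau\,\omega\cdot x}\dd\tau\dd S(\omega).
\]

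\emph{Step 2: insert the activation.} For $|\tau|\le2M$, the Fourier transform of $b\mapsto g_{\omega,b}(x)$ gives
\[
 \int_\R\sigma\!\left(\frac{\omega\cdot x-b}{\eps_M}\right)e^{\ii\tau b}\dd b=\eps_M e^{\ii\tau\,\omega\cdot x}F_\sigma(\eps_M\tau),
\]
using the substitution $b=\omega\cdot x-\eps_M t$ and the evenness of $\sigma$, which makes $F_\sigma$ even. Where $\vartheta_M\neq0$, $F_\sigma\neq0$, so $\vartheta_M(\eps_M\tau)e^{\ii\tau\omega\cdot x}=\eps_M^{-1}\mathfrak r_M(\tau)\int_\R g_{\omega,b}(x)e^{\ii\tau b}\dd b$; where $\vartheta_M=0$ both sides vanish by our convention on $\mathfrak r_M$. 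Substituting and applying Fubini gives
\[
 f_M^\star(x)=\frac12\int_{\Sph}\int_\R A_M(\omega,b)g_{\omega,b}(x)\dd b\dd S(\omega).
\]
Fubini is justified because the $\tau$-integrand is bounded by $\mathcal D_M|\tau|^{d-1}|\widehat f(\tau\omega)|$, which is integrable over $[-2M,2M]\times\Sph$ since $\widehat f\in L^\infty$ ($f\in L^1$), and $\sigma\in L^1(\R)$ with exponential decay.

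\emph{Step 3: the symmetry.} Substituting $\tau\mapsto-\tau$ in \eqref{eq:coefficient} and using the evenness of $\chi$ and of $\mathfrak r_M$ (which follows from the evenness of $\vartheta_M$ and $F_\sigma$) gives $A_M(-\omega,-b)=A_M(\omega,b)$. Evenness of $\sigma$ gives $g_{-\omega,-b}=g_{\omega,b}$.

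\emph{Step 4: fold to nonnegative offsets.} Split $\int_\R=\int_0^\infty+\int_{-\infty}^0$. In the negative part, apply the measure-preserving change $(\omega,b)\mapsto(-\omega,-b)$ of $\Sph\times\R$; by Step 3 it equals the positive part, which cancels the factor $\tfrac12$ and yields \eqref{eq:representation}.

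\emph{Step 5: derivatives.} For spatial derivatives, $D_x^\alpha g_{\omega,b}=\eps_M^{-|\alpha|}\omega^\alpha\sigma^{(|\alpha|)}((\omega\cdot x-b)/\eps_M)$, and $\sigma^{(k)}$ is integrable with exponential decay. We repeat Step 2 with $F_{\sigma^{(k)}}(\eta)=(\ii\eta)^kF_\sigma(\eta)$; equivalently, we differentiate under the integral. Differentiation under the integral is allowed because $A_M$ is bounded, being a compact-band integral. However, $\int_0^\infty|A_M||D^\alpha g|\dd b$ need not be finite without decay of $A_M$ in $b$. The main obstacle is therefore the absolute convergence over the unbounded $b$-range. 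I would handle it by keeping the order $\tau$-then-$b$: perform the $b$-integral of $e^{\ii\tau b}g_{\omega,b}$ first, which is absolutely convergent for each $\tau$, and interpret \eqref{eq:representation} as an iterated integral. Alternatively, one can integrate by parts in $\tau$: the $\tau$-integrand of \eqref{eq:coefficient} is smooth in $\tau$ apart from $|\tau|^{d-1}$ at the origin, and together with compact support in $x$ this gives decay $|A_M(\omega,b)|\lesssim\langle b\rangle^{-1}$, hence absolute convergence against the exponentially localized $g_{\omega,b}$ for $x$ in the bounded domain. Local uniformity in $x\in\Omega$ then justifies differentiating under the integral.
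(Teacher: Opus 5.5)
Your proof is correct and follows the paper's argument: signed polar inversion with the factor $\tfrac12$, the plane-wave identity for $\int_\R\sigma((t-b)/\eps_M)e^{\ii\tau b}\dd b$, Fubini via $\int_\R|g_{\omega,b}(x)|\dd b=\eps_M\norm{\sigma}_{L^1(\R)}$, folding by the antipodal symmetry, and derivatives by replacing $\sigma$ with $\eps_M^{-|\alpha|}\sigma^{(|\alpha|)}$. The only misstep is the ``main obstacle'' in Step 5, which does not exist: $A_M$ is bounded uniformly in $(\omega,b)$ and $\int_\R|\partial_x^\alpha g_{\omega,b}(x)|\dd b\le\eps_M^{1-|\alpha|}\norm{\sigma^{(|\alpha|)}}_{L^1(\R)}$, so $\int_\R|A_M||\partial_x^\alpha g_{\omega,b}|\dd b$ is already finite (exactly as your own Step 2 bound shows), and the integration by parts giving $\langle b\rangle^{-1}$ decay is unnecessary.
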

\begin{proof}
A change of variables gives
\begin{equation}\label{eq:plane-wave}
 \int_\R\sigma((t-b)/\eps_M)e^{\ii\tau b}\dd b
 =\eps_M F_\sigma(\eps_M\tau)e^{\ii\tau t}.
\end{equation}
With the Fourier convention in \cref{sec:construction}, inversion in
signed polar coordinates gives
\begin{equation}\label{eq:signed-polar-inversion}
 f_M^\star(x)=\frac1{2(2\pi)^d}
 \int_{\Sph}\int_\R
 \widehat f_M^\star(\tau\omega)e^{\ii\tau\omega\cdot x}
 |\tau|^{d-1}\dd\tau\dd S(\omega).
\end{equation}
The factor $1/2$ accounts for the two parameterizations
$(\omega,\tau)$ and $(-\omega,-\tau)$ of a nonzero frequency. Substituting
\eqref{eq:plane-wave} and using
$\mathfrak r_M(\tau)F_\sigma(\eps_M\tau)
=\vartheta_M(\eps_M\tau)$ yields
\begin{equation}\label{eq:full-bias-representation}
 f_M^\star=\tfrac12\int_{\Sph}\int_\R A_Mg_{\omega,b}\dd b\dd S.
\end{equation}
The exchange of integrations is justified by
$\int_\R|\sigma((t-b)/\eps_M)|\dd b=\eps_M\norm\sigma_{L^1(\R)}$.
The absolute triple integral is therefore bounded by
\[
 C\mathcal D_M\norm\sigma_{L^1(\R)}
     \int_{|\xi|\le2M}|\widehat f(\xi)|\dd\xi<\infty.
\]

Replacing $\tau$ by $-\tau$ in \eqref{eq:coefficient} and using the
evenness of $\chi,\mathfrak r_M,\sigma$ gives
$A_M(-\omega,-b)=A_M(\omega,b)$ and $g_{-\omega,-b}=g_{\omega,b}$.
Under $(\omega,b)\mapsto(-\omega,-b)$, the negative-offset contribution to
\eqref{eq:full-bias-representation} equals the positive-offset contribution.
Cancelling the factor $1/2$ gives \eqref{eq:representation}. No spatial
symmetry assumption on $u$ is required.
For real $f$, $\widehat f(-\xi)=\overline{\widehat f(\xi)}$. Thus the
integrals over positive and negative frequencies in \eqref{eq:coefficient}
are complex conjugates, and $A_M$ is real.

For a multi-index $\alpha$,
\begin{equation}\label{eq:ridge-derivative}
 \partial_x^\alpha g_{\omega,b}(x)
 =\omega^\alpha\eps_M^{-|\alpha|}
   \sigma^{(|\alpha|)}((\omega\cdot x-b)/\eps_M).
\end{equation}
Each derivative of either activation belongs to
$L^1(\R)\cap L^\infty(\R)$.
Replacing $\norm{\sigma}_{L^1(\R)}$ by
$\eps_M^{-|\alpha|}\norm{\sigma^{(|\alpha|)}}_{L^1(\R)}$ in the same
absolute integrability argument justifies distributional differentiation.
On bounded domains, the uniform majorants for these derivatives are
integrable in $b$ and also justify classical differentiation.
\end{proof}
\subsection{Coefficient estimates}
The coefficient estimates involve two weighted radial integrals:
\begin{equation}\label{eq:budgets}
 \begin{split}
 \Gamma_M&=\eps_M^{-1}
  \left(\int_0^{2M}\tau^{2d-2}\langle\tau\rangle^{-2s}
                  |\mathfrak r_M(\tau)|^2\dd\tau\right)^{1/2},\\
 \overline\Gamma_M&=\eps_M^{-1}
  \left(\int_0^{2M}\tau^{d-1}\langle\tau\rangle^{-2s}
                  |\mathfrak r_M(\tau)|^2\dd\tau\right)^{1/2}.
 \end{split}
\end{equation}
The factor $\Gamma_M$ controls the uniform coefficient bounds. The tail
estimates use $\overline\Gamma_M$ together with an angular $L^2$
envelope. For complex offsets, define $A_M(\omega,z)$ by
replacing $b$ with $z\in\C$ in \eqref{eq:coefficient}.

\begin{lemma}\label{lem:budgets}
Uniformly over the sphere of directions and the real offset axis,
$|A_M(\omega,b)|\le C\Gamma_M\norm{u}_{H^s(\Omega)}$.
For each $\omega\in\Sph$, $A_M(\omega,\cdot)$ is entire, and for
$z\in\C$ and every integer $k\ge0$,
\begin{equation}\label{eq:complex-budget}
 \sup_{\omega\in\Sph}|\partial_z^k A_M(\omega,z)|
 \le C(2M)^k\Gamma_Me^{2M|\operatorname{Im}z|}\norm{u}_{H^s(\Omega)}.
\end{equation}

There is a nonnegative angular envelope $h_{f,M}\in L^2(\Sph)$ such that
\begin{equation}\label{eq:angular-envelope}
 \norm{h_{f,M}}_{L^2(\Sph)}\le C\norm{u}_{H^s(\Omega)},\qquad
 \sup_{b\in\R}|A_M(\omega,b)|\le C\overline\Gamma_Mh_{f,M}(\omega).
\end{equation}

The factors $\Gamma_M$ and $\overline\Gamma_M$ satisfy
\begin{equation}\label{eq:budget-values}
 \begin{array}{@{}c@{\qquad}cc@{}}
 &\Gamma_M\ \text{bounded by}&\overline\Gamma_M\ \text{bounded by}\\[5pt]
 \sigma_G
 &\displaystyle C\Bigl(\frac{M}{\sqrt{T_M}}+\frac{M^{d+3/2-s}}{T_M}\Bigr)
 &\displaystyle C\Bigl(\frac{M}{\sqrt{T_M}}+\frac{M^{d/2+2-s}}{T_M}\Bigr)\\[7pt]
 \sigma_D
 &\displaystyle C\Bigl(\frac{M}{T_M}+\frac{M^{d+1-s+\pi}}{T_M^2}\Bigr)
 &\displaystyle C\Bigl(\frac{M}{T_M}+\frac{M^{(d+3)/2-s+\pi}}{T_M^2}\Bigr).
 \end{array}
\end{equation}
Moreover, $c\eps_M^{-1}\le\Gamma_M\le CM^{P_\sigma}$.
\end{lemma}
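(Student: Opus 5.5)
The plan is to bound every quantity in \cref{lem:budgets} by Cauchy--Schwarz in the radial variable $\tau$ applied to \eqref{eq:coefficient}, and then to estimate the radial integrals \eqref{eq:budgets} using the bounds on $\mathfrak r_M$ from \cref{lem:frequency}. For the uniform bound, write $|\widehat f(\tau\omega)||\tau|^{d-1}|\mathfrak r_M(\tau)| =\bigl(\langle\tau\rangle^{s}|\widehat f(\tau\omega)|\bigr)\cdot \bigl(|\tau|^{d-1}\langle\tau\rangle^{-s}|\mathfrak r_M(\tau)|\bigr)$. This gives
\[
 |A_M(\omega,b)|\le C\Gamma_M\Bigl(\int_{-2M}^{2M}\langle\tau\rangle^{2s}
 |\widehat f(\tau\omega)|^2\dd\tau\Bigr)^{1/2}.
\]
The last factor is a one-dimensional restriction of $\widehat f$ along a line through the origin, so it is not controlled by $\norm f_{H^s}$ alone. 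I would use the compact support of $f$ to handle it. Since $\supp f\subset B_{R_0}$, we have $\widehat f=\widehat{f\psi}=(2\pi)^{-d}\widehat f*\widehat\psi$ with $\psi$ Schwartz. Peetre's inequality $\langle\tau\omega\rangle^{s}\le C\langle\tau\omega-\eta\rangle^{|s|}\langle\eta\rangle^s$ and Cauchy--Schwarz in $\eta$ then give $\langle\xi\rangle^s|\widehat f(\xi)|\le C\bigl(\int\langle\xi-\eta\rangle^{2|s|}|\widehat\psi(\xi-\eta)|\langle\eta\rangle^{2s}|\widehat f(\eta)|^2\dd\eta\bigr)^{1/2}$. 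Integrating along a line then costs only $\sup_\omega\int_\R\langle\tau\omega-\eta\rangle^{2|s|}|\widehat\psi(\tau\omega-\eta)|\dd\tau\le C$, uniformly in $\omega$. Hence $\sup_\omega\int_\R\langle\tau\rangle^{2s}|\widehat f(\tau\omega)|^2\dd\tau\le C\norm{u}_{H^s(\Omega)}^2$, and this is the main obstacle I expect.

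For complex $z$, the integrand of \eqref{eq:coefficient} is supported in $|\tau|\le2M$, so $A_M(\omega,\cdot)$ is entire. Differentiating $k$ times under the integral produces a factor $(\ii\tau)^k$ with $|\tau|^k\le(2M)^k$, and $|e^{\ii\tau z}|\le e^{2M|\operatorname{Im}z|}$. The same Cauchy--Schwarz argument then yields \eqref{eq:complex-budget}.

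For the envelope, split the integrand instead as $\bigl(|\tau|^{(d-1)/2}\langle\tau\rangle^s|\widehat f(\tau\omega)|\bigr)\cdot \bigl(|\tau|^{(d-1)/2}\langle\tau\rangle^{-s}|\mathfrak r_M|\bigr)$ and set
\[
 h_{f,M}(\omega)=\Bigl(\int_\R|\tau|^{d-1}\langle\tau\rangle^{2s}|\widehat f(\tau\omega)|^2\dd\tau\Bigr)^{1/2}.
\]
Polar coordinates give $\norm{h_{f,M}}_{L^2(\Sph)}^2=2\int_{\R^d}\langle\xi\rangle^{2s}|\widehat f|^2\le C\norm{u}_{H^s(\Omega)}^2$, and the second bound in \eqref{eq:angular-envelope} follows directly with the factor $\overline\Gamma_M$.

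For the values in \eqref{eq:budget-values}, split the radial integral at $\tau\le M/T_M^{\beta}$, with $\beta$ chosen so that the scaled frequency $\eps_M\tau$ is bounded there; this means $\beta=1/2$ for $\sigma_G$ and $\beta=1$ for $\sigma_D$. On that low band, $|\mathfrak r_M|\le C$. The integral $\int_0^{M/T_M^\beta}\tau^{2d-2}\langle\tau\rangle^{-2s}$ is then at most $C(1+(M/T_M^{\beta})^{2d-1-2s})$, and multiplying its square root by $\eps_M^{-1}$ gives the first terms, with lower-order pieces absorbed. On the high band, use $|\mathfrak r_M(\tau)|\le C\exp(\eps_M^2\tau^2/4)$ for $\sigma_G$, and the cellwise bound from the proof of \cref{lem:frequency} for $\sigma_D$. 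For $\sigma_G$, the integral $\int^{2M}\tau^{2d-2-2s}e^{\eps_M^2\tau^2/2}\dd\tau$ is dominated by its endpoint: it is at most $CM^{2d-2-2s}e^{2T_M}/(\eps_M^2M)$. Its square root times $\eps_M^{-1}$ gives $M^{d+3/2-s}/T_M$ after inserting $\eps_M=\sqrt{T_M}/M$. For $\sigma_D$, integrate the squared cell bound over each excluded neighborhood and cell, sum with \eqref{eq:exponential-power-sum}, and track the powers of $T_M$; this yields $M^{d+1-s+\pi}/T_M^2$. The computation for $\overline\Gamma_M$ is identical with $d-1$ in place of $2d-2$.

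Finally, $\Gamma_M\ge c\eps_M^{-1}$ follows from $|\mathfrak r_M|\ge c$ on $\tau\in[0,1]$, where $F_\sigma(\eps_M\tau)$ is bounded. Both terms in the upper bound are at most $CM^{P_\sigma}$, because $M/\sqrt{T_M}\le M^{P_\sigma}$ (as $P_\sigma\ge1$) and $M^{d+3/2-s}$ (respectively $M^{d+1-s+\pi}$) is at most $M^{P_\sigma}$.
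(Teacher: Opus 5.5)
Your treatment of the coefficient bounds follows the paper. You apply Cauchy--Schwarz in $\tau$ against the two weights defining $\Gamma_M$ and $\overline\Gamma_M$, differentiate under the compactly supported $\tau$-integral for \eqref{eq:complex-budget}, and use the same envelope; integrating it over all of $\R$ is harmless. Your Gaussian split at $\tau=M/\sqrt{T_M}$ with an endpoint bound, and your lower and upper bounds on $\Gamma_M$, also match the paper.

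The genuine difference is the slice estimate $\sup_\omega\int_\R\langle\tau\rangle^{2s}|\widehat f(\tau\omega)|^2\dd\tau\le C\norm f_{H^s(\R^d)}^2$. The paper takes the partial Fourier transform along $\omega$. It uses the vanishing for transverse $|y|>R_0$ to apply Cauchy--Schwarz on a fixed ball, then Plancherel in $y$ with $\langle\tau\rangle\le\langle\tau\omega+\zeta\rangle$ for $\zeta\perp\omega$. You write $\widehat f=(2\pi)^{-d}\widehat f*\widehat\psi$ and apply Peetre's inequality with a weighted Cauchy--Schwarz. This reduces uniformity in $\omega$ to a uniform bound on line integrals of a rapidly decreasing function. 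Your argument is correct. It needs a Schwartz cutoff where the paper's is elementary, but it holds for every real $s$; the paper's monotonicity step needs $s\ge0$, which suffices here.

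The step that fails as you describe it is the $\sigma_D$ high band. The cellwise bound in the proof of \cref{lem:frequency} is the supremum bound $\sup_{C_r}|\vartheta_M/F_D|\le C\delta_r^{-1}e^{\pi^2r/2}$. Squaring it and integrating over $C_r$ costs $\delta_r^{-2}$ per cell. With $L_0=M/T_M$ and $p=2d-2-2s$, the high-band part of $\Gamma_M^2$ is then bounded only by
\[
 CL_0^{p+3}M^2T_M^{4s-2}\sum_{r=1}^{K^{\rm root}}r^{p-4s}e^{\pi^2r}
 \le C\frac{M^{p+5+2\pi}}{T_M^{5}}.
\]
This gives $\Gamma_M\le CM^{d+3/2-s+\pi}T_M^{-5/2}$, weaker than the claim by $(M/T_M)^{1/2}$, so $\Gamma_M\le CM^{P_D}$ would not follow. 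You must integrate the pointwise bound instead. Since $\vartheta_M$ vanishes within $\delta_r$ of $r\pi$ and $|\sin\eta|\ge c|\eta-r\pi|$ on $C_r$,
\[
 \int_{C_r}\frac{|\vartheta_M(\eta)|^2}{\sin^2\eta}\dd\eta
 \le C\int_{\delta_r}^{\pi/2}v^{-2}\dd v\le C\delta_r^{-1}.
\]
Use $|\mathfrak r_M(L_0\eta)|^2\le Ce^{\pi\eta}|\vartheta_M(\eta)|^2/\sin^2\eta$ together with $e^{\pi\eta}\le Ce^{\pi^2r}$ and $\eta^p\le C_pr^p$ on $C_r$. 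The high-band part is then at most $CL_0^{p+3}MT_M^{2s-1}\sum_r r^{p-2s}e^{\pi^2r}$. By \eqref{eq:exponential-power-sum}, $K^{\rm root}\asymp T_M$, and $K^{\rm root}\pi\le2T_M+\pi/2$, this is at most $CM^{p+4+2\pi}T_M^{-4}$, which is the claimed $M^{d+1-s+\pi}/T_M^2$. The choice $p=d-1-2s$ gives $\overline\Gamma_M$. If this is what you meant by integrating over each excluded neighborhood and cell, state it explicitly. The paper itself notes that the supremum route loses here.

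A smaller point: your low-band bound $C(1+(M/T_M^\beta)^{2d-1-2s})$ omits the logarithm in the critical cases $s=d-1/2$ for $\Gamma_M$ and $s=d/2$ for $\overline\Gamma_M$. Also, for $p\ge-1$ the low band is absorbed into the second term of \eqref{eq:budget-values}, not the first. Both are harmless, since the high-band term exceeds the low-band contribution by a positive power of $M$.
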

\begin{proof}
We begin with the uniform slice estimate
\begin{equation}\label{eq:slice}
 \int_\R\langle\tau\rangle^{2s}|\widehat f(\tau\omega)|^2\dd\tau
 \le C\norm f_{H^s(\R^d)}^2.
\end{equation}
For $y\in\omega^\perp$, define the partial Fourier transform of $f$
along $\omega$ by
\[
 \widetilde f_\omega(\tau,y)=\int_\R f(t\omega+y)e^{-\ii t\tau}\dd t.
\]
It vanishes for $|y|>R_0$, and
$\widehat f(\tau\omega)=\int_{\omega^\perp}\widetilde f_\omega(\tau,y)\dd y$.
Cauchy-Schwarz in the fixed transverse ball and Plancherel's identity
in the transverse variables, with Fourier variable $\zeta\in\omega^\perp$, give
\begin{align*}
 \int_\R\langle\tau\rangle^{2s}|\widehat f(\tau\omega)|^2\dd\tau
 &\le C\int_\R\langle\tau\rangle^{2s}
                \int_{\omega^\perp}|\widetilde f_\omega(\tau,y)|^2\dd y\dd\tau\\
 &\le C\int_\R\int_{\omega^\perp}\langle\tau\rangle^{2s}
                   |\widehat f(\tau\omega+\zeta)|^2\dd\zeta\dd\tau\\
 &\le C\norm f_{H^s(\R^d)}^2.
\end{align*}
The last step uses $\langle\tau\rangle\le\langle\tau\omega+\zeta\rangle$.
Since the support ball is fixed, the constants are uniform in $\omega$.

In \eqref{eq:coefficient}, apply Cauchy-Schwarz to the factors
$\langle\tau\rangle^s\widehat f(\tau\omega)$ and
$|\tau|^{d-1}\langle\tau\rangle^{-s}\mathfrak r_M(\tau)$.
Together with \eqref{eq:slice} and \eqref{eq:extension}, this proves
the coefficient bound for real offsets. The bounded frequency interval
allows differentiation under the integral on compact subsets of $\C$.
The bound $|\tau|^k|e^{\ii\tau z}|\le(2M)^ke^{2M|\operatorname{Im}z|}$
then proves \eqref{eq:complex-budget}.

Define the nonnegative envelope $h_{f,M}$ by
\[
 h_{f,M}(\omega)^2=\int_{-2M}^{2M}|\tau|^{d-1}\langle\tau\rangle^{2s}
                     |\widehat f(\tau\omega)|^2\dd\tau.
\]
Signed polar coordinates give
\[
 \int_{\Sph}h_{f,M}(\omega)^2\dd S(\omega)
 =2\int_{|\xi|\le2M}\langle\xi\rangle^{2s}
                         |\widehat f(\xi)|^2\dd\xi\le C\norm{u}_{H^s(\Omega)}^2.
\]
Cauchy-Schwarz in \eqref{eq:coefficient}, now with the measure
$|\tau|^{d-1}\dd\tau$, gives the coefficient bound in
\eqref{eq:angular-envelope} with $\overline\Gamma_M$.

Appendix~\ref{app:coefficients} proves the estimates in
\eqref{eq:budget-values}, including the critical cases.

On $1/2\le\tau\le1$, $|\mathfrak r_M(\tau)|\ge c$ for large $M$,
since $\vartheta_M(\eps_M\tau)=1$ there and $F_\sigma(0)\ne0$.
Restricting the first integral in \eqref{eq:budgets} gives
$\Gamma_M\ge c\eps_M^{-1}$.
The upper bound follows from \eqref{eq:budget-values} and \eqref{eq:P}.
\end{proof}

\subsection{Continuous offset tail estimates}
To estimate the offset truncation error in $H^m(\Omega)$, we combine
activation decay with the angular envelope $h_{f,M}$. The following
kernel bounds quantify the localization near $\partial B_R$.

\begin{lemma}\label{lem:rotation}
For $\Delta\ge0$, $0<\eps\le1$, and $\lambda=\Delta/\eps$, put
\[
 z_\omega(x)=\frac{R+\Delta-\omega\cdot x}{\eps}\ge0,
 \qquad x\in B_R.
\]
For every fixed $p\in\R$ and $h\in L^2(\Sph)$,
\begin{equation}\label{eq:rotation}
 \begin{split}
 \left\|\int_{\Sph}|h(\omega)|(1+z_\omega)^p e^{-z_\omega^2}\dd S\right\|_{L^2(B_R)}
 &\le C\norm{h}_{L^2(\Sph)}\eps^{d/2}(1+\lambda)^{p-d/2}e^{-\lambda^2},\\[3pt]
 \left\|\int_{\Sph}|h(\omega)|e^{-2z_\omega}\dd S\right\|_{L^2(B_R)}
 &\le C\norm{h}_{L^2(\Sph)}\eps^{d/2}e^{-2\lambda}.
 \end{split}
\end{equation}
The constants depend only on $p,d,R$.
\end{lemma}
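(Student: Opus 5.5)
The plan is to view the left-hand side as an integral operator $T h(x)=\int_{\Sph}|h(\omega)|K(\omega,x)\dd S(\omega)$ with a nonnegative kernel. Here $K=(1+z_\omega)^pe^{-z_\omega^2}$ in the first case and $K=e^{-2z_\omega}$ in the second. I will bound its $L^2(\Sph)\to L^2(B_R)$ norm by the Schur test:
\[
 \norm{T}^2\le\Bigl(\sup_{x\in B_R}\int_{\Sph}K(\omega,x)\dd S(\omega)\Bigr)
 \Bigl(\sup_{\omega\in\Sph}\int_{B_R}K(\omega,x)\dd x\Bigr).
\]
A plain Cauchy--Schwarz in $\omega$ would only give $\eps^{(d+1)/4}$ and lose the angular localization. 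The Schur test exploits localization in both variables and should produce $\eps^{(d+1)/2}\cdot\eps^{(d-1)/2}=\eps^d$, which is the square of the claimed $\eps^{d/2}$.

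\emph{Spatial integral.} By rotation invariance I take $\omega=e_1$ and write $y=R-x_1\in[0,2R]$. Then $z=\lambda+y/\eps$, and the transverse slice of $B_R$ has $(d-1)$-volume at most $Cy^{(d-1)/2}$. Substituting $y=\eps t$ gives
\[
 \int_{B_R}K\dd x\le C\eps^{(d+1)/2}\int_0^\infty t^{(d-1)/2}k(\lambda+t)\dd t,
\]
where $k(z)=(1+z)^pe^{-z^2}$ or $k(z)=e^{-2z}$. For the Gaussian kernel I use $e^{-(\lambda+t)^2}\le e^{-\lambda^2}e^{-2\lambda t}e^{-t^2}$. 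The power is handled by $(1+\lambda+t)^p\le(1+\lambda)^p(1+t)^{|p|}$, valid for either sign of $p$. The resulting $t$-integral is $O((1+\lambda)^{-(d+1)/2})$; this comes from the scale $t\sim1/(1+\lambda)$ when $\lambda$ is large and is trivial when $\lambda$ is small. Hence the spatial integral is bounded by $C\eps^{(d+1)/2}(1+\lambda)^{p-(d+1)/2}e^{-\lambda^2}$. For the exponential kernel the same substitution gives the bound $C\eps^{(d+1)/2}e^{-2\lambda}$ directly.

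\emph{Angular integral.} Fix $x\in B_R$ and let $\theta$ be the angle between $\omega$ and $x$; if $x=0$, then every $z_\omega\ge(R+\Delta)/\eps$ and the bound is trivial. Since $|x|<R$, we have $R-\omega\cdot x\ge R(1-\cos\theta)\ge c\theta^2$, and therefore $z_\omega\ge\lambda+c\theta^2/\eps$. The integral over $\Sph$ is then at most $C\int_0^\pi\theta^{d-2}k(\lambda+c\theta^2/\eps)\dd\theta$. Monotonicity of the tail is needed here. For $p\le0$ I use $(1+z)^p\le(1+\lambda)^p$. For $p>0$ the function $(1+z)^pe^{-z^2}$ is not decreasing near $0$, so I instead use the factorized bound from the spatial step, which avoids monotonicity altogether. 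Substituting $u=\theta^2/\eps$ yields $\eps^{(d-1)/2}\int_0^\infty u^{(d-3)/2}(1+u)^{|p|}e^{-2c\lambda u-c^2u^2}\dd u$ times the factor $(1+\lambda)^pe^{-\lambda^2}$. This is $O(\eps^{(d-1)/2}(1+\lambda)^{p-(d-1)/2}e^{-\lambda^2})$. The exponent $u^{(d-3)/2}$ is integrable at $0$ because $d\ge2$. In the exponential case the analogous bound is $C\eps^{(d-1)/2}e^{-2\lambda}$.

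\emph{Conclusion and main obstacle.} Multiplying the two Schur factors gives $\eps^d(1+\lambda)^{2p-d}e^{-2\lambda^2}$ and $\eps^de^{-4\lambda}$, respectively. Taking square roots yields \eqref{eq:rotation}. All constants depend only on $p,d,R$, and the restriction $\eps\le1$ is used only to keep $\theta^2/\eps$-type substitutions uniform on the compact range $\theta\in[0,\pi]$, $y\in[0,2R]$. I expect the main care point to be the angular bound: it requires a uniform lower bound $z_\omega\ge\lambda+c\theta^2/\eps$ over all $x\in B_R$, including $x$ near the origin. It also requires extracting the sharp power $(1+\lambda)^{-(d-1)/2}$ from the cross term $2\lambda c\theta^2/\eps$, so that the $\lambda$-powers from the two Schur factors combine exactly to $(1+\lambda)^{2p-d}$.
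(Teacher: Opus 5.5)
Your proof is correct up to two small repairs noted below, and it organizes the key step differently from the paper. The paper applies weighted Cauchy--Schwarz on each sphere $\{|x|=r\}$. There, rotation invariance makes both Schur marginals equal to the single mass $\mu(r)=\int_{\Sph}K(r,\omega\cdot\theta)\dd S(\omega)$, so the norm is controlled by $\int_0^Rr^{d-1}\mu(r)^2\dd r$. The paper then computes $\mu(r)$ as a function of $A=\lambda+(R-r)/\eps$ in the cap coordinate. It splits into near cap, far cap and inner ball $r<R/2$, and integrates in $A$ with the incomplete-gamma bound \eqref{eq:gaussian-tail-integral}; that integration produces the extra factor $\eps(1+\lambda)^{-1}$. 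You instead use the global Schur test with two different marginals. The slab integral for a fixed direction yields the same extra factor $\eps(1+\lambda)^{-1}$ directly, and for fixed $x$ you only need a worst-case angular bound that is uniform over $B_R$. Since $\int_0^Rr^{d-1}\mu^2\dd r\le\sup_r\mu(r)\int_{B_R}K\dd x$, your estimate is a priori the cruder one. Nothing is lost here, however, because the kernel mass concentrates in an $\eps$-layer near $\partial B_R$, where $\mu$ attains its supremum. In exchange you avoid tracking the $r$-dependence of $\mu$, the region decomposition, and the Gaussian tail integral. Your route also never uses $\eps\le1$, since every substitution only extends integrals to $[0,\infty)$; the paper's far-cap and inner-ball absorptions do use it.

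The first repair concerns the inequality $R-\omega\cdot x\ge R(1-\cos\theta)$, which is false for obtuse $\theta$ when $|x|<R$. Use $R-|x|\cos\theta\ge\frac R2(1-\cos\theta)$ instead; for $\cos\theta<0$ the left side is at least $R$. This still gives $z_\omega\ge\lambda+c\theta^2/\eps$ uniformly on $B_R$, including $x=0$.

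The second repair is for $p>0$, where the factorized bound does not remove the need for monotonicity in the angular step. In the slab computation you have $t=z-\lambda$ exactly, but in the angular step you only know $t:=z_\omega-\lambda\ge c\theta^2/\eps$. The function $(1+t)^{|p|}e^{-2\lambda t-t^2}$ is increasing near $t=0$ when $2\lambda<|p|$, so you cannot insert the lower bound into it, as your displayed $u$-integral does. Absorb the power first, $(1+t)^{|p|}e^{-t^2}\le C_pe^{-t^2/2}$, and then use the decreasing majorant $C_pe^{-2\lambda t-t^2/2}$. The resulting integral $\int_0^\infty u^{(d-3)/2}e^{-2c\lambda u-c^2u^2/2}\dd u$ is still $O((1+\lambda)^{-(d-1)/2})$, so your conclusion stands.
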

The proof is given in Appendix~\ref{app:rotation}.

For $B\ge R$, define the truncated ridge representation
\[
 f_M^{\star,B}(x)=\int_{\Sph}\int_0^B
 A_M(\omega,b)g_{\omega,b}(x)\dd b\dd S(\omega).
\]
\begin{lemma}[Continuous offset tail estimate]\label{lem:tail}
Let $\Delta\ge0$, $B=R+\Delta$, and $\lambda=\Delta/\eps_M$.
For every integer $0\le m\le m_*$,
\begin{equation}\label{eq:tail-strong}
 \norm{f_M^\star-f_M^{\star,B}}_{H^m(\Omega)}
 \le C\overline\Gamma_M\norm{u}_{H^s(\Omega)}\eps_M^{1+d/2-m}
 \begin{cases}
 (1+\lambda)^{m-1-d/2}e^{-\lambda^2},&\sigma_G,\\
 e^{-2\lambda},&\sigma_D.
 \end{cases}
\end{equation}
For $B=B_M$, \eqref{eq:parameters} and \eqref{eq:margin} give the bound
$CM^{m-s}\norm{u}_{H^s(\Omega)}$ for this error.
\end{lemma}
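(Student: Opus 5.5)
By \cref{prop:representation}, the tail is
\[
 f_M^\star-f_M^{\star,B}=\int_{\Sph}\int_B^\infty A_M(\omega,b)\,g_{\omega,b}\dd b\dd S(\omega),
\]
and the identity may be differentiated under the integral. For a multi-index $|\alpha|=k\le m$, \eqref{eq:ridge-derivative} gives
\[
 \partial^\alpha(f_M^\star-f_M^{\star,B})(x)
 =\eps_M^{-k}\int_{\Sph}\omega^\alpha\int_B^\infty A_M(\omega,b)\,
 \sigma^{(k)}\!\Bigl(\frac{\omega\cdot x-b}{\eps_M}\Bigr)\dd b\dd S(\omega).
\]
I will first bound $|A_M(\omega,b)|\le C\overline\Gamma_M h_{f,M}(\omega)$ using \eqref{eq:angular-envelope}. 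After that the whole problem reduces to estimating the one-dimensional tail integral of $|\sigma^{(k)}|$.

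Substitute $t=(b-\omega\cdot x)/\eps_M$. For $x\in\Omega\subset B_R$ and $b\ge B$, we have $t\ge z_\omega(x)\ge0$, with $z_\omega$ as in \cref{lem:rotation} at $\Delta=B-R$ and $\eps=\eps_M$. The inner integral is then at most $\eps_M\int_{z_\omega}^\infty|\sigma^{(k)}(-t)|\dd t$. I will use the following pointwise decay bounds.
\begin{itemize}
\item For $\sigma_G$, $\sigma_G^{(k)}(t)=(-1)^kH_k(t)e^{-t^2}$, so $|\sigma^{(k)}(t)|\le C(1+|t|)^ke^{-t^2}$. This gives
\[
 \int_z^\infty|\sigma^{(k)}|\le C(1+z)^{k-1}e^{-z^2},\qquad z\ge0,
\]
because the Gaussian tail integral gains a factor $(1+z)^{-1}$.
\item For $\sigma_D$, every derivative satisfies $|\sigma_D^{(k)}(t)|\le C_ke^{-2|t|}$, since $\tanh$ approaches $\pm1$ at rate $e^{-2|t|}$. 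Hence
\[
 \int_z^\infty|\sigma^{(k)}|\le Ce^{-2z}.
\]
\end{itemize}

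This yields the pointwise bound
\[
 |\partial^\alpha(\cdots)(x)|\le C\overline\Gamma_M\,\eps_M^{1-k}\int_{\Sph}h_{f,M}(\omega)\,\kappa(z_\omega(x))\dd S,
\]
where $\kappa(z)$ is $(1+z)^{k-1}e^{-z^2}$ for $\sigma_G$ and $e^{-2z}$ for $\sigma_D$. Taking the $L^2(\Omega)\subset L^2(B_R)$ norm, I apply \cref{lem:rotation} with $p=k-1$ in the Gaussian case and with the exponential bound in the tanh-difference case. Then \eqref{eq:angular-envelope} gives $\norm{h_{f,M}}_{L^2(\Sph)}\le C\norm u_{H^s(\Omega)}$, and the resulting bound is
\[
 C\overline\Gamma_M\norm u_{H^s(\Omega)}\,\eps_M^{1+d/2-k}(1+\lambda)^{k-1-d/2}e^{-\lambda^2}\quad(\sigma_G),
\]
with $e^{-2\lambda}$ replacing the last two factors for $\sigma_D$. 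Because $\eps_M\le1$ and $k\le m$, both the power of $\eps_M$ and the power of $(1+\lambda)$ are maximized at $k=m$. Summing over $|\alpha|\le m$ therefore gives \eqref{eq:tail-strong}. The constants depend on $m\le m_*$, which is a finite range.

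For the final claim, insert \eqref{eq:parameters}. For $\sigma_G$, $\lambda=a_G\sqrt{T_M}$, so
\[
 e^{-\lambda^2}=(M+2)^{-a_G^2},\qquad \eps_M^{1+d/2-m}\approx M^{m-1-d/2}
\]
up to logarithmic factors. Using $\overline\Gamma_M$ from \eqref{eq:budget-values}, the term $M/\sqrt{T_M}$ contributes $M^{m-d/2-a_G^2}$ times logarithms, and the term $M^{d/2+2-s}/T_M$ contributes $M^{m+1-s-a_G^2}$ times logarithms. The conditions $a_G^2\ge s-d/2$ and $a_G^2\ge1$ make each at most $CM^{m-s}$, but only after the logarithmic powers are tracked.

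This bookkeeping is the step I expect to be the main obstacle. In the equality cases the logarithms must come out with nonpositive net exponent. For example, in the first term, when $a_G^2=s-d/2$ the powers of $T_M$ coming from $\eps_M^{1+d/2-m}$, from $(1+\lambda)^{m-1-d/2}$ and from $\overline\Gamma_M$ must combine with a nonpositive net exponent. I would check these exponents carefully, since this is exactly where the growth factor $(1+\lambda)^{m-1-d/2}$ is needed.

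For $\sigma_D$, $\lambda=a_DT_M$, so $e^{-2\lambda}=(M+2)^{-2a_D}$. Here the strict inequality $2a_D>\max\{s-d/2,\pi+1/2\}$ absorbs all logarithmic factors against the extra $M^{\pi}$ in $\overline\Gamma_M$. The exponent to compare is $m-s+\pi+\tfrac12-2a_D$ plus logarithms, which is at most $m-s$ under the stated condition.
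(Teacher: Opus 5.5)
Your proposal is correct and follows the paper's proof essentially step for step: the envelope bound on $A_M$, the one-dimensional tail of $|\sigma^{(k)}|$, Lemma~\ref{lem:rotation} with $p=k-1$, and domination of the lower orders by $k=m$. The logarithmic bookkeeping you deferred closes as in the paper: $\eps_M^{1+d/2-m}(1+a_G\sqrt{T_M})^{m-1-d/2}\le CM^{m-1-d/2}$ because the $T_M$-powers cancel for either sign of $1+d/2-m$, so only the factors $T_M^{-1/2}$ and $T_M^{-1}$ from $\overline\Gamma_M$ remain and equality in $a_G^2\ge\max\{s-d/2,1\}$ is allowed; for $\sigma_D$, also check the first term of $\overline\Gamma_M$, whose exponent relative to $M^{m-s}$ is $s-d/2-2a_D<0$, in addition to the exponent $\pi+1/2-2a_D$ you noted.
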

\begin{proof}
For each integer $j\ge0$ and real $t,z\ge0$, define
\[
 \Psi_{G,j}(t)=(1+t)^je^{-t^2},\qquad
 \Psi_{D,j}(t)=e^{-2t},\qquad
 \Phi_{\sigma,j}(z)=\int_z^\infty\Psi_{\sigma,j}(t)\dd t.
\]
The subscript $\sigma$ selects $G$ or $D$ according to the activation.
Hermite's formula and the derivative bounds for $\tanh$ give
$|\sigma^{(j)}(v)|\le C_j\Psi_{\sigma,j}(|v|)$ for $v\in\R$.
The tails of these majorants satisfy
\begin{equation}\label{eq:integrated-activation-tail}
 \Phi_{G,j}(z)\le C_j(1+z)^{j-1}e^{-z^2},\qquad
 \Phi_{D,j}(z)=\tfrac12e^{-2z},\qquad z\ge0.
\end{equation}
The Gaussian bound follows from \eqref{eq:gaussian-tail-integral} by rescaling.

For $|\alpha|=j\le m$ and $x\in B_R$, put
$z_\omega(x)=(B-\omega\cdot x)/\eps_M\ge0$.
Apply \eqref{eq:ridge-derivative} and \eqref{eq:angular-envelope}, then
integrate in $b$ to obtain
\[
 |\partial_x^\alpha(f_M^\star-f_M^{\star,B})(x)|
 \le C\overline\Gamma_M\eps_M^{1-j}
       \int_{\Sph}h_{f,M}(\omega)\Phi_{\sigma,j}(z_\omega(x))\dd S(\omega).
\]
Apply \eqref{eq:rotation} with $\eps=\eps_M$ and $h=h_{f,M}$, taking
$p=j-1$ in the Gaussian case. Together with
\eqref{eq:integrated-activation-tail}, \eqref{eq:angular-envelope},
and $\Omega\subset B_R$, this gives
\[
 \norm{\partial_x^\alpha(f_M^\star-f_M^{\star,B})}_{L^2(\Omega)}
 \le C\overline\Gamma_M\norm{u}_{H^s(\Omega)}\eps_M^{1+d/2-j}
 \begin{cases}
 (1+\lambda)^{j-1-d/2}e^{-\lambda^2},&\sigma_G,\\
 e^{-2\lambda},&\sigma_D.
 \end{cases}
\]
For lower orders $j<m$, the ratios to the $j=m$ bounds are
$[\eps_M/(1+\lambda)]^{m-j}\le1$ and $\eps_M^{m-j}\le1$,
respectively. Summing over $|\alpha|\le m$ proves
\eqref{eq:tail-strong}.

For $B=B_M$ and the Gaussian activation $\sigma_G$, \eqref{eq:parameters} gives
$\lambda=a_G\sqrt{T_M}$. Cancelling the powers of $T_M$ gives
\[
 \eps_M^{1+d/2-m}(1+a_G\sqrt{T_M})^{m-1-d/2}
 \le CM^{m-1-d/2}
\]
for sufficiently large $M$.
Combining \eqref{eq:budget-values} with
\eqref{eq:tail-strong} yields
\begin{equation}\label{eq:gaussian-margin-error}
 \begin{aligned}
 &\norm{f_M^\star-f_M^{\star,B_M}}_{H^m(\Omega)}\\
 &\quad\le
 C\norm{u}_{H^s(\Omega)}\bigl(M^{m-d/2}T_M^{-1/2}+M^{m+1-s}T_M^{-1}\bigr)(M+2)^{-a_G^2}.
 \end{aligned}
\end{equation}
After factoring out $M^{m-s}\allowbreak\norm{u}_{H^s(\Omega)}$, the two terms are bounded by
$CM^{s-d/2-a_G^2}\allowbreak T_M^{-1/2}$ and
$CM^{1-a_G^2}\allowbreak T_M^{-1}$, respectively. Thus
$a_G^2\ge s-d/2$ and $a_G^2\ge1$ suffice, including equality.

For $\sigma_D$, substituting $\lambda=a_DT_M$ gives
\begin{equation}\label{eq:tanh-margin-error}
 \begin{aligned}
 &\norm{f_M^\star-f_M^{\star,B_M}}_{H^m(\Omega)}\\
 &\quad\le
 C\norm{u}_{H^s(\Omega)}\bigl(M^{m-d/2}T_M^{d/2-m}
       +M^{m+1/2-s+\pi}T_M^{d/2-m-1}\bigr)(M+2)^{-2a_D}.
 \end{aligned}
\end{equation}
After factoring out $M^{m-s}\norm{u}_{H^s(\Omega)}$, the exponents of $M$ are
$s-d/2-2a_D$ and $\pi+1/2-2a_D$. Both are strictly negative by
\eqref{eq:margin}. The resulting decay absorbs
$T_M^{d/2-m}$ and $T_M^{d/2-m-1}$ even when these factors increase with $M$.
Consequently, the same margin applies to every integer $0\le m\le m_*$.
\end{proof}
\section{Discretization of the ridge representation}\label{sec:discretization}
Starting from the ridge representation in \cref{sec:representation},
we use polynomial approximation and positive spherical cubature to bound
the local Gauss-Legendre (GL) quadrature error on $[0,B_M]$.
The midpoint construction uses a joint angular and trigonometric
approximation on an auxiliary period, an estimate of the discrete exterior
tail, and a quadrature rule on the complete period.
The proof in \cref{sec:proof-main} combines these local estimates with
the frequency and continuous offset-tail bounds to establish \cref{thm:main},
including the feature count and output coefficient bound.

Fix an activation and suppress its subscript. We use a common
exactness degree for the spherical and GL offset rules:
\begin{equation}\label{eq:quadrature-degree}
 q_M=\lceil c_qM\rceil.
\end{equation}
We choose the degree constant $c_q>0$ in the proof of
\cref{lem:parameter-polynomial} and the midpoint spacing constant
$\kappa_0>0$ in \cref{sec:midpoint-completion}. All degree and
quadrature constants are independent of $u$, $M$, and $m$.
Throughout this section, $M\ge M_0$ for a sufficiently large fixed
$M_0$ such that $\eps_M\le1$ and $\Delta_M\le1/4$.

\subsection{Polynomial approximation of the integrand}
\label{sec:parameter-approximation}
To control the quadrature error in $H^m(\Omega)$, we approximate the
ridge integrand by polynomials in the direction and offset variables.
The parameter domain $\mathcal E=\Sph\times[-(R+1),R+1]$ covers
the offset ranges of both constructions. On this domain, define
\[
 \mathcal F_M(\omega,b)(x)=A_M(\omega,b)
       \sigma\!\left(\frac{\omega\cdot x-b}{\eps_M}\right).
\]
For each $(\omega,b)\in\mathcal E$, we regard
$\mathcal F_M(\omega,b)$ as an element of $H^m(\Omega)$.
The inclusion $\Omega\subset B_R$ gives
$|\omega\cdot x-b|\le T_*:=2R+1$ for $x\in\Omega$ and
$(\omega,b)\in\mathcal E$.

The following lemma gives a polynomial approximation that also controls
the derivatives.
Let $E_\rho$ be the Bernstein ellipse with foci $\pm1$ and parameter $\rho>1$.
\begin{lemma}\label{lem:compatible}
Let $m\ge0$ be an integer and let $g\in C^m([-T_*,T_*],\R)$.
Suppose that $g^{(m)}$ is analytic in and on the scaled Bernstein
ellipse $T_*E_{e^\eta}$, where $\eta>0$, and has modulus at most
$\mathcal M$ there. For every integer $\ell\ge0$, there is a real
polynomial $p_\ell$ of degree at most $\ell+m$ such that
\begin{equation}\label{eq:compatible}
 \max_{0\le j\le m}
 \norm{g^{(j)}-p_\ell^{(j)}}_{L^\infty([-T_*,T_*])}
 \le C_{m,T_*}\frac{\mathcal M e^{-\ell\eta}}{e^\eta-1}.
\end{equation}
\end{lemma}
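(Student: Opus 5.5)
Approximate $g^{(m)}$ by a Chebyshev truncation and integrate $m$ times, fixing the integration constants so that all derivatives of order $\le m$ are controlled together. Rescale to $[-1,1]$ by $G(y)=g(T_*y)$. Then $G^{(m)}(y)=T_*^m g^{(m)}(T_*y)$ is analytic in and on $E_{e^\eta}$ with modulus at most $T_*^m\mathcal M$. The classical Bernstein estimate for Chebyshev coefficients, $|a_k|\le 2T_*^m\mathcal M e^{-k\eta}$, gives for the truncation $q_{\ell}=\sum_{k<\ell}a_kT_k$ (degree at most $\ell-1\le\ell$)
\[
 \norm{G^{(m)}-q_\ell}_{L^\infty([-1,1])}\le\sum_{k\ge\ell}|a_k|
 \le 2T_*^m\mathcal M\frac{e^{-\ell\eta}}{1-e^{-\eta}}.
\]
Since $1-e^{-\eta}=e^{-\eta}(e^\eta-1)$, this equals $2T_*^m\mathcal M e^{-(\ell-1)\eta}/(e^\eta-1)$, which carries an extra factor $e^{\eta}$ compared with \eqref{eq:compatible}. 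To remove it, use the truncation $\sum_{k\le\ell}a_kT_k$, of degree $\ell$. Its tail $\sum_{k\ge\ell+1}|a_k|$ is bounded by $2T_*^m\mathcal M e^{-\ell\eta}/(e^\eta-1)$, exactly the form of \eqref{eq:compatible}. The coefficients $a_k$ are real because $g$ is real, so $G^{(m)}$ is real on $[-1,1]$.

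Next recover the lower derivatives by integration from the center. Define the real polynomial $P$ of degree at most $\ell+m$ by
\[
 P(y)=\sum_{j=0}^{m-1}\frac{G^{(j)}(0)}{j!}y^j+\int_0^y\frac{(y-t)^{m-1}}{(m-1)!}\,q(t)\dd t,
\]
where $q$ denotes the degree-$\ell$ truncation above; for $m=0$ simply take $P=q$. Taylor's formula with integral remainder gives, for $0\le j\le m$,
\[
 G^{(j)}(y)-P^{(j)}(y)=\int_0^y\frac{(y-t)^{m-1-j}}{(m-1-j)!}\bigl(G^{(m)}-q\bigr)(t)\dd t,
\]
with the obvious interpretation when $j=m$. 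On $[-1,1]$ each such integral is bounded by $\norm{G^{(m)}-q}_\infty$ times a constant depending only on $m$.

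Finally, set $p_\ell(x)=P(x/T_*)$. Then $g^{(j)}-p_\ell^{(j)}=T_*^{-j}(G^{(j)}-P^{(j)})(x/T_*)$. Combining this with the previous step gives \eqref{eq:compatible} with $C_{m,T_*}$ absorbing the factors $2T_*^{m-j}$ and the Taylor constants. The degree of $p_\ell$ is at most $\ell+m$.

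The only delicate point is the Bernstein coefficient bound at the boundary ellipse. Analyticity "in and on" $E_{e^\eta}$ allows contour integration on $E_{e^\eta}$ itself, so no shrinkage of $\eta$ is needed. The bound comes from the standard representation $a_k=\frac{1}{\pi\ii}\oint_{|w|=\rho}G^{(m)}\bigl(\tfrac{w+w^{-1}}2\bigr)w^{-k-1}\dd w$ with $\rho=e^\eta$, where the integrand is analytic on the closed annulus by hypothesis. This yields $|a_k|\le 2\rho^{-k}\sup_{E_\rho}|G^{(m)}|$.
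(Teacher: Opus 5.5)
Your proposal is correct and follows the paper's proof. You truncate the Chebyshev expansion of the $m$th derivative at degree $\ell$, where the paper cites Trefethen's Theorem~8.2 for the bound $2\mathcal M e^{-\ell\eta}/(e^\eta-1)$ that you derive directly from the coefficient estimate, and then integrate $m$ times from the origin using the Taylor data $g^{(j)}(0)$. Rescaling to $[-1,1]$ before integrating rather than after is only cosmetic and yields the same constant $C_{m,T_*}=2\max_{0\le j\le m}T_*^{m-j}/(m-j)!$.
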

The proof in Appendix~\ref{app:polynomial} uses a Chebyshev approximation
of the highest derivative, followed by integration.

Let $\mathcal P_q(\Sph)$ denote the
restrictions to the sphere of polynomials in $d$ variables with total
degree at most $q$, and let $\mathcal P_q(I)$ denote polynomials of
degree at most $q$ on an interval $I$.
Tensor products with $H^m(\Omega)$ below denote finite sums of scalar
parameter functions with coefficients in $H^m(\Omega)$.
\begin{lemma}\label{lem:parameter-polynomial}
Let $q_M$ be given by \eqref{eq:quadrature-degree}, with $c_q$ chosen as
in the proof. For every integer $0\le m\le m_*$, there is an
$H^m(\Omega)$-valued parameter polynomial
$\mathscr P_{M,m}\in\mathcal P_{q_M}(\Sph)\otimes
\mathcal P_{q_M}([-(R+1),R+1])\otimes H^m(\Omega)$ such that
\begin{equation}\label{eq:polynomial-reserve}
 \sup_{(\omega,b)\in\mathcal E}
 \norm{\mathcal F_M(\omega,b)-\mathscr P_{M,m}(\omega,b)}_{H^m(\Omega)}
 \le C\frac{M^{m-s}}{T_M^{m+1}}\norm{u}_{H^s(\Omega)}.
\end{equation}
The approximation map is linear in $u$. For each fixed $b$ with $|b|\le R+1$,
the same approximant is an angular polynomial of degree at most $q_M$,
with the error bound uniform in $b$.
\end{lemma}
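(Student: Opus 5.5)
We approximate the two factors of $\mathcal F_M(\omega,b)=A_M(\omega,b)\,\sigma((\omega\cdot x-b)/\eps_M)$ separately and multiply the two approximants.

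\emph{Offset factor.} By \cref{lem:budgets}, $A_M(\omega,\cdot)$ is entire, with
\[
|A_M(\omega,z)|\le C\Gamma_M e^{2M|\operatorname{Im}z|}\norm u_{H^s(\Omega)}
\]
uniformly in $\omega$. Chebyshev interpolation on $[-(R+1),R+1]$ uses a Bernstein ellipse of parameter $\rho$, whose imaginary semi-axis is $\asymp(R+1)\log\rho$. This gives an error $\lesssim\Gamma_M e^{2M(R+1)\log\rho}\rho^{-q}$. Choosing $\log\rho$ as a fixed constant and $q\ge c_qM$ with $c_q$ large makes the error $e^{-cM}\Gamma_M$, which is superpolynomially small. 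A cleaner alternative is to expand $e^{\ii\tau b}$ in Chebyshev polynomials of $b$ inside the integral \eqref{eq:coefficient}. Either way the approximant is linear in $u$, and its $H^m$-norm factor is harmless.

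\emph{Directional factor.} Write $\sigma((\omega\cdot x-b)/\eps_M)=g(\omega\cdot x-b)$ with $g(t)=\sigma(t/\eps_M)$ on $[-T_*,T_*]$.
\begin{itemize}
\item For $\sigma_G$, $g$ is entire with $|g(t)|\le e^{(\operatorname{Im}t)^2/\eps_M^2}$. For $\sigma_D$, $g$ is analytic in the strip $|\operatorname{Im}t|<\pi\eps_M/2$.
\item Apply \cref{lem:compatible} with $\eta\asymp\eps_M$ for $\sigma_D$ (a strip of width $\asymp\eps_M$, so $g^{(m)}$ is bounded by $C\eps_M^{-m}$ there). For $\sigma_G$ take $\eta\asymp\eps_M$ as well, where the modulus is $O(\eps_M^{-m})$.
\item This yields a polynomial $p_\ell$ of degree $\ell+m$ whose derivatives up to order $m$ are within $C\eps_M^{-m-1}e^{-c\ell\eps_M}$ of those of $g$. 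With $\ell=c_qM$ we get $e^{-c c_q T_M}=(M+2)^{-cc_q}$ for $\sigma_D$. For $\sigma_G$ the decay is better, since the ellipse can be taken larger: $\eta\asymp\sqrt{T_M}/M$ already gives $e^{-cc_q\sqrt{T_M}\,\cdot\,}$; optimize $\eta$ so that $T_*^2\eta^2/\eps_M^2\approx \ell\eta/2$.
\item The term $p_\ell(\omega\cdot x-b)$ is a polynomial in $(\omega,b)$ of total degree $\le\ell+m$, with coefficients that are polynomials in $x$, hence in $H^m(\Omega)$. Derivatives in $x$ only bring factors $\omega^\alpha$ of modulus at most one, and $|\Omega|$ is finite. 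So the $H^m$ error is the uniform $C^m$ error.
\item Splitting the degree so the total stays $\le q_M$ in each variable fixes $c_q$.
\end{itemize}

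\emph{Combining.} Write
\[
\mathcal F_M-\mathscr P=(A_M-P_A)\,g+P_A\,(g-p).
\]
The first term is bounded by $e^{-cM}\Gamma_M\eps_M^{-m}$. The second is bounded by
\[
\bigl(\Gamma_M+e^{-cM}\bigr)\,\eps_M^{-m-1}(M+2)^{-cc_q}\norm u.
\]
Since $\Gamma_M\le CM^{P_\sigma}$ and $\eps_M^{-1}\le M$, choosing $c_q$ large enough that $cc_q>P_\sigma+m_*+1+s$ yields the bound $CM^{m-s}T_M^{-m-1}\norm u$, uniformly in $b$. For fixed $b$ the approximant is a polynomial in $\omega$ of degree $\le q_M$.

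\textbf{Main obstacle.} The delicate step is the $\sigma_D$ analyticity width $\asymp\eps_M$: the ellipse parameter is only $e^{c\eps_M}$, so the convergence rate in $\ell$ is $e^{-c\ell T_M/M}$. The degree $q_M\asymp M$ therefore buys only a polynomial factor $M^{-cc_q}$, and $c_q$ must be chosen large relative to $P_\sigma$, $s$ and $m_*$ to beat the coefficient growth $\Gamma_M$ and the $\eps_M^{-m-1}$ loss.
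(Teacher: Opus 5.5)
\textbf{There is a genuine gap in your offset-factor step, and it is the central point of the lemma.} The coefficient $A_M(\omega,b)$ depends on $\omega$ as well as on $b$, through $\widehat f(\tau\omega)$ in \eqref{eq:coefficient}. The lemma requires the whole approximant to lie in $\mathcal P_{q_M}(\Sph)\otimes\mathcal P_{q_M}([-(R+1),R+1])\otimes H^m(\Omega)$. This angular polynomiality is what lets the spherical cubature integrate it exactly in \eqref{eq:positive-error}, and later in the midpoint construction.

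Chebyshev interpolation of $A_M(\omega,\cdot)$ in $b$, for each fixed $\omega$, gives an expansion $\sum_k c_k(\omega)\phi_k(b)$. Its coefficients $c_k(\omega)$ are Chebyshev coefficients of $A_M(\omega,\cdot)$, so they inherit the non-polynomial $\omega$-dependence of $\widehat f(\tau\omega)$. Your ``cleaner alternative'' of expanding $e^{\ii\tau b}$ in $b$ leaves $\widehat f(\tau\omega)$ untouched and has the same defect. Hence your closing claim is false for your construction: for fixed $b$, only the activation factor $p(\omega\cdot x-b)$ is polynomial in $\omega$, not the approximant as a whole. Labelling the activation as the ``directional factor'' hides exactly this issue.

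The missing idea is to approximate $A_M$ \emph{jointly} in $(\omega,b)$. The paper does this as follows.
\begin{enumerate}
\item Substitute $\widehat f(\tau\omega)=\int f(y)e^{-\ii\tau\omega\cdot y}\dd y$ into \eqref{eq:coefficient}.
\item Replace $e^{\ii\tau(b-\omega\cdot y)}$ by its Taylor polynomial of degree $n=\lceil4eHM\rceil$ in the combined phase $t=b-\omega\cdot y$.
\item Compact support of $f=Eu$ gives $|t|\le H=R_0+R+1$. The result is therefore a polynomial of total degree at most $n$ in $(\omega,b)$, linear in $u$.
\item The error is at most $C\eps_M^{-1}\mathcal D_MM^d2^{-n}\norm u_{H^s(\Omega)}$. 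This bound uses the sup-norm $\mathcal D_M$ of the inverse multiplier and $\norm f_{L^1}$, not only $\Gamma_M$.
\end{enumerate}
The degree count is then $n+\ell+m\le q_M$ in both variables, which is how $c_q$ is fixed. Your idea of expanding the exponential is right, but it must be applied in the combined variable $b-\omega\cdot y$.

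\textbf{A smaller point concerns the Gaussian.} With $\eta\asymp\eps_M=\sqrt{T_M}/M$, the factor $e^{-\ell\eta}$ is only $e^{-cc_q\sqrt{T_M}}$. This decays slower than every negative power of $M$, so it cannot absorb $\Gamma_M\eps_M^{-m-1}$; ``the decay is better'' is wrong as stated. Your optimization does land at the paper's scale $\eta\asymp T_M/M$. There the Hermite--Gaussian modulus grows like $M^{2m+4T_*^2}T_M^{-m}$, and $e^{-\ell T_M/M}$ with $\ell\ge C_{\rm act}M$ dominates it for a suitable constant. Your tanh-difference choice $\eta\asymp\eps_M$ matches the paper.
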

\begin{proof}
We form the product of polynomial approximations to $A_M$
and to the scaled activation. The latter approximation controls all
required derivatives with the same polynomial.

\emph{Coefficient approximation.}
On $\mathcal E\times\supp f$, the phase satisfies
$|b-\omega\cdot y|\le H:=R_0+R+1$.
Set $n=\lceil4eHM\rceil$.
Substitute $\widehat f(\tau\omega)=\int f(y)e^{-\ii\tau\omega\cdot y}\dd y$
into \eqref{eq:coefficient}, and write
$\mathsf E_n(t)=\sum_{r=0}^n(\ii t)^r/r!$. Define
\[
 P_{M,n}(\omega,b)=\frac1{(2\pi)^d\eps_M}
 \int_{-2M}^{2M}\chi(|\tau|/M)|\tau|^{d-1}\mathfrak r_M(\tau)
       \int_{\R^d}f(y)\mathsf E_n(\tau(b-\omega\cdot y))\dd y\dd\tau.
\]
Since $f$ has compact support, $P_{M,n}$ is a polynomial of total degree
at most $n$. Put $\Gamma_M^{\rm Tay}=\eps_M^{-1}\mathcal D_M M^d$,
where $\mathcal D_M$ is defined in \eqref{eq:inverse}.
For a real phase, the Taylor remainder is at most $|t|^{n+1}/(n+1)!$, which gives
\begin{equation}\label{eq:coefficient-taylor}
 \begin{aligned}
 \norm{A_M-P_{M,n}}_{L^\infty(\mathcal E)}
 &\le C\Gamma_M^{\rm Tay}\norm{u}_{H^s(\Omega)}\frac{(2MH)^{n+1}}{(n+1)!}\\
 &\le C\Gamma_M^{\rm Tay}2^{-n}\norm{u}_{H^s(\Omega)}.
 \end{aligned}
\end{equation}
The second inequality follows from $(n+1)!\ge((n+1)/e)^{n+1}$ and
$n\ge4eHM$. The multiplier estimates give
$\Gamma_M^{\rm Tay}\le CM^{d+2+\pi}$.
Combining the exponential decay of $2^{-n}$ with the lower bound
$\Gamma_M\ge c\eps_M^{-1}$ and the coefficient bound in
\cref{lem:budgets} gives
$\norm{P_{M,n}}_{L^\infty(\mathcal E)}\le C\Gamma_M\norm{u}_{H^s(\Omega)}$.
The polynomial coefficients are linear functionals of $u$.
For real $u$, conjugate symmetry in $\tau$ makes these coefficients real.

\emph{Activation approximation with derivatives.}
Set $g_M(t)=\sigma(t/\eps_M)$. For $0<\eta\le1$ and
$z\in T_*E_{e^\eta}$,
\[
 |z|\le T_*\cosh1,\qquad
 |\operatorname{Im}z|\le T_*\sinh\eta\le2T_*\eta.
\]
For the Gaussian, choose $\eta=T_M/M$. Hermite's formula gives
\[
 g_M^{(m)}(z)=(-1)^m\eps_M^{-m}H_m(z/\eps_M)e^{-z^2/\eps_M^2}.
\]
Here $H_m$ is the Hermite polynomial with leading coefficient $2^m$.
Since $|H_m(w)|\le C_m(1+|w|)^m$ and $\eps_M\le1$,
\[
 |g_M^{(m)}(z)|
 \le C\eps_M^{-2m}e^{4T_*^2\eta^2/\eps_M^2}
 =C\frac{M^{2m}}{T_M^m}e^{4T_*^2T_M}
 \le C\frac{M^{2m+4T_*^2}}{T_M^m}.
\]
Using $(e^\eta-1)^{-1}\le M/T_M$ in \cref{lem:compatible} gives
the growth exponent $\beta_G(m)=2m+4T_*^2+1$ and decay constant
$c_G=1$ in \eqref{eq:activation-polynomial} below.

For the tanh difference, set $\eta=c_D\eps_M$, where
$c_D=\pi/(16T_*)$. Then
$|\operatorname{Im}(z/\eps_M)|\le\pi/8$ throughout the ellipse.
We use the identity
\[
 |\tanh(x+\ii y)|^2
 =\frac{\sinh^2x+\sin^2y}{\sinh^2x+\cos^2y}\le1
 \qquad(|y|\le\pi/4).
\]
The circles of radius $\pi/8$ centered at $z/\eps_M\pm1$ lie in this
strip, so Cauchy's estimate gives
\[
 |\sigma_D^{(m)}(z/\eps_M)|
 \le2m!(8/\pi)^m,\qquad
 |g_M^{(m)}(z)|\le C_m\eps_M^{-m}
 =C_m\frac{M^m}{T_M^m}.
\]
Since $(e^\eta-1)^{-1}\le M/(c_DT_M)$, \cref{lem:compatible} gives
$\beta_D(m)=m+1$. For either activation and every integer $\ell\ge0$, there is a
real polynomial $p_{M,\ell,m}$ of degree at most $\ell+m$ such that
\begin{equation}\label{eq:activation-polynomial}
 \max_{0\le j\le m}
 \norm{g_M^{(j)}-p_{M,\ell,m}^{(j)}}_{L^\infty([-T_*,T_*])}
 \le C\frac{M^{\beta_\sigma(m)}}{T_M^{m+1}}e^{-c_\sigma\ell T_M/M}.
\end{equation}
The polynomial is independent of $u$.

\emph{Degree and error of the product polynomial.}
Choose $C_{\rm act}>0$ satisfying
\[
 c_GC_{\rm act}\ge P_G+s+m_*+4T_*^2+1,\qquad
 c_DC_{\rm act}\ge P_D+s+1,
\]
where $P_\sigma$ is defined in \eqref{eq:P}, and set
$\ell=\lceil C_{\rm act}M\rceil$. Choose $c_q>4eH+C_{\rm act}$.
These choices are independent of $m$. The strict inequality ensures
$q_M\ge n+\ell+m_*$ for sufficiently large $M$. Set
$\mathscr P_{M,m}=P_{M,n}p_{M,\ell,m}(\omega\cdot x-b)$.
For $|\alpha|=j\le m$, differentiation with respect to $x$ gives
$P_{M,n}\omega^\alpha p_{M,\ell,m}^{(j)}(\omega\cdot x-b)$,
whose angular degree is at most $n+j+(\ell+m-j)=n+\ell+m\le q_M$. Its offset
degree is no larger. The coefficients are polynomials in $x$ and therefore
belong to $H^m(\Omega)$. With $t=\omega\cdot x-b$,
\[
 \partial_x^\alpha(\mathcal F_M-\mathscr P_{M,m})
 =\omega^\alpha\bigl[(A_M-P_{M,n})g_M^{(j)}(t)
       +P_{M,n}(g_M^{(j)}(t)-p_{M,\ell,m}^{(j)}(t))\bigr].
\]
Using $\norm{g_M^{(j)}}_{L^\infty([-T_*,T_*])}\le C\eps_M^{-j}\le CM^m$
and summing over $|\alpha|\le m$ on the bounded domain, we obtain
\[
 \sup_{\mathcal E}\norm{\mathcal F_M-\mathscr P_{M,m}}_{H^m(\Omega)}
 \le C\norm{u}_{H^s(\Omega)}\left[\Gamma_M^{\rm Tay}M^m2^{-n}
       +\frac{M^{P_\sigma+\beta_\sigma(m)}}{T_M^{m+1}}
              e^{-c_\sigma\ell T_M/M}\right].
\]
The common choice of $C_{\rm act}$ satisfies
\[
 \begin{aligned}
 P_G+\beta_G(m)+s-m&=P_G+s+m+4T_*^2+1\le c_GC_{\rm act},\\
 P_D+\beta_D(m)+s-m&=P_D+s+1\le c_DC_{\rm act}.
 \end{aligned}
\]
Since $T_M=\log(M+2)$ and $\ell\ge C_{\rm act}M$, the second term is bounded by
\[
 CM^{m-s}T_M^{-(m+1)}\norm{u}_{H^s(\Omega)}.
\]
The exponential decay of the Taylor remainder gives the same
bound for the first term, which proves \eqref{eq:polynomial-reserve}.
\end{proof}

\subsection{Spherical cubature and Gauss-Legendre quadrature}
\label{sec:gl-quadrature}
We discretize the truncated ridge representation using positive spherical
cubature and GL offset quadrature. The spherical rule is based on the
product cubature in
\cite{DaiXu2013}.
\begin{lemma}[Positive spherical cubature]\label{lem:sphere-rule}
For every integer $q\ge1$, put $K_q=\lceil(q+1)/2\rceil$.
There is an explicit antipodal cubature rule with nodes
$\omega_\nu\in\Sph$ and weights $w_\nu>0$, $1\le\nu\le J$,
that is exact on $\mathcal P_q(\Sph)$ and satisfies
\begin{equation}\label{eq:sphere-count}
 (q+1)K_q^{d-2}\le J\le2(q+1)K_q^{d-2},\qquad
 \sum_{\nu=1}^Jw_\nu=|\Sph|.
\end{equation}
Antipodal nodes have equal weights.
\end{lemma}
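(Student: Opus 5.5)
We build the rule recursively in the dimension using the standard product decomposition of the sphere. For $d=2$, take the $q+1$ equispaced points $\omega_k=(\cos(2\pi k/(q+1)),\sin(2\pi k/(q+1)))$, $0\le k\le q$, each with weight $2\pi/(q+1)$. This rule is exact for trigonometric polynomials of degree at most $q$, and hence on $\mathcal P_q(\mathbb S^1)$. For $d\ge3$, write $\omega=(\sqrt{1-t^2}\,\omega',t)$ with $\omega'\in\mathbb S^{d-2}$ and $t\in[-1,1]$. Then $\dd S_{d-1}(\omega)=(1-t^2)^{(d-3)/2}\dd t\,\dd S_{d-2}(\omega')$. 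Take Gauss--Jacobi nodes $t_i$ with positive weights $\lambda_i$ for the weight $(1-t^2)^{(d-3)/2}$, using $K_q$ nodes so that exactness reaches degree $2K_q-1\ge q$. Form the product with the $(d-1)$-dimensional rule of degree $q$. The weights are products of positive numbers and are therefore positive.

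\emph{Exactness.} For a monomial in $\omega$ of total degree at most $q$, split off the last coordinate: $\omega_d^a\,(\omega_1,\dots,\omega_{d-1})^\beta=t^a(1-t^2)^{|\beta|/2}(\omega')^\beta$. If $|\beta|$ is odd, the inner spherical integral over $\omega'$ vanishes. The inner rule also gives zero, either by exactness on $\mathcal P_q(\mathbb S^{d-2})$ or by antipodal symmetry. If $|\beta|$ is even, the $t$-factor $t^a(1-t^2)^{|\beta|/2}$ is a polynomial of degree $a+|\beta|\le q$, which Gauss--Jacobi integrates exactly. The inner rule integrates $(\omega')^\beta$ exactly. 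By Fubini, the product rule is exact on $\mathcal P_q(\Sph)$. This is the one point needing care: one must check that odd-$|\beta|$ terms, where $(1-t^2)^{|\beta|/2}$ is not a polynomial, are killed by the inner factor. Exactness applied to the constant $1$ gives $\sum_\nu w_\nu=|\Sph|$.

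\emph{Counting and antipodality.} Before adding antipodes, the node count is $(q+1)K_q^{d-2}$ by induction. Adding the antipodal image of each node, with weights halved for the pair, preserves exactness: the reflected rule is also exact because the measure and $\mathcal P_q$ are invariant under $\omega\mapsto-\omega$, and the average of two exact rules is exact. This at most doubles the count. Merging coincident directions by adding their weights keeps positivity, keeps antipodal weights equal, and cannot reduce the count below the number of distinct nodes of the product rule. The product rule has distinct nodes: the circle points are distinct, the Jacobi nodes lie in $(-1,1)$ and are distinct, and the map $(t,\omega')\mapsto(\sqrt{1-t^2}\,\omega',t)$ is injective for $|t|<1$. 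This gives the lower bound $(q+1)K_q^{d-2}$ and the upper bound $2(q+1)K_q^{d-2}$. Antipodal weights are equal because each merged node and its antipode receive symmetric contributions from the symmetrized rule. All nodes and weights are explicit Gauss--Jacobi data, as in \cite{DaiXu2013}.
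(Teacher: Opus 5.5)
Your proposal is correct and follows the paper's proof essentially step for step: the equispaced circle rule, the recursive product with $K_q$-point Gauss--Jacobi rules, the odd/even $|\beta|$ case split for exactness, distinctness from the interior Jacobi nodes, and symmetrization by halving weights and merging coincident nodes. One small point: in the odd-$|\beta|$ case only your exactness justification is valid, because the unsymmetrized inner rule need not be antipodal (for example, $q+1$ odd equispaced points on $\mathbb S^1$), but exactness alone suffices.
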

\begin{proof}
On $\mathbb S^1$, take the $q+1$ equispaced angles
$2\pi r/(q+1)$, $0\le r\le q$, with weights $2\pi/(q+1)$.
Summing the geometric series proves exactness for trigonometric polynomials
of degree at most $q$, and hence for restrictions of algebraic polynomials.
In the step from $\mathbb S^{k-2}$ to $\mathbb S^{k-1}$,
$3\le k\le d$, write
\[
 \omega=(\omega',t)=(\sqrt{1-t^2}\,\eta,t),\qquad
 \dd S(\omega)=(1-t^2)^{(k-3)/2}\dd t\dd S(\eta),
 \quad \eta\in\mathbb S^{k-2},
\]
and combine the lower-dimensional rule with $K_q$-point Gauss-Jacobi
quadrature in $t$, with parameters $\alpha=\beta=(k-3)/2$.
The Gauss-Jacobi rule has positive weights and is exact through degree
$2K_q-1\ge q$ \cite[\S3.5(v)]{OlverLozierBoisvertClark2010}.
For a monomial $(\omega')^\gamma\omega_k^a$ of degree at most $q$,
the integral of $\eta^\gamma$ over $\mathbb S^{k-2}$ vanishes if
$|\gamma|$ is odd, and so does its quadrature sum by the induction
hypothesis. If $|\gamma|$ is even, the remaining factor is
$(1-t^2)^{|\gamma|/2}t^a$, a polynomial of degree at most $q$.
This proves exactness by induction.

The recursive product rule has
$(q+1)K_q^{d-2}$ distinct nodes because the Gauss-Jacobi nodes are
interior. Replace each $(\omega,w)$ by
$(\omega,w/2)$ and $(-\omega,w/2)$, and merge coincident nodes.
The resulting rule preserves positivity and exactness and assigns equal
weights to antipodal nodes. It retains all original nodes and has at most
twice as many nodes. Exactness for constants gives the total weight
in \eqref{eq:sphere-count}.
\end{proof}

Take $q=q_M$ and denote the resulting spherical nodes and weights by
$\{(\omega_\nu,w_\nu)\}_{\nu=1}^{J_M}$.
For GL offsets on $[0,B_M]$, set
\[
 K_M^{\rm GL}=\lceil(q_M+1)/2\rceil.
\]
Let $t_1<\cdots<t_{K_M^{\rm GL}}$ be the roots in $(-1,1)$ of the
Legendre polynomial $P_{K_M^{\rm GL}}$, normalized by
$P_{K_M^{\rm GL}}(1)=1$. For $1\le j\le K_M^{\rm GL}$,
\[
 b_j=\frac{B_M}2(1+t_j),\qquad
 v_j=\frac{B_M}{(1-t_j^2)P_{K_M^{\rm GL}}'(t_j)^2}.
\]
The affine GL formulas give nodes in $(0,B_M)$ and positive weights
\cite{OlverLozierBoisvertClark2010}.

Let $X$ be a Banach space. For a continuous map
$F:\Sph\times[0,B_M]\to X$,
define the GL tensor quadrature by
\[
 \mathcal Q_MF=\sum_{\nu=1}^{J_M}\sum_{j=1}^{K_M^{\rm GL}}
                   w_\nu v_jF(\omega_\nu,b_j).
\]
If the rule integrates an $X$-valued parameter polynomial $P$ exactly,
we subtract $P$ from the integral and the quadrature sum.
Positivity of the weights and the common total mass $B_M|\Sph|$
of the integral and quadrature sum give
\begin{equation}\label{eq:positive-error}
 \norm{\int_{\Sph}\int_0^{B_M}F\dd b\dd S-\mathcal Q_MF}_X
 \le2B_M|\Sph|\norm{F-P}_{L^\infty(\Sph\times[0,B_M],X)}.
\end{equation}
We apply this estimate with $X=H^m(\Omega)$. For fixed $M$, the finite
frequency range and boundedness of $\Omega$ justify dominated convergence
for every derivative with respect to $x$. Thus $\mathcal F_M$ is continuous
as a map into $H^m(\Omega)$.

For GL offsets, set $v_M^{\rm GL}[u]=\mathcal Q_M\mathcal F_M$.
Since $B_M\le R+1$, \cref{lem:parameter-polynomial} applies on
$\Sph\times[0,B_M]$ for every integer $0\le m\le m_*$.
The tensor GL rule integrates $\mathscr P_{M,m}$ exactly:
the spherical degree is at most $q_M$, and the offset degree is at most
$q_M\le2K_M^{\rm GL}-1$. Hence \eqref{eq:positive-error} yields
\begin{equation}\label{eq:gl-quadrature-error}
 \norm{f_M^{\star,B_M}-v_M^{\rm GL}[u]}_{H^m(\Omega)}
 \le C\frac{M^{m-s}}{T_M^{m+1}}\norm{u}_{H^s(\Omega)}.
\end{equation}

\subsection{Polynomial and trigonometric approximation}
\label{sec:periodic-approximation}
For the midpoint construction, we use the same spherical rule and
define the offset grid on $[0,B_M]$ by
\begin{equation}\label{eq:grids}
 \begin{aligned}
 K_M^{\rm eq}&=\lfloor B_MM/\kappa_0\rfloor,
       &h_M&=B_M/K_M^{\rm eq},\\
 b_j&=(j+1/2)h_M,
       &v_j&=h_M,\qquad 0\le j<K_M^{\rm eq}.
 \end{aligned}
\end{equation}
To obtain a continuous periodic extension, we symmetrize the integrand:
\[
 \mathcal H_M(\omega,b)=\tfrac12
       [\mathcal F_M(\omega,b)+\mathcal F_M(-\omega,b)].
\]
By \cref{prop:representation},
$\mathcal F_M(-\omega,b)=\mathcal F_M(\omega,-b)$, so
$\mathcal H_M$ is even in $b$ and antipodally invariant in $\omega$.
For $L\in[R+1/2,R+1]$, the truncated representation satisfies
\[
 f_M^{\star,L}=\int_{\Sph}\int_0^L\mathcal H_M(\omega,b)\dd b\dd S
       =\int_{\Sph}\int_0^L\mathcal F_M(\omega,b)\dd b\dd S.
\]
Since $\mathcal H_M(\omega,-L)=\mathcal H_M(\omega,L)$, its
$2L$-periodic extension $\mathcal Y_{M,L}$ from $[-L,L]$ is continuous.
Its endpoint derivatives need not agree. We retain their contribution
in the Fourier estimate below.
For an integer $n\ge0$, let $\mathcal T_n^L$ denote the space of
trigonometric polynomials with frequencies $\pi j/L$,
$j\in\mathbb Z$, $|j|\le n$. Set $n_b=\lceil C_bM\rceil$,
with $C_b>0$ chosen as in the proof below.

\begin{lemma}\label{lem:periodic}
For every integer $0\le m\le m_*$ and every
$L\in[R+1/2,R+1]$, there is an approximant
$\mathscr P_{M,L,m}\in\mathcal P_{q_M}(\Sph)\otimes
\mathcal T_{n_b}^L\otimes H^m(\Omega)$ such that
\begin{equation}\label{eq:periodic-reserve}
 \begin{aligned}
 &\norm{f_M^\star-f_M^{\star,L}}_{H^m(\Omega)}
 +\sup_{\substack{\omega\in\Sph\\b\in[-L,L]}}
   \norm{\mathcal H_M(\omega,b)-\mathscr P_{M,L,m}(\omega,b)}_{H^m(\Omega)}\\
 &\quad\le CM^{m-s}\norm{u}_{H^s(\Omega)}.
 \end{aligned}
\end{equation}
The constant $C$ is independent of $L$. The approximant is even in $b$,
antipodally invariant in $\omega$, and linear in $u$.
\end{lemma}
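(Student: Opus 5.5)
The plan has two parts: the tail term $\norm{f_M^\star-f_M^{\star,L}}_{H^m(\Omega)}$, which is a direct application of \cref{lem:tail}, and the construction of the joint angular–trigonometric approximant $\mathscr P_{M,L,m}$.

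\emph{Tail term.} Apply \cref{lem:tail} with $\Delta=L-R\ge1/2$, so $\lambda=\Delta/\eps_M\ge M/(2T_M)$. Both $e^{-\lambda^2}$ and $e^{-2\lambda}$ are then smaller than any power of $M$. Using $\overline\Gamma_M\le CM^{P_\sigma+1}$ from \eqref{eq:budget-values}, the right side of \eqref{eq:tail-strong} is at most $CM^{m-s}\norm{u}_{H^s(\Omega)}$, uniformly in $L\in[R+1/2,R+1]$.

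\emph{Approximant.} Start from the parameter polynomial $\mathscr P_{M,m}$ of \cref{lem:parameter-polynomial} and symmetrize it:
\[
\widetilde{\mathscr P}(\omega,b)=\tfrac12\bigl[\mathscr P_{M,m}(\omega,b)+\mathscr P_{M,m}(-\omega,b)\bigr].
\]
This approximates $\mathcal H_M$ on $\Sph\times[-L,L]$ with the error in \eqref{eq:polynomial-reserve}, and for each fixed $b$ it is an angular polynomial of degree at most $q_M$. It is not periodic in $b$, however. I would therefore decompose it in a fixed angular basis: $\widetilde{\mathscr P}(\omega,b)=\sum_k Y_k(\omega)\,\phi_k(b)$, where the $\phi_k$ are polynomials in $b$ with $H^m(\Omega)$ coefficients. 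Each $\phi_k$ is then replaced by its truncated Fourier series on the period $2L$ with $|j|\le n_b$. Since the projection onto frequencies $|j|\le n_b$ commutes with the angular structure, the result lies in $\mathcal P_{q_M}(\Sph)\otimes\mathcal T_{n_b}^L\otimes H^m(\Omega)$. Evenness in $b$ and antipodal invariance are inherited, and linearity in $u$ is clear.

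I would rather not work with the $\phi_k$ separately, because the constants in that expansion are hard to control. The cleaner route is to apply the Fourier partial-sum operator $S_{n_b}$ in $b$ directly to the periodic function $\mathcal Y_{M,L}$. Since $S_{n_b}$ acts only in $b$, it preserves angular degree, so I take
\[
\mathscr P_{M,L,m}=S_{n_b}\widetilde{\mathscr P}_{\rm per},
\]
where $\widetilde{\mathscr P}_{\rm per}$ is the periodized symmetrized polynomial. The error then splits into two pieces. The first is $S_{n_b}(\mathcal Y-\widetilde{\mathscr P}_{\rm per})$, which is bounded by the Lebesgue constant $C\log n_b$ times the sup error; this is acceptable because the factor $T_M^{-(m+1)}$ in \eqref{eq:polynomial-reserve} absorbs the $\log$. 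The second is $(I-S_{n_b})\mathcal Y$, which must be small uniformly in $\omega$.

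\emph{Main obstacle.} The hard step is bounding $(I-S_{n_b})\mathcal Y$, since $\mathcal Y$ is only continuous across $b=\pm L$. The point is that at the seam $|\omega\cdot x-b|\ge L-R\ge1/2$ while $\eps_M\to0$. Hence $\mathcal H_M$ and all its $b$-derivatives up to a high order $k$ are tiny there in $H^m(\Omega)$: at most $CM^{P_\sigma}\eps_M^{-k-m}e^{-c/\eps_M}$ for the Gaussian, and with exponential decay $e^{-2/\eps_M}$ for $\sigma_D$, which is super-polynomially small. In the interior, $\partial_b^k\mathcal H_M$ is bounded by $C(2M)^k\Gamma_M\eps_M^{-k-m}$, using \eqref{eq:complex-budget}.

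To handle both regions, integrate by parts $k$ times in the Fourier coefficients. The boundary jump terms are negligible by the seam estimate, and the remaining terms give coefficient decay $(L/\pi j)^k\norm{\partial_b^k\mathcal Y}_{L^1}$. Summing over $|j|>n_b$ with $n_b=C_bM$ then gives a bound of order $(C'/(C_b\eps_M))^k$ times polynomial factors. This blows up because $\eps_M^{-1}\gg M$ is not allowed.

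To get around this I would instead use the analyticity in $b$ more directly. For the Gaussian, $\mathcal H_M(\omega,\cdot)$ is entire. Rather than differentiating, I would exploit the product structure as in \cref{lem:parameter-polynomial}: the degree-$q_M$ polynomial in $b$ is approximated by trigonometric polynomials of degree $C_bM$ on the larger period, with the seam corrected by a smooth cutoff. This is the step I expect to require care, especially in choosing $C_b$ large relative to $c_q$ so that the trigonometric approximation of the polynomial part is exponentially accurate away from the seam.
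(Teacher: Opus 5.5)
Your architecture matches the paper's. You handle the tail term by \cref{lem:tail} with $\Delta=L-R\ge1/2$, which is fine and uniform in $L$. You set $\mathscr P_{M,L,m}=S_{n_b}^L$ applied to a symmetrized parameter polynomial, and you split the error into $(I-S_{n_b}^L)\mathcal Y_{M,L}$ plus a Lebesgue-constant term absorbed by $T_M^{-(m+1)}$. (Averaging only over $\omega\mapsto-\omega$ does not give evenness in $b$. You must also average over $b\mapsto-b$, which costs nothing since $\mathcal H_M$ is even and $\mathcal E$ is symmetric.)

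The genuine gap is the step you flag yourself: you never bound $(I-S_{n_b}^L)\mathcal Y_{M,L}$. You drop integration by parts because of a miscounted derivative bound. By Leibniz, $\partial_b^k$ of $A_M(\omega,b)\sigma((\omega\cdot x-b)/\eps_M)$ is governed by $(2M+C\eps_M^{-1})^k$ up to factorial factors, not by the product $(2M)^k\eps_M^{-k}$. Moreover $\eps_M^{-1}=M/\sqrt{T_M}$ or $M/T_M$ is \emph{smaller} than $M$, so the activation is not the bottleneck. The substitute you sketch at the end, trigonometric approximation of the degree-$q_M$ polynomial with a seam cutoff, does not estimate $(I-S_{n_b}^L)\mathcal Y_{M,L}$ at all. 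It also does not fit your definition of $\mathscr P_{M,L,m}$, and it says nothing about $\sigma_D$, which is not entire.

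The missing ingredient is quantitative analyticity in the offset on a strip of width $\eta=T_M/(8M)$. For both activations, $\mathcal H_M(\omega,\cdot)$ extends analytically with values in $H^m(\Omega)$ to $|\operatorname{Re}z|\le L$, $|\operatorname{Im}z|\le\eta$. By \eqref{eq:complex-budget} the coefficient grows only by $e^{2M\eta}=e^{T_M/4}$. The ratio $\eta/\eps_M=\sqrt{T_M}/8$ or $1/8$ keeps the scaled argument where $\sigma_G$ grows at most like $e^{T_M/32}$ and $\sigma_D$ is analytic with decay $e^{-2|v|}$. Hence the supremum $\mathfrak M$ over the rectangle is polynomial in $M$. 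The separation $|L\mp\omega\cdot x|\ge1/2$ makes the supremum $\mathfrak B$ of $\norm{\partial_z\mathcal H_M}$ on the edges $\operatorname{Re}z=\pm L$ super-polynomially small.

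Shifting the Fourier-coefficient integral by $\eta$, the horizontal segment contributes $\mathfrak M e^{-\pi|j|\eta/L}$. The vertical edges do not cancel, since $\mathcal Y_{M,L}$ is only continuous. Evenness pairs them through $\mathcal H_M(\omega,-L-\ii t)=\mathcal H_M(\omega,L+\ii t)$, which bounds their sum by $C\mathfrak B/j^2$. Summing over $|j|>n_b$ yields $C(\mathfrak M\eta^{-1}(M+2)^{-\pi C_b/(8(R+1))}+\mathfrak B/n_b)$. This is $O(M^{m-s}\norm{u}_{H^s(\Omega)})$ once $C_b$ is large, and the factor $T_M$ in $\eta$ is exactly what makes $n_b\asymp M$ sufficient. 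Your integration by parts could be rescued the same way, with $k\asymp T_M$ and Cauchy bounds $\norm{\partial_b^k\mathcal H_M}\lesssim k!\,\eta^{-k}\mathfrak M$ from this strip. Some such strip estimate is indispensable, and it is absent from your proposal.
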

\begin{proof}
\emph{Estimates on the complex rectangle and its vertical edges.}
Write $X=H^m(\Omega,\C)$ and $\eta=T_M/(8M)$. For complex offsets,
use the defining formulas for $\mathcal F_M$ and $\mathcal H_M$ with
$b=z$. By \cref{lem:budgets}, the entire coefficient function satisfies
\[
 \sup_{\omega\in\Sph}|\partial_z^r A_M(\omega,z)|
 \le C(2M)^r\Gamma_M e^{2M|\operatorname{Im}z|}\norm{u}_{H^s(\Omega)},
 \qquad r=0,1.
\]
For each fixed integer $r\ge0$ and $v,w\in\R$,
\[
 |\sigma_G^{(r)}(v+\ii w)|\le C_r e^{-v^2/2+2w^2},\qquad
 |\sigma_D^{(r)}(v+\ii w)|\le C_r e^{-2|v|}\quad(|w|\le1/8).
\]
In the Gaussian estimate, the exponential bound absorbs the Hermite
factor. For the tanh difference, we differentiate
$\sigma_D(z)=\sinh(2)/[\cosh(z+1)\cosh(z-1)]$.
The denominator has no zeros in the strip, and each derivative retains
the exponential decay.
Since $\eta/\eps_M=\sqrt{T_M}/8$ or $1/8$, respectively, these bounds
apply to $(\omega\cdot x-z)/\eps_M$ throughout the rectangle
$|\operatorname{Re}z|\le L$, $|\operatorname{Im}z|\le\eta$.
Every derivative with respect to $x$ has a locally uniform
$L^2(\Omega)$ majorant. Hence $\mathcal H_M(\omega,z)$ is analytic near
this rectangle with values in $X$. Its real-axis symmetries extend there
by analyticity.

Let $\mathfrak M$ be the supremum of
$\norm{\mathcal H_M(\omega,z)}_X$ over this rectangle and $\omega\in\Sph$,
and let $\mathfrak B$ be the supremum of
$\norm{\partial_z\mathcal H_M(\omega,L+\ii t)}_X$
over $\omega\in\Sph$ and $|t|\le\eta$.
The preceding estimates yield
\begin{equation}\label{eq:strip-budget}
 \begin{aligned}
 \mathfrak M&\le CM^{P_\sigma+m+1}\norm{u}_{H^s(\Omega)},\\
 \mathfrak B&\le C\Gamma_MM^{m+1}\norm{u}_{H^s(\Omega)}
 \begin{cases}
 e^{9T_M/32-M^2/(8T_M)},&\sigma_G,\\
 e^{T_M/4-M/T_M},&\sigma_D.
 \end{cases}
 \end{aligned}
\end{equation}
Here the complex shifts contribute $e^{2M\eta}=e^{T_M/4}$ and, for the
Gaussian, $e^{2(\eta/\eps_M)^2}=e^{T_M/32}$. The edge separation
$|L\mp\omega\cdot x|\ge1/2$ gives the negative exponents. Differentiation
with respect to the offset introduces a factor of at most $CM$, whether
the derivative acts on $A_M$ or the activation.
It follows that $\mathfrak B\le CM^{m-s}\norm{u}_{H^s(\Omega)}$.
Since $L-R\ge1/2$, integration of the real-axis decay estimate over
$b\ge L$ gives
\[
 \begin{aligned}
 \norm{f_M^\star-f_M^{\star,L}}_X
 &\le C\Gamma_MM^m\norm{u}_{H^s(\Omega)}
 \begin{cases}
 e^{-M^2/(8T_M)},&\sigma_G,\\
 e^{-M/T_M},&\sigma_D,
 \end{cases}\\
 &\le CM^{m-s}\norm{u}_{H^s(\Omega)}.
 \end{aligned}
\]

\emph{Fourier coefficients and endpoint derivatives.}
For fixed $M,L$ and $j\in\mathbb Z$, define the $X$-valued Fourier coefficient
\[
 \widehat{\mathcal Y}_{M,L}(j,\omega)=\frac1{2L}\int_{-L}^L
       \mathcal H_M(\omega,b)e^{-\ii\pi jb/L}\dd b.
\]
For $j>0$, put $\lambda_j=\pi j/L$ and shift the integration segment
downward by $\eta$.
Cauchy's theorem, applied to continuous linear functionals on $X$,
bounds the horizontal contribution by $\mathfrak M e^{-\lambda_j\eta}$.
The exponential factors on both vertical edges have phase $(-1)^j$.
For $0\le t\le\eta$, evenness gives
\[
 \mathcal H_M(\omega,-L-\ii t)=\mathcal H_M(\omega,L+\ii t),
 \qquad
 \norm{\mathcal H_M(\omega,L-\ii t)-\mathcal H_M(\omega,L+\ii t)}_X
 \le2t\mathfrak B.
\]
Their combined contribution is at most
\[
 \frac{\mathfrak B}{L}\int_0^\eta t e^{-\lambda_jt}\dd t
 \le\frac{L\mathfrak B}{\pi^2j^2}.
\]
Shifting upward for $j<0$ gives, uniformly in $L$,
\begin{equation}\label{eq:seam-fourier}
 \sup_{\omega\in\Sph}\norm{\widehat{\mathcal Y}_{M,L}(j,\omega)}_X
 \le\mathfrak M e^{-\pi|j|\eta/L}+C\mathfrak B/j^2
 \qquad(j\ne0).
\end{equation}

\emph{Fourier truncation.}
Let $S_n^L$ denote the Fourier partial-sum operator in $b$ on
$[-L,L]$, acting on $X$-valued functions with normalization $1/(2L)$.
In particular,
\[
 (S_n^L\mathcal Y_{M,L})(\omega,b)
 =\sum_{|j|\le n}\widehat{\mathcal Y}_{M,L}(j,\omega)
                       e^{\ii\pi jb/L}.
\]
For fixed $M,L$, the Fourier series converges absolutely and uniformly
in $X$ by \eqref{eq:seam-fourier}. Scalar Fourier uniqueness
\cite[Theorem~I.2.7]{Katznelson2004}, applied to each continuous linear
functional, identifies the sum pointwise with $\mathcal Y_{M,L}$.
This identification uses continuity of $\mathcal Y_{M,L}$ and separation
of points by the dual.
Choose $C_b>0$ so that
\[
 \frac{\pi C_b}{8(R+1)}>P_\sigma+s+2.
\]
This choice is independent of $m$.
Summing \eqref{eq:seam-fourier} for $|j|>n_b$ yields
\begin{equation}\label{eq:fourier-projection-tail}
 \begin{aligned}
 &\norm{\mathcal Y_{M,L}-S_{n_b}^L\mathcal Y_{M,L}}_{L^\infty(\Sph\times[-L,L],X)}\\
 &\quad\le C\bigl(\mathfrak M\eta^{-1}e^{-\pi n_b\eta/(R+1)}
                 +\mathfrak B/n_b\bigr)
 \le CM^{m-s}\norm{u}_{H^s(\Omega)}.
 \end{aligned}
\end{equation}
The factor from the geometric series is $O(\eta^{-1})=O(M/T_M)$,
so the first term is bounded by
\[
 C\frac{M^{P_\sigma+m+2}}{T_M}
       (M+2)^{-\pi C_b/[8(R+1)]}\norm{u}_{H^s(\Omega)}\le CM^{m-s}\norm{u}_{H^s(\Omega)}.
\]

\emph{Joint angular and offset approximation.}
Symmetrize the parameter polynomial from \cref{lem:parameter-polynomial}:
\[
 \mathscr R_{M,m}(\omega,b)
 =\frac14\sum_{\epsilon_1,\epsilon_2\in\{-1,1\}}
       \mathscr P_{M,m}(\epsilon_1\omega,\epsilon_2b).
\]
The sign changes preserve $\mathcal E$, and averaging
$\mathcal F_M$ in the same way gives $\mathcal H_M$. Thus $\mathscr R_{M,m}$
has both symmetries, retains angular degree at most $q_M$,
is linear in $u$, and satisfies
\[
 \norm{\mathcal H_M-\mathscr R_{M,m}}_{L^\infty(\Sph\times[-L,L],X)}
 \le C\frac{M^{m-s}}{T_M^{m+1}}\norm{u}_{H^s(\Omega)}.
\]
Set $\mathscr P_{M,L,m}=S_{n_b}^L\mathscr R_{M,m}$.
Fourier projection acts only in $b$, so the angular degree remains
at most $q_M$ and the result belongs to the stated tensor product space.

The Dirichlet kernel $D_n(\theta)=\sum_{|j|\le n}e^{\ii j\theta}$ satisfies
$\norm{D_n}_{L^1(-\pi,\pi)}\le C\log(n+2)$
\cite[Chapter~II, Section~1]{Katznelson2004}.
For a bounded strongly measurable
$V:[-L,L]\to X$, the convolution formula
\[
 (S_n^LV)(b)=\frac1{2L}\int_{-L}^L
 D_n\!\left(\frac{\pi(b-t)}L\right)V(t)\dd t
\]
and the triangle inequality for the Bochner integral give
\begin{equation}\label{eq:fourier-projection-norm}
 \norm{S_n^LV}_{L^\infty([-L,L],X)}
 \le C\log(n+2)\norm V_{L^\infty([-L,L],X)}.
\end{equation}
Rescaling by $\theta=\pi t/L$ shows that the constant is independent
of $L,n,X$.
Since $\log(n_b+2)\le CT_M$ and $\mathcal Y_{M,L}=\mathcal H_M$ on
the basic period,
\[
 \begin{aligned}
 &\norm{\mathcal H_M-\mathscr P_{M,L,m}}_{L^\infty(\Sph\times[-L,L],X)}\\
 &\quad\le\norm{\mathcal Y_{M,L}-S_{n_b}^L\mathcal Y_{M,L}}_{L^\infty(\Sph\times[-L,L],X)}\\
 &\qquad+C\log(n_b+2)
             \norm{\mathcal H_M-\mathscr R_{M,m}}_{L^\infty(\Sph\times[-L,L],X)}\\
 &\quad\le C\left(M^{m-s}+T_M\frac{M^{m-s}}{T_M^{m+1}}\right)\norm{u}_{H^s(\Omega)}
 \le CM^{m-s}\norm{u}_{H^s(\Omega)}.
 \end{aligned}
\]
The factor $T_M^{-(m+1)}$ absorbs the logarithmic projection norm,
including when $m=0$. Combining this bound with the continuous tail
estimate proves \eqref{eq:periodic-reserve}. The real, even Dirichlet
kernel preserves evenness in $b$ and real values. The construction
also preserves antipodal symmetry and linearity in $u$.
\end{proof}

\subsection{Discrete offset tail estimates}
\label{sec:discrete-tails}
We estimate the contribution of exterior midpoint nodes before removing
them from the auxiliary grid. Consider any positive integers $K<K^+$
and spacing $h_M>0$ satisfying
$B_M=Kh_M$, $L=K^+h_M\le R+1$, and $h_M\le C_h/M$,
where $C_h>0$ is fixed. With the spherical rule of degree $q_M$ from
\cref{lem:sphere-rule}, define
\[
 v_M^{\rm out}=h_M\sum_{j=K}^{K^+-1}\sum_{\nu=1}^{J_M}
             w_\nu\mathcal F_M(\omega_\nu,(j+1/2)h_M).
\]
\begin{lemma}[Discrete offset tail estimate]\label{lem:discrete-tail}
Under \eqref{eq:margin}, for every integer $0\le m\le m_*$,
\[
 \norm{v_M^{\rm out}}_{H^m(\Omega)}\le CM^{m-s}\norm{u}_{H^s(\Omega)}.
\]
The constant $C$ is uniform over grids satisfying the above conditions.
\end{lemma}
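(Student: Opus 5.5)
The plan is to reduce the discrete spherical sum to a continuous spherical integral using the exactness of the spherical rule, and then to treat the remaining discrete offset sum as in \cref{lem:tail}. I would not bound the angular sum directly. The uniform bound $|A_M|\le C\Gamma_M\norm u_{H^s(\Omega)}$ from \cref{lem:budgets}, combined with $\sum_\nu w_\nu=|\Sph|$, loses the angular localization near $\partial B_R$. The resulting estimate would require roughly $a_G^2\ge d+1/2$, which \eqref{eq:margin} does not give. The angular envelope $h_{f,M}$ cannot be used on the discrete sum either, because it is only an $L^2(\Sph)$ function and cannot be sampled at the nodes $\omega_\nu$.

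\emph{Step 1: replace the angular sum by an angular integral.} All offsets $b_j=(j+1/2)h_M$ with $K\le j<K^+$ lie in $[B_M,L]\subset[-(R+1),R+1]$. \cref{lem:parameter-polynomial} gives, for each fixed $b$, an angular polynomial $\mathscr P_{M,m}(\cdot,b)$ of degree at most $q_M$. Its error is $\sup_\omega\norm{\mathcal F_M(\omega,b)-\mathscr P_{M,m}(\omega,b)}_{H^m(\Omega)}\le CM^{m-s}T_M^{-m-1}\norm u_{H^s(\Omega)}$, uniformly in $b$. By \cref{lem:sphere-rule}, the rule is exact on $\mathcal P_{q_M}(\Sph)$ and has positive weights of total mass $|\Sph|$. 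The argument behind \eqref{eq:positive-error}, applied only in $\omega$, then gives
\[
 \Bigl\|\sum_\nu w_\nu\mathcal F_M(\omega_\nu,b_j)-\int_{\Sph}\mathcal F_M(\omega,b_j)\dd S\Bigr\|_{H^m(\Omega)}
 \le C\frac{M^{m-s}}{T_M^{m+1}}\norm u_{H^s(\Omega)}.
\]
Multiplying by $h_M$ and summing over $j$ costs a factor $(K^+-K)h_M\le L\le R+1$. Hence, up to an error $CM^{m-s}\norm u_{H^s(\Omega)}$, the quantity $v_M^{\rm out}$ equals $h_M\sum_{j=K}^{K^+-1}\int_{\Sph}\mathcal F_M(\omega,b_j)\dd S$.

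\emph{Step 2: estimate the continuous-angle, discrete-offset sum.} Fix $|\alpha|=i\le m$ and $x\in\Omega$, and put $z_\omega(x)=(B_M-\omega\cdot x)/\eps_M\ge\lambda=\Delta_M/\eps_M$. Formula \eqref{eq:ridge-derivative}, the majorants $|\sigma^{(i)}(v)|\le C\Psi_{\sigma,i}(|v|)$, and the envelope bound \eqref{eq:angular-envelope} give a pointwise bound for the derivative of the remaining term:
\[
 \Bigl|\partial_x^\alpha h_M\sum_j\int_{\Sph}\mathcal F_M(\omega,b_j)\dd S\Bigr|
 \le C\overline\Gamma_M\eps_M^{-i}\int_{\Sph}h_{f,M}(\omega)\,h_M\sum_j\Psi_{\sigma,i}\Bigl(z_\omega(x)+\frac{(j-K+1/2)h_M}{\eps_M}\Bigr)\dd S.
\]
Here $\lambda$ equals $a_G\sqrt{T_M}$ or $a_DT_M$, so it tends to infinity. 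Once $\lambda$ exceeds a constant depending only on $m_*$, each $\Psi_{\sigma,i}$ is decreasing on $[\lambda,\infty)$. A midpoint sum of a decreasing function is dominated by its integral starting from the left cell endpoint. Therefore the inner sum is at most $\eps_M\Phi_{\sigma,i}(z_\omega(x))$, and no ratio $h_M/\eps_M$ enters.

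This reproduces exactly the majorant in the proof of \cref{lem:tail}. I would then invoke \cref{lem:rotation} with $h=h_{f,M}$, together with \eqref{eq:integrated-activation-tail}, and repeat the margin computation \eqref{eq:gaussian-margin-error}--\eqref{eq:tanh-margin-error} under \eqref{eq:margin}. This yields the bound $CM^{m-s}\norm u_{H^s(\Omega)}$ for every $m\le m_*$. All constants depend only on $C_h$, $R$, $d$, $s$ and $m_*$, so they are uniform over admissible grids.

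\emph{Main obstacle.} The delicate point is Step 1: it needs the approximant to be a genuine angular polynomial of degree at most $q_M$ for each fixed $b$, with an error uniform in $b$. \cref{lem:parameter-polynomial} states exactly this for $|b|\le R+1$. Its extra factor $T_M^{-m-1}$ is not needed here, because the number of exterior offsets times $h_M$ stays bounded. A secondary check is the monotonicity of $\Psi_{G,i}$ on $[\lambda,\infty)$, which requires $M$ large enough that $\lambda\ge i$. This holds for $M\ge M_0$ after possibly enlarging $M_0$.
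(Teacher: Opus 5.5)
Your proof is correct and follows the paper's route: the same reduction of the node sum to an angular integral via \cref{lem:parameter-polynomial} and the exact positive rule, then a monotone sum--integral comparison in the offset, \cref{lem:rotation} with $h=h_{f,M}$, and the margin computations of \cref{lem:tail}.

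The only variation is in the offset comparison. The paper isolates the first midpoint and carries an extra term $h_M\Psi_{\sigma,r}(z)$, which it handles with $p=r$ in \cref{lem:rotation} and the spacing bound $h_M\le C_h/M$. Your cellwise comparison lands directly on the majorant of \cref{lem:tail}, so that spacing bound is not needed.

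Note one small correction: for a merely decreasing $\Psi_{\sigma,i}$, the midpoint value is only bounded by the average over the left half-cell. This gives $h_M\sum_j\Psi_{\sigma,i}(\cdot)\le 2\eps_M\Phi_{\sigma,i}(z_\omega(x))$, not $\eps_M\Phi_{\sigma,i}(z_\omega(x))$. The factor $1$ you state needs convexity, which $\Psi_{\sigma,i}$ also has on $[\lambda,\infty)$ for large $M$. Either constant suffices.
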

\begin{proof}
\emph{Replacing angular quadrature by integration.}
We compare $v_M^{\rm out}$ with
\[
 W_M^{\rm out}(x)=h_M\sum_{j=K}^{K^+-1}
          \int_{\Sph}\mathcal F_M(\omega,(j+1/2)h_M)(x)\dd S.
\]
Each exterior offset lies in $(B_M,L)\subset[0,R+1]$.
For each fixed offset, \cref{lem:parameter-polynomial}, positivity,
and exactness of the spherical rule bound the error by
$CM^{m-s}T_M^{-(m+1)}\norm{u}_{H^s(\Omega)}$.
Multiplying by $h_M$ and summing, with
$h_M(K^+-K)=L-B_M\le1$, gives
\begin{equation}\label{eq:outer-angular-error}
 \norm{v_M^{\rm out}-W_M^{\rm out}}_{H^m(\Omega)}
 \le C\frac{M^{m-s}}{T_M^{m+1}}\norm{u}_{H^s(\Omega)}.
\end{equation}

\emph{Bounds for the offset sum.}
For $x\in B_R$ and $\omega\in\Sph$, put
\[
 z_M=\Delta_M/\eps_M,\qquad
 z=z(x,\omega)=(B_M-\omega\cdot x)/\eps_M\ge z_M.
\]
For $r=0,\ldots,m$, use the activation majorants $\Psi_{\sigma,r}$
and their integrated tails $\Phi_{\sigma,r}$ from the proof of
\cref{lem:tail}. These satisfy
$|\sigma^{(r)}(-t)|\le C_r\Psi_{\sigma,r}(t)$ for $t\ge0$.
The logarithmic derivatives of $\Psi_{G,r}$ and $\Psi_{D,r}$ are
$r/(1+t)-2t$ and $-2$, respectively.
Since $z_M=a_G\sqrt{T_M}$ or $a_DT_M$, both majorants decrease on
$[z_M,\infty)$ for sufficiently large $M$. Hence
\begin{equation}\label{eq:positive-tail-sum}
 h_M\sum_{j=K}^{K^+-1}
 \Psi_{\sigma,r}\!\left(\frac{(j+1/2)h_M-\omega\cdot x}{\eps_M}\right)
 \le h_M\Psi_{\sigma,r}(z)+\eps_M\Phi_{\sigma,r}(z).
\end{equation}
The first midpoint contributes at most $h_M\Psi_{\sigma,r}(z)$.
Each subsequent value is bounded by the average over the preceding
interval of length $h_M/\eps_M$. The intervals are disjoint and lie
in $[z,\infty)$, which gives the second term.

\emph{The rotation-kernel estimate.}
For $|\alpha|=r\le m$, use \eqref{eq:ridge-derivative} to differentiate
with respect to $x$ and \eqref{eq:angular-envelope} to bound the coefficients.
Apply \eqref{eq:rotation} to the two terms in
\eqref{eq:positive-tail-sum}, with $\eps=\eps_M$, $\lambda=z_M$,
and $h=h_{f,M}$. By \eqref{eq:integrated-activation-tail}, the Gaussian
integral term and the term from the first midpoint require powers $p=r-1$ and $p=r$,
respectively. Since $\Omega\subset B_R$, this gives
\[
 \begin{aligned}
 &\norm{\partial_x^\alpha W_M^{\rm out}}_{L^2(\Omega)}\\
 &\quad\le C\overline\Gamma_M\norm{u}_{H^s(\Omega)}e^{-z_M^2}
 \bigl[\eps_M^{1+d/2-r}(1+z_M)^{r-1-d/2}
       +h_M\eps_M^{d/2-r}(1+z_M)^{r-d/2}\bigr].
 \end{aligned}
\]
For the tanh difference, the same rotation estimate gives
\[
 \norm{\partial_x^\alpha W_M^{\rm out}}_{L^2(\Omega)}
 \le C\overline\Gamma_M\norm{u}_{H^s(\Omega)}\eps_M^{d/2-r}(\eps_M+h_M)e^{-2z_M}.
\]
For $r<m$, the ratios to the order-$m$ bounds are
$(\eps_M/(1+z_M))^{m-r}\le1$ and $\eps_M^{m-r}\le1$
for the Gaussian and tanh difference, respectively.
Summing over $|\alpha|\le m$ therefore gives
\begin{equation}\label{eq:discrete-tail-strong}
 \begin{aligned}
 &\norm{W_M^{\rm out}}_{H^m(\Omega)}\\
 &\quad\le C\overline\Gamma_M\norm{u}_{H^s(\Omega)}\eps_M^{1+d/2-m}\\[2pt]
 &\qquad\times\begin{cases}
 (1+z_M)^{m-1-d/2}e^{-z_M^2}
       \Bigl[1+\dfrac{h_M(1+z_M)}{\eps_M}\Bigr],&\sigma_G,\\[3pt]
 e^{-2z_M}\Bigl[1+\dfrac{h_M}{\eps_M}\Bigr],&\sigma_D.
 \end{cases}
 \end{aligned}
\end{equation}
Both bracketed factors are uniformly bounded since
\[
 h_M(1+z_M)/\eps_M\le C_h(T_M^{-1/2}+a_G)\quad(\sigma_G),\qquad
 h_M/\eps_M\le C_h/T_M\quad(\sigma_D).
\]

Up to these bounded factors, \eqref{eq:discrete-tail-strong} has the
same right-hand side as \eqref{eq:tail-strong} with $\lambda=z_M$.
The substitutions in \eqref{eq:gaussian-margin-error}
and \eqref{eq:tanh-margin-error}, together with \eqref{eq:margin}, therefore give
$\norm{W_M^{\rm out}}_{H^m(\Omega)}\le CM^{m-s}\norm{u}_{H^s(\Omega)}$.
Combining this bound with \eqref{eq:outer-angular-error} proves the lemma.
\end{proof}

\subsection{Midpoint quadrature on the auxiliary period}
\label{sec:midpoint-completion}
We extend the prescribed offset grid to a complete period without
changing its spacing.
Choose $0<\kappa_0<(2R+1)/C_b$, where $C_b$ is fixed in the proof of
\cref{lem:periodic}.
Let $K=K_M^{\rm eq}$ and $h_M$ be as in \eqref{eq:grids}.
The floor bounds, with $K=\lfloor B_MM/\kappa_0\rfloor$ and $B_M\ge R>0$, give
\[
 \frac{\kappa_0}{M}\le h_M\le\frac{2\kappa_0}{M},\qquad
 \frac1{h_M}=\frac K{B_M}=\frac M{\kappa_0}+O(1)
\]
for sufficiently large $M$. Set
\[
 K^+=\left\lceil\frac{R+1/2}{h_M}\right\rceil,\qquad L=K^+h_M.
\]
Since $\Delta_M\le1/4$ and $h_M\le1/4$,
$B_M<R+1/2\le L<R+1$ and $K^+>K$. Thus
\cref{lem:periodic,lem:discrete-tail} apply, with $C_h=2\kappa_0$
in the latter. The ceiling bounds also give
\[
 2K^+=\frac{2R+1}{h_M}+O(1)
      =\frac{2R+1}{\kappa_0}M+O(1)>n_b
\]
for sufficiently large $M$, by the choice of $\kappa_0$.

Set $t_\ell=-L+(\ell+1/2)h_M$ for $0\le\ell<2K^+$.
For every nonzero integer $k$ with $|k|\le n_b$,
\[
 h_M\sum_{\ell=0}^{2K^+-1}e^{\ii\pi kt_\ell/L}
 =h_M e^{\ii\pi k(-L+h_M/2)/L}
      \sum_{\ell=0}^{2K^+-1}e^{2\pi\ii k\ell/(2K^+)}=0.
\]
For $k=0$, the sum is $2L$, so this rule integrates $\mathcal T_{n_b}^L$ exactly.
Evenness gives
\[
 f_M^{\star,L}=\frac12\int_{\Sph}\int_{-L}^L
                         \mathcal H_M(\omega,b)\dd b\dd S.
\]
Define the approximation on this period by
\[
 v_M^L=\frac{h_M}2\sum_{\nu=1}^{J_M}w_\nu
                  \sum_{\ell=0}^{2K^+-1}\mathcal H_M(\omega_\nu,t_\ell).
\]
For every integer $0\le m\le m_*$, this tensor rule integrates
$\mathscr P_{M,L,m}$ exactly and has total mass $L|\Sph|$.
The positivity argument in \eqref{eq:positive-error} bounds the error
by twice this mass times the uniform approximation error.
Thus \cref{lem:periodic} gives
\begin{equation}\label{eq:midpoint-quadrature-error}
 \norm{f_M^{\star,L}-v_M^L}_{H^m(\Omega)}
 \le CM^{m-s}\norm{u}_{H^s(\Omega)}.
\end{equation}
Evenness in $b$ allows us to pair the midpoints and sum over positive
offsets. Equal weights at antipodal directions then give
\[
 \begin{aligned}
 v_M^L
 &=h_M\sum_{\nu=1}^{J_M}w_\nu\sum_{j=0}^{K^+-1}
                     \mathcal H_M(\omega_\nu,(j+1/2)h_M)\\
 &=h_M\sum_{\nu=1}^{J_M}w_\nu\sum_{j=0}^{K^+-1}
                     \mathcal F_M(\omega_\nu,(j+1/2)h_M).
 \end{aligned}
\]
\subsection{Proof of Theorem~\ref{thm:main}}
\label{sec:proof-main}
\begin{proof}[Proof of \cref{thm:main}]
\emph{Gauss-Legendre offsets.}
The network $v_M^{\rm GL}[u]$ from \cref{sec:gl-quadrature} satisfies
\[
 u-v_M^{\rm GL}[u]
  =(u-f_M^\star)+(f_M^\star-f_M^{\star,B_M})
                    +(f_M^{\star,B_M}-v_M^{\rm GL}[u]).
\]
By \cref{lem:frequency,lem:tail} and \eqref{eq:gl-quadrature-error},
the sum of the three $H^m(\Omega)$ norms is at most
$CM^{m-s}\norm{u}_{H^s(\Omega)}$ for every integer $0\le m\le m_*$.

\emph{Midpoint offsets.}
Use the auxiliary grid from \cref{sec:midpoint-completion}, with
$K=K_M^{\rm eq}$.
Its first $K$ positive offsets are exactly the prescribed nodes in
\eqref{eq:grids}. The expression for $v_M^L$ in terms of positive offsets
and the definition of $v_M^{\rm out}$ give
\[
 \begin{aligned}
 v_M^{\rm eq}[u]
 &=h_M\sum_{\nu=1}^{J_M}w_\nu\sum_{j=0}^{K-1}
                     \mathcal F_M(\omega_\nu,(j+1/2)h_M)\\
 &=v_M^L-v_M^{\rm out}.
 \end{aligned}
\]
Consequently
\[
 u-v_M^{\rm eq}[u]=(u-f_M^\star)+(f_M^\star-f_M^{\star,L})
                  +(f_M^{\star,L}-v_M^L)+v_M^{\rm out}.
\]
By \cref{lem:frequency,lem:periodic,lem:discrete-tail} and
\eqref{eq:midpoint-quadrature-error}, the sum of the four $H^m(\Omega)$ norms is at most
$CM^{m-s}\norm{u}_{H^s(\Omega)}$ for every integer $0\le m\le m_*$.
This gives the $M^{m-s}$ estimate in \eqref{eq:main-error} for both rules.

\emph{Feature count and output coefficients.}
The relation $q_M\asymp M$ and \eqref{eq:sphere-count}
give $J_M\asymp M^{d-1}$. The two offset counts satisfy
\[
 K_M^{\rm GL}=\lceil(q_M+1)/2\rceil\asymp M,\qquad
 K_M^{\rm eq}=\lfloor B_MM/\kappa_0\rfloor\asymp M,
\]
where the second comparison uses $R\le B_M\le R+1$.
Writing $K_M$ for the count of the chosen rule, the final network has
\[
 N_M=J_MK_M\asymp M^d.
\]
Consequently $M^{m-s}\le CN_M^{-(s-m)/d}$, which gives the second
inequality in \eqref{eq:main-error} for both rules.
With the midpoint offsets relabeled as $b_j=(j-1/2)h_M$,
$1\le j\le K_M$, both constructions have the form \eqref{eq:network}, with
\[
 c_{\nu j,M}(u)=w_\nu v_jA_M[u](\omega_\nu,b_j).
\]
Both prescribed rules have total weight
$\sum_{\nu,j}w_\nu v_j=B_M|\Sph|\le(R+1)|\Sph|$.
Thus \cref{lem:budgets} yields
\[
 \begin{aligned}
 \sum_{\nu=1}^{J_M}\sum_{j=1}^{K_M}|c_{\nu j,M}(u)|
 &\le B_M|\Sph|\sup_{\substack{\omega\in\Sph\\b\in[0,B_M]}}
                         |A_M[u](\omega,b)|\\
 &\le C\Gamma_M\norm{u}_{H^s(\Omega)}
 \le CM^{P_\sigma}\norm{u}_{H^s(\Omega)},
 \end{aligned}
\]
which proves \eqref{eq:output-budget}.
The fixed extension and multipliers are linear, so every output
coefficient is linear in $u$. The preceding bound gives continuity,
so $v_M$ is a bounded linear map into $V_M$.
The parameter approximants used in the error estimates may vary with $m$.
The nodes, weights, and $A_M$ are independent of $m$.
Taking the maximum of the constants and thresholds over
$m=0,\ldots,m_*$ gives all Sobolev estimates for the same network.

\emph{A prescribed feature budget.}
Choose a fixed $C_1$ such that $N_M\le C_1M^d$ for all
$M\ge M_0$. For a sufficiently large feature budget $N$, set
$M=\lfloor(N/C_1)^{1/d}\rfloor$. Then $M\ge M_0$, $N_M\le N$, and
$M\asymp N^{1/d}$. Since $m-s<0$, the error bound becomes
$\norm{u-v_M[u]}_{H^m(\Omega)}\le CN^{-(s-m)/d}\norm{u}_{H^s(\Omega)}$.
\end{proof}

\begingroup
\raggedbottom
\widowpenalty=10000
\setlength{\floatsep}{10pt plus 2pt minus 2pt}
\setlength{\textfloatsep}{12pt plus 2pt minus 2pt}
\setlength{\intextsep}{12pt plus 2pt minus 2pt}
\section{Numerical experiments}\label{sec:numerics}

We examine $L^2$ and $H^1$ approximation on $\Omega=(-1,1)^d$, $d=2,3$,
using the Fourier benchmarks of Fu and Wang~\cite{fu2026optimal}.

\subsection{Experimental setup}
For $y\in[0,1]^d$, define
\begin{equation}\label{eq:fourier-benchmark}
 \widetilde g_{s,\Lambda}(y)=
 \sum_{n\in\Lambda}
 \frac{a_n\cos(2\pi n\cdot y)+b_n\sin(2\pi n\cdot y)}
 {(1+|n|^2)^{(s+d/2)/2}\log(2+|n|)},
\end{equation}
where $a_n,b_n$ are independent standard Gaussian random variables.
The frequency set $\Lambda$ consists of nonzero integer vectors $n$
that satisfy $\max_{1\le j\le d}|n_j|\le32$ and whose first nonzero
component is positive.
Set $Z_{s,\Lambda}=\norm{\widetilde g_{s,\Lambda}}_{L^2([0,1]^d)}$
and define the normalized target
\begin{equation}\label{eq:numerical-target-normalization}
 u_{s,\Lambda}(x)=2^{-d/2}Z_{s,\Lambda}^{-1}
 \widetilde g_{s,\Lambda}\bigl((x+\boldsymbol1)/2\bigr),
 \qquad \norm{u_{s,\Lambda}}_{L^2(\Omega)}=1.
\end{equation}
Here $\boldsymbol1=(1,\ldots,1)\in\R^d$.
For each $s=2,4,6$, we generate three targets with distinct random seeds.
Within each dimension, a given seed produces the same Gaussian
coefficients for all values of $s$. Each target is a finite Fourier sum
and hence smooth. Thus $s$ specifies spectral decay rather than exact
Sobolev regularity.
We use the normalization in \eqref{eq:numerical-target-normalization}
for both the $L^2$ and $H^1$ experiments.

For this domain, $R=\sqrt d$. We set $T_M=\log(M+2)$ and use the scales
and offset margins in \eqref{eq:parameters}, with
\begin{equation}\label{eq:numerical-margins}
 a_G=\sqrt{\max\{s-d/2,1\}},\qquad
 a_D=\tfrac12\max\{s-d/2,\pi+\tfrac12\}+0.1.
\end{equation}
The resulting offset bound $B_M=\sqrt d+\Delta_M$ depends on the
dimension, $s$, and the activation. For $d=2$, we use $J=4M$
equispaced directions on the circle at $M\in\{3,4,6,8,10,12\}$. For $d=3$,
we combine $4M$ equispaced longitude nodes with $2M$ GL nodes
in the height variable on $[-1,1]$. This product rule gives $J=8M^2$ directions at
$M\in\{4,5,6,7,8,9\}$. Both direction sets contain the antipode of
every node. For either offset rule, we use $K=2M$ nodes when $d=2$
and $K=3M$ when $d=3$. The corresponding feature counts are
$N=JK=8M^2$ and $24M^3$. We map the GL nodes affinely from $(-1,1)$
to $(0,B_M)$ and take the equispaced offsets to be the midpoints
$b_j=(j+\tfrac12)B_M/K$, $0\le j<K$.
The direction and offset counts are independent of $s$, the activation,
and the offset rule.

The theoretical construction defines the output coefficients through the
Fourier representation. In the experiments, we compute these coefficients
by weighted least squares, with a separate fit for each error norm. The $L^2$ fit uses function
values. The $H^1$ fit also includes all $d$ first derivatives with respect
to the spatial coordinates and uses the full norm
\begin{equation}\label{eq:numerical-h1-norm}
 \norm{v}_{H^1(\Omega)}^2
 =\int_\Omega\left(|v(x)|^2+
       \sum_{\ell=1}^{d}|\partial_{x_\ell}v(x)|^2\right)\dd x.
\end{equation}
Let $v_{M,m}^{\rm LS}$ denote the approximant fitted in $H^m(\Omega)$,
$m=0,1$. We evaluate its relative error on a separate tensor-product GL grid:
\begin{equation}\label{eq:numerical-relative-error}
 E_m=\frac{\norm{u_{s,\Lambda}-v_{M,m}^{\rm LS}}_{H^m(\Omega)}}
               {\norm{u_{s,\Lambda}}_{H^m(\Omega)}},\qquad m=0,1,
\end{equation}
where $H^0(\Omega)=L^2(\Omega)$.
The eight plots show the median error $E_{m,\rm med}$ and empirical
interquartile range over the three targets. The dashed segments have
slope $-(s-m)/d$ on logarithmic axes in $N$ and are positioned slightly
above the error bands over the last three resolutions. We estimate the
empirical convergence order $p$ by fitting
$\log E_{m,\rm med}=\alpha-p\log N$ over the last four resolutions:
$M=6,8,10,12$ for $d=2$ and $M=6,7,8,9$ for $d=3$.
The fitted orders describe error decay over this finite range of resolutions.

\Needspace{7\baselineskip}
\subsection{Two-dimensional results}
Figure~\ref{fig:results-2d} shows comparable error decay for Gaussian
$L^2$ approximation with GL and equispaced offsets. The fitted orders
range from $1.150$ to $1.157$, $2.099$ to $2.109$, and $2.967$ to $3.251$
for $s=2,4,6$, respectively. The two rules give similar orders for
$s=2,4$, whereas for $s=6$ their orders lie on opposite sides of the
reference value $3$.

\begin{table}[!htb]
\caption{Median relative errors $E_{m,\rm med}$ at $M=12$ ($N=1152$) and
fitted orders $p$ over $M=6,8,10,12$ for $d=2$. EQ denotes equispaced
midpoint offsets. For $s=2,4,6$, the reference orders $(s-m)/d$ are
$1,2,3$ in $L^2$ and $1/2,3/2,5/2$ in $H^1$.}
\label{tab:numerical-orders}
\centering
\begingroup
\footnotesize
\setlength{\tabcolsep}{3pt}
\renewcommand{\arraystretch}{1.05}
\begin{tabular}{@{}l*{3}{rc}@{}}
\toprule
Norm, activation, rule & \multicolumn{2}{c}{$s=2$} &
\multicolumn{2}{c}{$s=4$} & \multicolumn{2}{c}{$s=6$} \\
\cmidrule(lr){2-3}\cmidrule(lr){4-5}\cmidrule(l){6-7}
 & $E_{m,\rm med}$ & $p$ & $E_{m,\rm med}$ & $p$ & $E_{m,\rm med}$ & $p$ \\
\midrule
$L^2$, Gaussian, GL & $5.170\times10^{-3}$ & 1.157 & $1.004\times10^{-4}$ & 2.099 & $4.298\times10^{-6}$ & 2.967 \\
$L^2$, Gaussian, EQ & $5.336\times10^{-3}$ & 1.150 & $1.018\times10^{-4}$ & 2.109 & $2.199\times10^{-6}$ & 3.251 \\
\midrule
$H^1$, Gaussian, EQ & $5.175\times10^{-2}$ & 0.669 & $9.776\times10^{-4}$ & 1.658 & $2.175\times10^{-5}$ & 2.843 \\
$H^1$, Tanh difference, EQ & $5.588\times10^{-2}$ & 0.615 & $1.099\times10^{-3}$ & 1.566 & $3.448\times10^{-5}$ & 2.508 \\
\bottomrule
\end{tabular}
\endgroup
\end{table}

\begin{figure}[!htb]
 \centering
 \begin{minipage}[t]{0.49\linewidth}
  \centering
  \includegraphics[width=\linewidth]{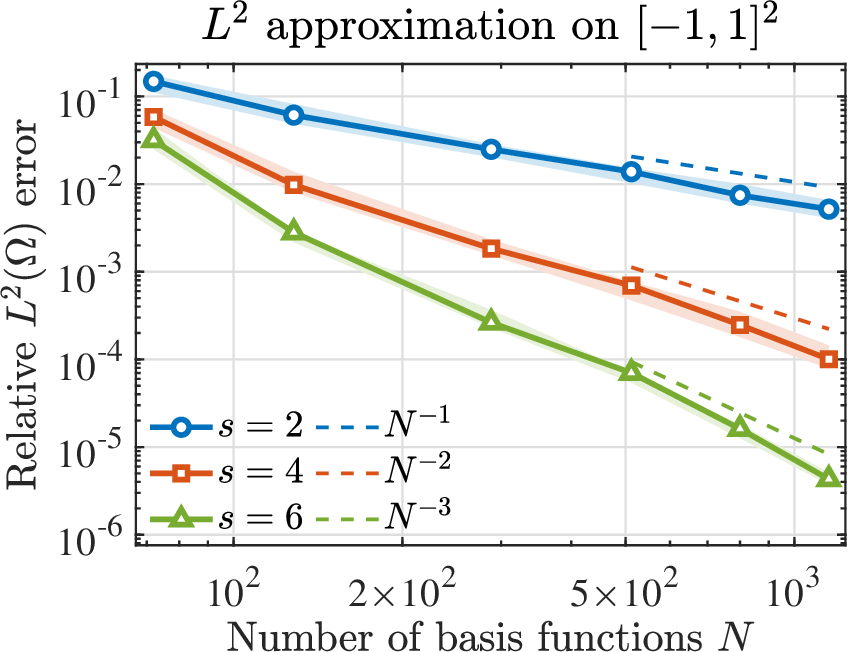}\\[2pt]
  {\small (a) GL offsets}
 \end{minipage}\hfill
 \begin{minipage}[t]{0.49\linewidth}
  \centering
  \includegraphics[width=\linewidth]{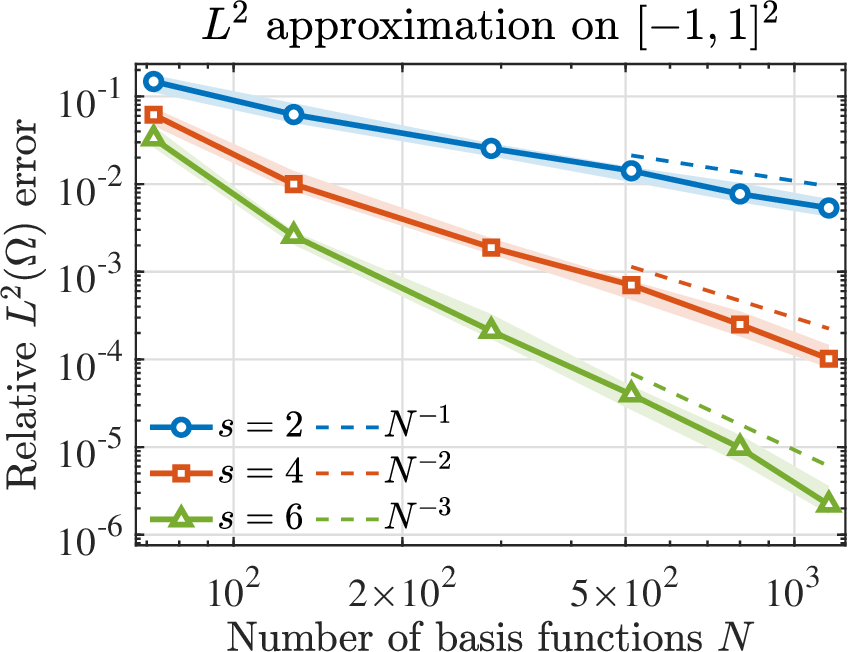}\\[2pt]
  {\small (b) Equispaced offsets}
 \end{minipage}
 \caption{Gaussian $L^2$ approximation on $[-1,1]^2$ with GL (left) and
 equispaced (right) offsets. Curves show median relative errors for
 $s=2,4,6$, with interquartile bands over three targets. Dashed segments
 show $N^{-s/2}$ reference decay. Here $J=4M$ and $K=2M$.}
 \label{fig:results-2d}
\end{figure}

Figure~\ref{fig:results-h1-2d} compares the $H^1$ errors for Gaussian
and tanh-difference activations with equispaced offsets. For $s=2,4,6$,
the fitted orders range from $0.615$ to $0.669$, $1.566$ to $1.658$,
and $2.508$ to $2.843$, respectively. The reference orders are
$1/2$, $3/2$, and $5/2$.

\begin{figure}[!htb]
 \centering
 \begin{minipage}[t]{0.49\linewidth}
  \centering
  \includegraphics[width=\linewidth]{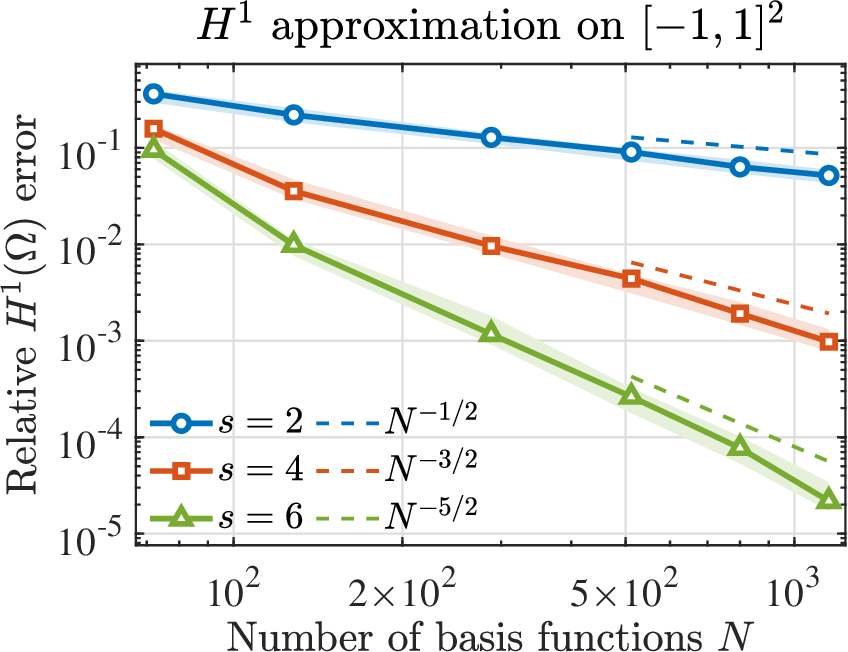}\\[2pt]
  {\small (a) Gaussian activation}
 \end{minipage}\hfill
 \begin{minipage}[t]{0.49\linewidth}
  \centering
  \includegraphics[width=\linewidth]{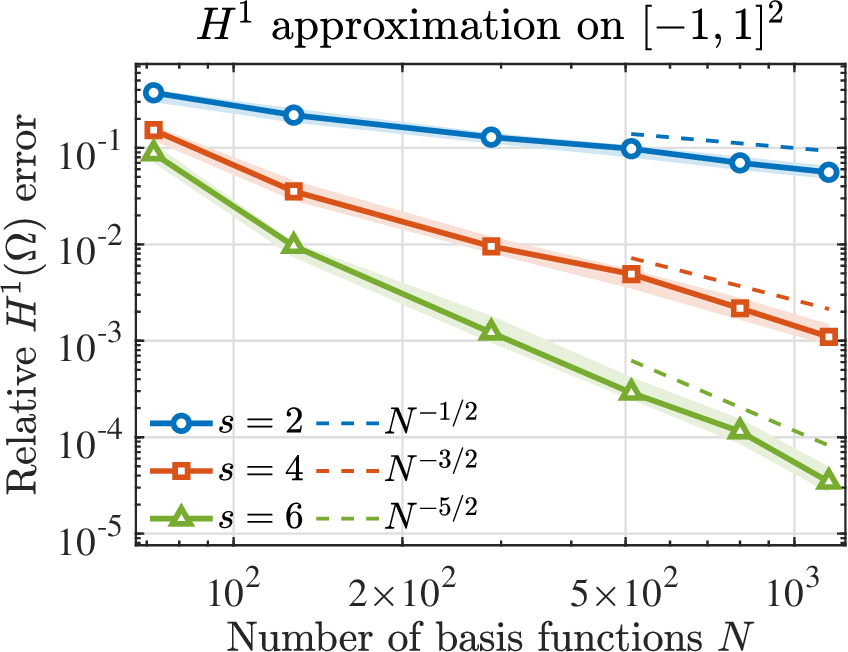}\\[2pt]
  {\small (b) Tanh-difference activation}
 \end{minipage}
 \caption{$H^1$ approximation on $[-1,1]^2$ with equispaced offsets, using
 Gaussian (left) and tanh-difference (right) activations. Curves show
 median relative errors for $s=2,4,6$, with interquartile bands over three
 targets. Dashed segments show $N^{-(s-1)/2}$ reference decay. Here $J=4M$ and
 $K=2M$.}
 \label{fig:results-h1-2d}
\end{figure}

\subsection{Three-dimensional results}
Figure~\ref{fig:results-3d} compares tanh-difference $L^2$ approximation
with GL and equispaced offsets. The fitted orders range from $0.774$
to $0.801$, $1.493$ to $1.512$, and $2.171$ to $2.178$ for $s=2,4,6$,
respectively. The reference orders are $2/3$, $4/3$, and $2$.

\begin{table}[!htb]
\caption{Median relative errors $E_{m,\rm med}$ at $M=9$ ($N=17496$) and
fitted orders $p$ over $M=6,7,8,9$ for $d=3$. EQ denotes equispaced
midpoint offsets. For $s=2,4,6$, the reference orders $(s-m)/d$ are
$2/3,4/3,2$ in $L^2$ and $1/3,1,5/3$ in $H^1$.}
\label{tab:numerical-orders-3d}
\centering
\begingroup
\footnotesize
\setlength{\tabcolsep}{3pt}
\renewcommand{\arraystretch}{1.05}
\begin{tabular}{@{}l*{3}{rc}@{}}
\toprule
Norm, activation, rule & \multicolumn{2}{c}{$s=2$} &
\multicolumn{2}{c}{$s=4$} & \multicolumn{2}{c}{$s=6$} \\
\cmidrule(lr){2-3}\cmidrule(lr){4-5}\cmidrule(l){6-7}
 & $E_{m,\rm med}$ & $p$ & $E_{m,\rm med}$ & $p$ & $E_{m,\rm med}$ & $p$ \\
\midrule
$L^2$, Tanh difference, GL & $8.547\times10^{-3}$ & 0.801 & $2.741\times10^{-4}$ & 1.512 & $1.123\times10^{-5}$ & 2.178 \\
$L^2$, Tanh difference, EQ & $8.909\times10^{-3}$ & 0.774 & $2.819\times10^{-4}$ & 1.493 & $1.135\times10^{-5}$ & 2.171 \\
\midrule
$H^1$, Tanh difference, GL & $7.418\times10^{-2}$ & 0.433 & $2.162\times10^{-3}$ & 1.158 & $9.186\times10^{-5}$ & 1.845 \\
$H^1$, Gaussian, GL & $7.283\times10^{-2}$ & 0.443 & $2.120\times10^{-3}$ & 1.174 & $8.976\times10^{-5}$ & 1.862 \\
\bottomrule
\end{tabular}
\endgroup
\end{table}

\begin{figure}[!htb]
 \centering
 \begin{minipage}[t]{0.49\linewidth}
  \centering
  \includegraphics[width=\linewidth]{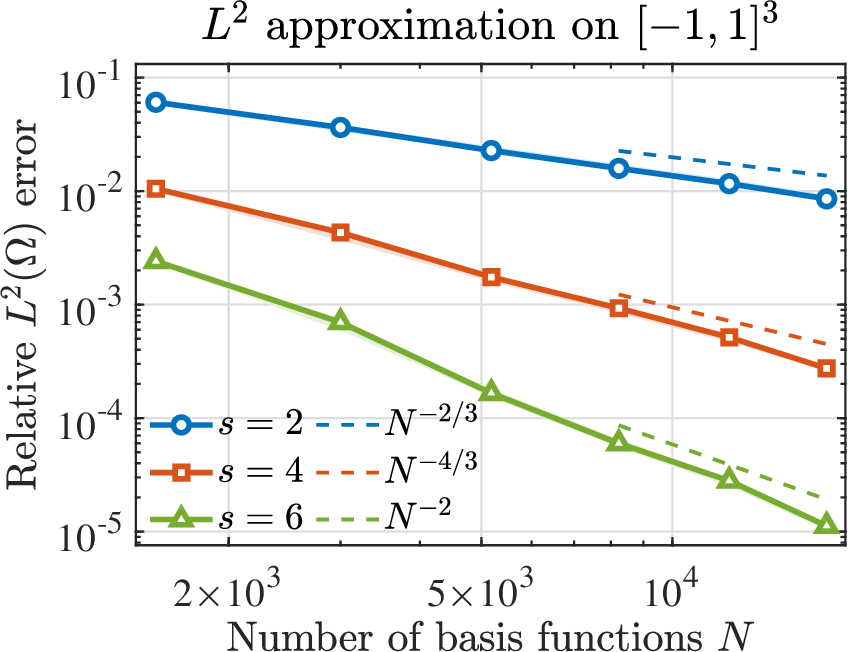}\\[2pt]
  {\small (a) GL offsets}
 \end{minipage}\hfill
 \begin{minipage}[t]{0.49\linewidth}
  \centering
  \includegraphics[width=\linewidth]{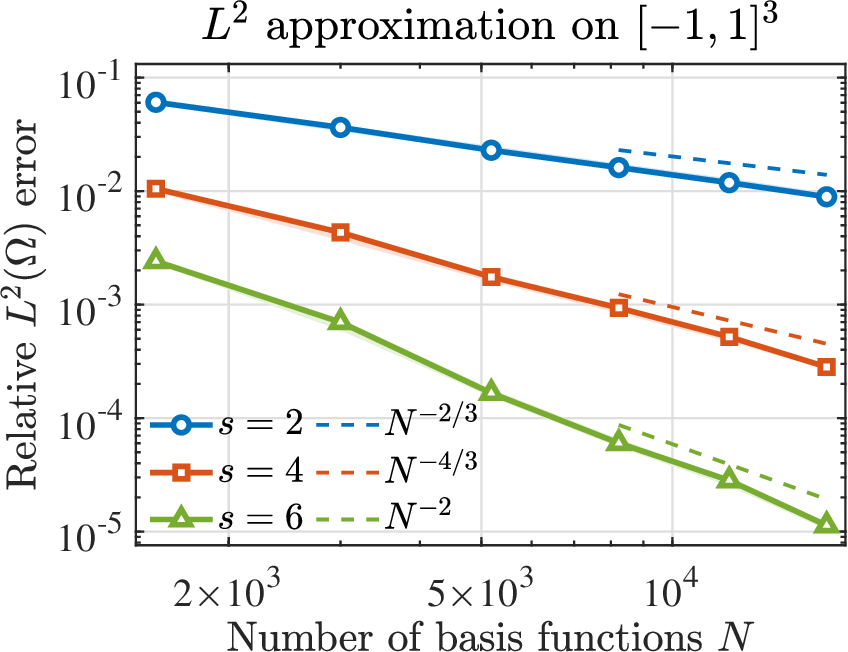}\\[2pt]
  {\small (b) Equispaced offsets}
 \end{minipage}
 \caption{Tanh-difference $L^2$ approximation on $[-1,1]^3$ with GL
 (left) and equispaced (right) offsets. Curves show median relative errors
 for $s=2,4,6$, with interquartile bands over three targets. Dashed segments
 show $N^{-s/3}$ reference decay. Here $J=8M^2$ and $K=3M$.}
 \label{fig:results-3d}
\end{figure}

Figure~\ref{fig:results-h1-3d} compares the $H^1$ errors for
tanh-difference and Gaussian activations with GL offsets. The fitted
orders range from $0.433$ to $0.443$ for $s=2$, $1.158$ to $1.174$
for $s=4$, and $1.845$ to $1.862$ for $s=6$. The reference orders are
$1/3$, $1$, and $5/3$, respectively.

\begin{figure}[!htb]
 \centering
 \begin{minipage}[t]{0.49\linewidth}
  \centering
  \includegraphics[width=\linewidth]{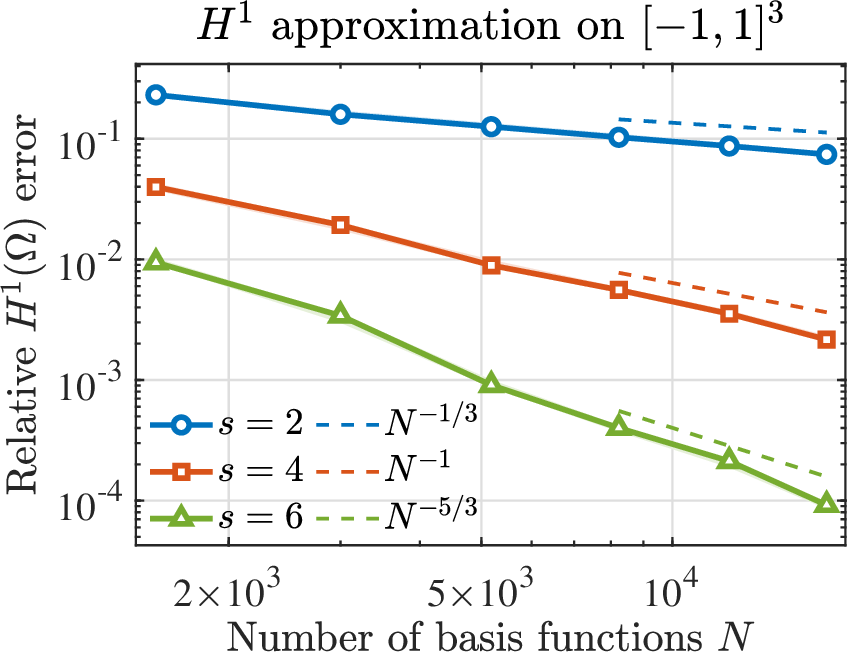}\\[2pt]
  {\small (a) Tanh-difference activation}
 \end{minipage}\hfill
 \begin{minipage}[t]{0.49\linewidth}
  \centering
  \includegraphics[width=\linewidth]{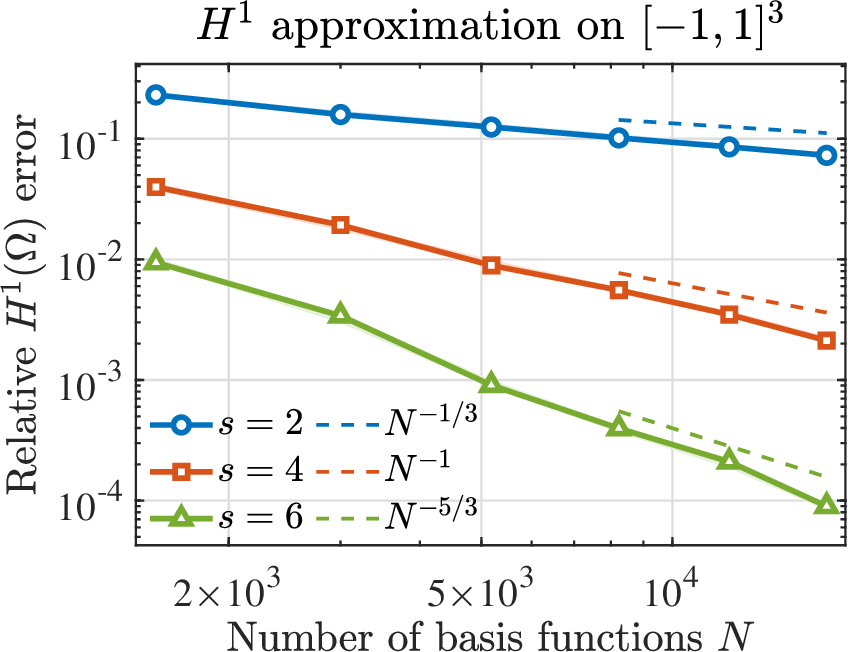}\\[2pt]
  {\small (b) Gaussian activation}
 \end{minipage}
 \caption{$H^1$ approximation on $[-1,1]^3$ with GL offsets, using
 tanh-difference (left) and Gaussian (right) activations. Curves show
 median relative errors for $s=2,4,6$, with interquartile bands over three
 targets. Dashed segments show $N^{-(s-1)/3}$ reference decay. Here $J=8M^2$ and
 $K=3M$.}
 \label{fig:results-h1-3d}
\end{figure}

In all eight experiments, the median error decreases with $N$ and the
fitted order increases with the spectral decay parameter $s$. These
observations concern approximation in the prescribed feature spaces
with least-squares coefficients over the tested range of resolutions.

\FloatBarrier
\endgroup

\begingroup
\raggedbottom
\section{Concluding remarks}\label{sec:conclusion}

We construct shallow neural networks with fixed Gaussian and
tanh-difference activations by fully deterministic rules. For each
prescribed regularity and resolution, these rules fix the hidden weights,
biases, and common scale independently of the target. Fourier
integrals define the output \mbox{coefficients}. \mbox{Positive} spherical cubature
combined with Gauss-Legendre or equispaced offset nodes gives
$N_M\asymp M^d$ features. A single approximant attains the rate
$N_M^{-(s-m)/d}$ in $H^m(\Omega)$ for all integers $0\le m<s$ without
logarithmic loss. This rate is optimal for worst-case approximation of
the Sobolev unit ball by fixed linear spaces. The proof estimates the
two activation tails separately and controls the real Fourier zeros of
the tanh difference through prescribed spectral exclusions.

\begingroup
\widowpenalty=10000\relax
The experiments on squares and cubes use these feature families with
least-squares output coefficients and node counts chosen separately
from the sufficient theoretical bounds. The $L^2$ comparisons include
both offset rules. The $H^1$ comparisons include both activations and
one offset rule in each dimension. Over the tested resolutions, median
errors decrease with network width and fitted orders increase with the
spectral decay of the targets.\par
\endgroup

The constants in the theoretical node counts are conservative. Sharper
approximation and quadrature estimates for the integrand could reduce
the numbers of directions and offsets needed at finite resolution while
preserving the proved approximation rate. Adaptive node selection could use
explicit criteria derived from the target or approximation residual to
refine the directions and offsets. Such a construction would require error estimates for the
resulting target-dependent spaces and control of the effect of
refinement on offset truncation.

Part II studies condition-number estimates for the continuous Sobolev
Gram matrices of dictionaries with equispaced offsets. Relating these
estimates to sampled least-squares systems and developing preconditioners
for stable coefficient recovery require further work.
\endgroup

\Needspace{4\baselineskip}
\appendix
\section{Estimates for weighted multiplier integrals}\label{app:coefficients}

To derive \eqref{eq:budget-values}, set
\begin{equation}\label{eq:general-budget}
 \begin{gathered}
 \mathcal B_a^2=\eps_M^{-2}\int_0^{2M}
      \tau^a\langle\tau\rangle^{-2s}|\mathfrak r_M(\tau)|^2\dd\tau,\\
 a\in\{2d-2,d-1\},\qquad p=a-2s.
 \end{gathered}
\end{equation}
Thus $\mathcal B_{2d-2}=\Gamma_M$ and
$\mathcal B_{d-1}=\overline\Gamma_M$. The argument applies to any fixed real $p$,
including the critical value $p=-1$.

For the Gaussian activation,
$|\mathfrak r_M(\tau)|^2=\pi^{-1}e^{T_M\tau^2/(2M^2)}$, and hence
\[
 \mathcal B_a^2\le C\frac{M^2}{T_M}
 \left(1+\int_1^{2M}\tau^p e^{T_M\tau^2/(2M^2)}\dd\tau\right).
\]
Split the integral at $H=M/\sqrt{T_M}$. On $[1,H]$, the exponential is
bounded, so the integral is at most
$CH^{p+1}$, $C\log H$, or $C$ according to whether $p>-1$, $p=-1$, or $p<-1$.
On $[H,2M]$, substitute $v=\sqrt{T_M}\tau/M$ and use
\begin{equation}\label{eq:gaussian-endpoint}
 \int_1^V v^p e^{v^2/2}\dd v
 \le C_pV^{p-1}e^{V^2/2},\qquad V\ge2.
\end{equation}
For $v^2\ge2|p-1|$, we have
\[
 \frac{\dd}{\dd v}(v^{p-1}e^{v^2/2})
 =v^pe^{v^2/2}\left(1+\frac{p-1}{v^2}\right)
 \ge\tfrac12v^pe^{v^2/2}.
\]
Integrating above the threshold and absorbing the bounded initial
interval into the constant proves \eqref{eq:gaussian-endpoint}. Thus the
high-frequency integral is at most $CM^{p+1}e^{2T_M}/T_M$.
Since $e^{2T_M}=(M+2)^2$, this bound also controls the low-frequency
contribution for $p\ge-1$, including $\log H$ at $p=-1$.
For $p<-1$, the low-frequency contribution is bounded independently of
the resolution. We obtain
\begin{equation}\label{eq:gaussian-budget-general}
 \begin{aligned}
 \mathcal B_a^2&\le C\frac{M^2}{T_M}
        \left(1+\frac{M^{p+1}e^{2T_M}}{T_M}\right),\\[3pt]
 \mathcal B_a&\le C\left(\frac M{\sqrt{T_M}}
                         +\frac{M^{(p+5)/2}}{T_M}\right).
 \end{aligned}
\end{equation}
Substituting $a=2d-2$ and $a=d-1$ gives the two Gaussian estimates in
\eqref{eq:budget-values}.

For $\sigma_D$, let $L_0=M/T_M=\eps_M^{-1}$. On the scaled frequency interval
$0\le\eta\le2<\pi$, the inverse multiplier is uniformly bounded.
The low-frequency contribution satisfies
\begin{equation}\label{eq:tanh-budget-low}
 \begin{aligned}
 \mathcal B_{a,\mathrm{low}}^2
 &\le CL_0^2\int_0^{2L_0}\tau^a\langle\tau\rangle^{-2s}\dd\tau\\[3pt]
 &\le CL_0^2
 \begin{cases}
 L_0^{p+1},&p>-1,\\
 \log(2+L_0),&p=-1,\\
 1,&p<-1.
 \end{cases}
 \end{aligned}
\end{equation}
On the remaining band $2L_0\le\tau\le2M$, set $\eta=\tau/L_0$.
The formula for $F_D$ implies
\[
 |\mathfrak r_M(L_0\eta)|^2
 \le Ce^{\pi\eta}\frac{|\vartheta_M(\eta)|^2}{\sin^2\eta}.
\]
Integrating over each root cell with its prescribed exclusion gives
\begin{equation}\label{eq:root-cell-integral}
 \int_{C_r}\frac{|\vartheta_M(\eta)|^2}{\sin^2\eta}\dd\eta
 \le C\int_{\delta_r}^{\pi/2}v^{-2}\dd v
 \le C\delta_r^{-1}
 \le CMT_M^{2s-1}r^{-2s}.
\end{equation}
This estimate also holds when the frequency band ends inside a cell. On
$C_r$, $\eta\asymp r$ implies $\eta^p\le C_pr^p$ for either sign of $p$.
Summing over the root cells gives
\begin{equation}\label{eq:tanh-budget-high}
 \begin{aligned}
 \mathcal B_{a,\mathrm{high}}^2
 &\le CL_0^{p+3}\sum_{r=1}^{K^{\rm root}} r^pe^{\pi^2r}\delta_r^{-1}\\
 &\le CL_0^{p+3}MT_M^{2s-1}
                   \sum_{r=1}^{K^{\rm root}} r^{p-2s}e^{\pi^2r}\\
 &\le CL_0^{p+3}MT_M^{2s-1}(K^{\rm root})^{p-2s}e^{\pi^2K^{\rm root}}\\
 &\le C\frac{M^{p+4+2\pi}}{T_M^4}.
 \end{aligned}
\end{equation}
The estimate \eqref{eq:exponential-power-sum} also applies when $p-2s<0$.
The final inequality uses $K^{\rm root}\asymp T_M$ and
$K^{\rm root}\pi\le2T_M+\pi/2$, with total exponent
$-(p+3)+(2s-1)+(p-2s)=-4$ of $T_M$.
Integration over the root cells gives the factor $\delta_r^{-1}$.
Using a supremum bound for the squared reciprocal multiplier instead
would give $\delta_r^{-2}$ and weaken the estimate.

For $p<-1$, \eqref{eq:tanh-budget-low} is bounded by $CM^2/T_M^2$.
For $p>-1$, the exponent $p+3$ of $M$ in the low-frequency contribution
is smaller by $1+2\pi$ than the exponent in \eqref{eq:tanh-budget-high}.
This difference absorbs the factors involving $T_M$, including the
logarithm at $p=-1$. Therefore
\begin{equation}\label{eq:tanh-budget-general}
 \mathcal B_a\le C\left(\frac M{T_M}
                    +\frac{M^{(p+4)/2+\pi}}{T_M^2}\right).
\end{equation}
Substituting the two values of $a$ proves the tanh-difference estimates in
\eqref{eq:budget-values}, including $s=d-1/2$ for $\Gamma_M$ and
$s=d/2$ for $\overline\Gamma_M$.

\Needspace{5\baselineskip}
\section{Spherical kernel estimates}\label{app:rotation}
\begin{proof}[Proof of Lemma~\ref{lem:rotation}]
Write $x=r\theta$, where $0\le r<R$ and $\theta\in\Sph$.
Denote either positive kernel in \cref{lem:rotation} by
$K(r,\omega\cdot\theta)$ and define its mass by
\[
 \mu(r)=\int_{\Sph}K(r,\omega\cdot\theta)\dd S(\omega).
\]
Rotation invariance makes the mass independent of $\theta$. Integrating
the kernel in $\theta$ gives the same mass. The subscripts $G$ and $D$
distinguish the two kernels and their masses.
Weighted Cauchy-Schwarz gives
\[
 \left|\int_{\Sph}|h(\omega)|K(r,\omega\cdot\theta)\dd S(\omega)\right|^2
 \le\mu(r)\int_{\Sph}|h(\omega)|^2K(r,\omega\cdot\theta)\dd S(\omega).
\]
Integrating with respect to $\theta$ yields
\[
 \left\|\int_{\Sph}|h(\omega)|K(r,\omega\cdot\theta)\dd S(\omega)\right\|_{L^2(\Sph)}
 \le\mu(r)\norm{h}_{L^2(\Sph)}.
\]
It remains to estimate $\int_0^R r^{d-1}\mu(r)^2\dd r$.

In the outer region $R/2\le r<R$, set
$\nu=(d-1)/2>0$ and $A=\lambda+(R-r)/\eps$. In the cap coordinate
$u=1-\omega\cdot\theta\in[0,2]$, we have $z_\omega(r\theta)=A+ru/\eps$.
The pushforward of $\dd S$ under $\omega\mapsto u$ is
\begin{equation}\label{eq:cap-coordinate}
 |\mathbb S^{d-2}|[u(2-u)]^{\nu-1}\dd u.
\end{equation}
On the near cap $0<u<1$, the density with respect to $\dd u$ is at most
$Cu^{\nu-1}$.
In this region, $r$ is bounded above and away from zero. The substitution
$v=ru/\eps$ gives
\begin{equation}\label{eq:near-cap}
\begin{aligned}
 \mu_{G,\mathrm{near}}(r)
 &\le C\eps^\nu\int_0^\infty
       v^{\nu-1}(1+A+v)^p e^{-(A+v)^2}\dd v\\
 &\le C\eps^\nu(1+A)^{p-\nu}e^{-A^2}.
\end{aligned}
\end{equation}
For $0\le A\le1$, the integral in the first line is uniformly bounded.
For $A\ge1$, substitute $w=(1+A)v$ and factor out
$(1+A)^{p-\nu}e^{-A^2}$. The remaining integrand is bounded by
$Cw^{\nu-1}(1+w)^{|p|}e^{-w}$, which is integrable for every fixed
$p$. This proves the second line of \eqref{eq:near-cap}.

On the far cap $1\le u\le2$, we have $z_\omega\ge A+r/\eps$.
The density remains integrable at the $(2-u)^{-1/2}$ singularity for
$d=2$. Since $(1+t)^pe^{-t^2}$ is eventually decreasing for every fixed
real $p$ and is positive on compact intervals, we obtain
\begin{align*}
 \mu_{G,\mathrm{far}}(r)
 &\le C(1+A+r/\eps)^p e^{-(A+r/\eps)^2}\\
 &\le C\eps^\nu(1+A)^{p-\nu}e^{-A^2}.
\end{align*}
For the last inequality, $r\ge R/2$ ensures that the factor
$e^{-2Ar/\eps-(r/\eps)^2}$ absorbs the required powers of
$\eps^{-1}$ and $1+A$. Both Gaussian caps therefore satisfy
\eqref{eq:near-cap}. For the tanh-difference kernel, the substitution
$v=ru/\eps$ on the near cap gives
$\int_0^\infty v^{\nu-1}e^{-2v}\dd v$.
On the far cap, the additional factor $e^{-2r/\eps}$ supplies the required
power $\eps^\nu$, so $\mu_D(r)\le C\eps^\nu e^{-2A}$.

For any fixed real $q$, the one-dimensional Gaussian tail satisfies
\begin{equation}\label{eq:gaussian-tail-integral}
 \int_\lambda^\infty(1+A)^q e^{-2A^2}\dd A
 \le C_q(1+\lambda)^{q-1}e^{-2\lambda^2},\qquad \lambda\ge0.
\end{equation}
For $\lambda\ge1$, substituting $z=2A^2$ and using $(1+A)^q\asymp A^q$
reduces the bound to the upper incomplete gamma estimate
\cite[\S8.11(i), equations~(8.11.2) and~(8.11.3)]{OlverLozierBoisvertClark2010}.
The parameter $(q+1)/2$ may be any fixed real number. For the remaining
case $0\le \lambda\le1$, the bound follows from integrability. Squaring
the cap estimates and integrating in $r$, with $\dd r=-\eps\dd A$, yields
\begin{equation}\label{eq:outer-layer-kernel}
\begin{aligned}
 \int_{R/2}^R r^{d-1}\mu_G(r)^2\dd r
 &\le C\eps^{2\nu+1}(1+\lambda)^{2p-2\nu-1}e^{-2\lambda^2}\\
 &=C\eps^d(1+\lambda)^{2p-d}e^{-2\lambda^2},\\[2pt]
 \int_{R/2}^R r^{d-1}\mu_D(r)^2\dd r
 &\le C\eps^{2\nu+1}e^{-4\lambda}=C\eps^de^{-4\lambda}.
\end{aligned}
\end{equation}
On the inner ball $0\le r<R/2$, every direction satisfies
$z_\omega\ge \lambda+R/(2\eps)$. The kernel masses are therefore bounded by
\[
 C(1+\lambda+R/(2\eps))^pe^{-(\lambda+R/(2\eps))^2},
 \qquad Ce^{-2\lambda-R/\eps},
\]
respectively. After squaring, the additional Gaussian decay absorbs the
fixed powers of $\eps^{-1}$ and $1+\lambda$ required for the first bound in
\eqref{eq:outer-layer-kernel}. The additional exponential decay absorbs
$\eps^{-d}$ and gives the second bound. The integrals over the inner ball
therefore satisfy the same bounds as those over the outer region. Applying
the angular operator estimate and taking square roots yields \eqref{eq:rotation}.
\end{proof}

\section{Polynomial approximation with derivatives}\label{app:polynomial}
\begin{proof}[Proof of Lemma~\ref{lem:compatible}]
Truncate the Chebyshev expansion of $g^{(m)}(T_*t)$ at degree $\ell$
and rescale the polynomial to $[-T_*,T_*]$. Denote the result by $r_\ell$.
The truncation estimate
\cite[Theorem~8.2]{Trefethen2013}, with $\rho=e^\eta$, gives
\[
 \norm{g^{(m)}-r_\ell}_{L^\infty([-T_*,T_*])}
 \le\frac{2\mathcal M e^{-\ell\eta}}{e^\eta-1}.
\]
Since $g^{(m)}$ is real on the interval, $r_\ell$ has real coefficients.
For $m=0$, take $p_\ell=r_\ell$.
For $m\ge1$, define
\[
 p_\ell(t)=\sum_{j=0}^{m-1}\frac{g^{(j)}(0)}{j!}t^j
       +\int_0^t\frac{(t-r)^{m-1}}{(m-1)!}r_\ell(r)\dd r.
\]
By construction, $p_\ell^{(m)}=r_\ell$ and
$p_\ell^{(j)}(0)=g^{(j)}(0)$ for $0\le j<m$. Integrating the error in
the highest derivative $m-j$ times yields
\[
 \norm{g^{(j)}-p_\ell^{(j)}}_{L^\infty([-T_*,T_*])}
 \le\frac{T_*^{m-j}}{(m-j)!}\norm{g^{(m)}-r_\ell}_{L^\infty([-T_*,T_*])}
 \qquad(0\le j<m).
\]
The bound holds on both sides of the origin. Since the integral defining
$p_\ell$ is a polynomial of degree at most $\ell+m$, we obtain
\eqref{eq:compatible}.
\end{proof}

\bibliographystyle{amsplain}
\begingroup
\raggedbottom
\interlinepenalty=10000
\Needspace{5\baselineskip}
\bibliography{references}
\endgroup
\end{document}